\documentclass[11pt, reqno]{amsart}
\usepackage{microtype} 
\microtypesetup{protrusion=true, expansion=true}

\usepackage{amsmath, amsfonts, amsthm, amssymb, verbatim,
xcolor, multirow, booktabs, mathdots, bm, amscd, latexsym, mathrsfs, comment}

\usepackage[pdftex, 
            hidelinks,
            pdfencoding=auto,
            psdextra]{hyperref}
\hypersetup{
    colorlinks=true,
    linkcolor=blue, 
    citecolor=blue,
    urlcolor=blue,
    pdftitle={LOCAL CHARACTERIZATIONS OF THE LOCALLY SOLVABLE RADICAL},
    pdfauthor={Cao Minh Nam} 
}

\usepackage{fontenc}
\usepackage[T1]{fontenc}  
\usepackage{lmodern}

\usepackage[shortlabels]{enumitem}
\usepackage[capitalize]{cleveref}
\crefname{equation}{}{}

\usepackage{tikz-cd}

\numberwithin{equation}{section}
\newtheorem{theorem}{Theorem}[section]
\newtheorem{lemma}[theorem]{Lemma}

\newtheorem{proposition}[theorem]{Proposition}
\newtheorem{corollary}[theorem]{Corollary}
\newtheorem*{theorem*}{Theorem}

\newtheorem*{problem}{Problem}

\theoremstyle{definition}

\newtheorem{Remark}[theorem]{Remark}

\newcommand{\LS}{\textup{L}\mathfrak{S}}

\numberwithin{equation}{section}

\title[Locally solvable radicals]{Locally Solvable Radicals via Wilson Radical Sets and Subgroup Lattices}
\author{Cao Minh Nam}

\begin{document}

\begin{abstract} 
For a group $G$, let $S(G)$ be the set of elements $g \in G$ such that $\langle g, x \rangle$ is solvable for all $x \in G$. We study when $S(G)$ coincides with the locally solvable radical $R_{\mathrm{L}\mathfrak{S}}(G)$. Using Wilson's profinitely convergent word sequences, we show that, for every locally (solvable-by-finite) group $G$ and every Wilson sequence $\omega$, $$ R_{\mathrm{L}\mathfrak S}(G) = R_{\mathrm{L}\mathfrak R}(G) = S(G) = \mathcal W_\omega(G). $$ We also obtain four-conjugate and seven-commutator descriptions of radical membership, together with a two-conjugate result for torsion elements of order coprime to $6$. The same radical identity holds for locally linear groups, and hence for subgroups of $\mathrm{GL}_\infty(D)$ when $D$ is a locally finite-dimensional division ring. Independently, we prove that groups with nearly modular subgroup lattice satisfy $$R_{\mathrm{L}\mathfrak S}(G)=R_{\mathrm{L}\mathfrak R}(G)=S(G).$$
\end{abstract}

\maketitle

\section{Introduction}
Many results in group theory recover global structure from subgroups generated by only a few elements. 
Two classical examples concern nilpotence. For Noetherian groups, Baer's theorem \cite{Baer} identifies the Engel elements with the nilpotent radical. 
For finite groups, the Baer--Suzuki theorem \cite{Suzuki} states that, for a prime $p$, a $p$-element $g\in G$ lies in the largest normal $p$-subgroup $O_p(G)$ of $G$ if and only if  $\langle g,g^x\rangle$ is a $p$-group for every $x\in G$.

An analogous result for solvability is Thompson's theorem \cite{Thompson68}:~a finite group is solvable if and only if each of its two-generated subgroups is solvable. Flavell \cite{Flavell95} later gave a short direct proof  and proposed an elementwise analogue for the solvable radical \cite[Conjecture~B]{Flavell01}. Specifically, for a group $G$, let $R(G)$ denote its solvable radical and define
$$S(G) = \left\{ g\in G \mid \langle g,x\rangle \text{ is solvable for every }x\in G \right\}.$$
Flavell conjectured that $$R(G)=S(G)$$ for every finite group $G$.

Guralnick, Kunyavski\u{\i}, Plotkin, and Shalev proved the conjecture in \cite[Theorem~1.1]{Guralnick06}. 
The authors also established analogous results for finite-dimensional Lie algebras over fields of characteristic zero and for linear~groups. 

These  results may not be extended to arbitrary infinite groups. 
As observed in the proof of \cite[Proposition~5.3]{Guralnick06}, a Golod--Shafarevich construction provides an infinite three-generated residually finite group $G$ in which every two-generated subgroup is nilpotent, although $G$ itself is not solvable. 
Consequently, $S(G)=G,$ but $S(G)$ is not solvable. 
Thus, Thompson's criterion fails in this generality, and $S(G)$ need not itself be a solvable radical.

This observation suggests replacing the solvable radical by its local analogue. 
Whenever it exists, the \emph{locally solvable radical} $R_{\mathrm{L}\mathfrak{S}}(G)$ is defined to be the largest normal locally solvable subgroup of $G$. 
For finite groups and linear groups of finite degree, the locally solvable radical coincides with the solvable radical, since every locally solvable group in either class is solvable. 
In the linear case, this follows from Zassenhaus's theorem \cite[Satz~8]{Zassenhaus1938}. 
Moreover, \cite[Theorem~4.4] {Guralnick06} shows that $R_{\mathrm{L}\mathfrak{S}}(G)=S(G)$ for every PI-group $G$, which motivates the following question.

\medskip

\noindent\textbf{Question.}
For which classes of groups $G$ does the locally solvable radical exist and satisfy
  $R_{\mathrm{L}\mathfrak{S}}(G)=S(G)?$

\medskip

In the present paper, we shall investigate this question for several additional classes of groups using two independent methods. 
The first method uses Wilson radical sets. These are defined by profinitely convergent  sequences of words in two variables. 
For finite groups and, more generally, linear groups, eventual vanishing of such a sequence characterizes membership in the solvable radical \cite[Theorem]{Wilson2011}. 
Each such sequence $\omega$ determines a subset $\mathcal{W}_{\omega}(G)$ of an arbitrary group $G$ satisfying $S(G)\subseteq \mathcal{W}_{\omega}(G).$
For every locally (solvable-by-finite) group $G$, we prove that there exists the largest normal locally radical subgroup. 
We denote this subgroup by $R_{\mathrm{L}\mathfrak{R}}(G)$ and call it the \emph{$\mathrm{L}\mathfrak{R}$-radical} of $G$. 
Moreover, for every Wilson sequence $\omega$,
  $$R_{\mathrm{L}\mathfrak{S}}(G) = R_{\mathrm{L}\mathfrak{R}}(G) = S(G) = \mathcal{W}_{\omega}(G).$$
For the same class, we establish local analogues of the finite four-conjugate and seven-commutator criteria, together with a two-conjugate criterion for torsion elements of order coprime to $6$.

The same radical identity also holds for locally linear groups. 
In particular, if $D$ is a locally  finite-dimensional division ring, then every subgroup $G$ of the stable general linear group $\mathrm{GL}_{\infty}(D)$ satisfies
  $$R_{\mathrm{L}\mathfrak{S}}(G) = R_{\mathrm{L}\mathfrak{R}}(G) = S(G) = \mathcal{W}_{\omega}(G).$$

Our second method is lattice-theoretic and does not use Wilson radical sets. 
Combining results on subgroup lattices from \cite{Iwasawa41,Schmidt,Suzuki51} with the theory of nearly modular subgroups introduced by de Giovanni and Musella in \cite{Giovanni}, we prove that every group $G$ with nearly modular subgroup lattice satisfies
  $$R_{\mathrm{L}\mathfrak{S}}(G) = R_{\mathrm{L}\mathfrak{R}}(G) = S(G).$$

The remainder of the paper is organized as follows.  
Section $2$ develops Wilson radical sets and treats locally (solvable-by-finite) groups, establishing the local four-conjugate, seven-commutator, and two-conjugate criteria. 
Section $3$ considers locally linear groups and subgroups of $\mathrm{GL}_{\infty}(D)$. 
Section $4$ provides a lattice-theoretic argument for groups with nearly modular subgroup lattices.

\section{Locally (solvable-by-finite) groups}

In this section, we apply Wilson's characterization of the solvable radical in finite groups to locally (solvable-by-finite) groups.  
For each Wilson sequence $\omega$, we define a set $\mathcal W_\omega(G)$ through the eventual vanishing of the corresponding word values. 
Although this set is a priori dependent on $\omega$, we prove that, for every locally (solvable-by-finite) group $G$, 
$$ R_{\mathrm{L}\mathfrak S}(G) = R_{\mathrm{L}\mathfrak R}(G) = S(G) = \mathcal W_\omega(G).$$ 
Thus, the locally solvable radical is characterized by the condition defining $S(G)$. 
Analogous characterizations in terms of conjugates and commutators are established later in this section. 

\subsection{Wilson radical sets and local radicals} \hfill

\medskip
We begin by recalling the finite result on which the argument rests. 
Let $F(x,y)$ be the free group on two generators $x$ and $y$. 
A sequence $(w_n)$ in $F(x,y)$ is \emph{profinitely convergent} if, for every finite group $E$, there is a positive integer $N$ such~that $w_i(a,b)=w_j(a,b)$ for all $a,b\in E$ and all $i,j\geq N$. 

\begin{proposition}[{\cite[Eq.~(2) and Theorem]{Wilson2011}}]
\label{prop:Wilson-sequence}
There exists a sequence of~words $(w_n(x,y))_{n\geq 1}$ in the free group $F(x,y)$ with the following properties:
\begin{enumerate}[label=\textup{(\arabic*)}]
    \item The sequence $(w_n)_{n\geq 1}$ is profinitely convergent.
    \item For every integer $m\geq 1$, there exists an integer $N=N(m)$ such that
          $$w_n\in \left\langle [x,y]^{F(x,y)}\right\rangle^{(m)}$$ 
          for all $n\geq N$.
    \item For every finite group $G$ and every $g\in G$, one has $g$ belongs to the solvable radical $R(G)$ if and only if, for each $h\in G$, there exists an integer
          $N=N(g,h)$ such~that $w_n(g,h)=1$ for all $n\geq N$.
\end{enumerate}
\end{proposition}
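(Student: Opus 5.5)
This proposition is quoted from Wilson's paper, so the plan is to reconstruct Wilson's construction of the sequence. The design idea is that the word sequence should mimic Engel-type iteration for solvability. Each $w_n$ will be built from iterated commutators of conjugates of $x$ by words in $x,y$. For instance, we can start from $u_1(x,y)=[x,y]$ and define $u_{k+1}$ as a commutator of two suitable conjugates of $u_k$, in the spirit of Thompson--Flavell. Property (2) then comes for free: $u_1\in\langle[x,y]^{F}\rangle$, and each commutator step raises the derived length by one, so $u_k\in\langle [x,y]^F\rangle^{(k-1)}$. The sequence $w_n$ will be taken to be $u_{n!}$, or more precisely a subsequence adapted to $n!$-th powers. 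The point is that in a finite group $E$ the map $(a,b)\mapsto$ (next pair) is a self-map of the finite set $E\times E$. Its iterates are eventually periodic, with period dividing $|E^2|!$, so passing to indices like $n!$ makes the values eventually constant. This gives profinite convergence, property (1).

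For property (3), one direction is easy. If $g\in R(G)$, then $u_1(g,h)=[g,h]$ lies in $R(G)$, and all later terms lie in $R(G)^{(k-1)}$, which is eventually trivial. So $w_n(g,h)=1$ for large $n$. The converse is the substantial part. Suppose $w_n(g,h)=1$ eventually for every $h$, and suppose for contradiction that $g\notin R(G)$. We pass to $\bar G=G/R(G)$, which has trivial solvable radical. The vanishing condition passes to quotients, so $\bar g\neq 1$ in a group with $R(\bar G)=1$. Using the classification-based result of \cite{Guralnick06}, we find $\bar h$ such that $\langle \bar g,\bar h\rangle$ is not solvable. The real task, however, is to choose $\bar h$ so that the iteration never reaches $1$.

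For that, we work in a minimal counterexample and reduce to an almost simple configuration: $\bar g$ acts nontrivially on a nonabelian minimal normal subgroup $T^k$. We then need to show there is $h$ for which the orbit of the dynamical map on pairs avoids $(1,\ast)$. The natural target is a pair $(a,b)$ of elements in the nonabelian simple section that the map sends to a pair conjugate to itself; such a cycle then consists of nontrivial elements. Establishing the existence of such a ``self-reproducing'' pair is the main obstacle. Wilson's approach, following Bandman--Greuel--Grunewald--Kunyavski\u{\i}--Pfister--Plotkin for Engel-like sequences, uses the classification to reduce to minimal simple groups, i.e.\ $\mathrm{PSL}_2(q)$, $\mathrm{Sz}(q)$ and $\mathrm{PSL}_3(3)$. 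For these one checks, via trace maps on $\mathrm{SL}_2$ and explicit computation, that a suitable word map has a fixed point made of nontrivial elements. I would try to define $u_{k+1}$ so that its effect on $\mathrm{SL}_2$ traces is an explicit polynomial map with a fixed point away from the identity locus. Finally, I would lift this fixed point from the minimal simple subgroup inside $\langle\bar g,\bar h\rangle$ back to $G$. This requires showing that $g$ itself, not just some element, can serve as the first coordinate; here we use that we need only a single $h$ for the given $g$, together with a Baer--Suzuki/Flavell-type reduction to $\langle g, g^x\rangle$.
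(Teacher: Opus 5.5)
The paper does not prove this proposition; it quotes it from Wilson. In your reconstruction, (1), (2) and the ``if'' half of (3) are fine, provided the recursion uses fixed conjugating words. Then, for fixed $(a,b)$, the values $u_k(a,b)$ form the orbit of a single self-map of a finite set, and $u_{n!}(a,b)$ stabilizes.

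\textbf{The gap.} It lies in the ``only if'' half of (3), which is the whole content of the statement. By the argument of Lemma~\ref{lem:SsubseteqW}(1), properties (2) and (3) already force $S(G)=R(G)$ for finite $G$. You need more than this: for your \emph{specific} words and the \emph{prescribed} first argument $g\notin R(G)$, some $h$ must keep $w_n(g,h)\neq 1$. Nothing in your sketch delivers this.
\begin{itemize}
\item The words are never specified (``suitable conjugates''), so nothing can be checked.
\item The reduction to minimal simple groups via trace maps is a law-type argument. It produces \emph{some} pair in a minimal simple section $H/K$ whose orbit avoids $1$. But $g$ need not lie in $H$, and trace-map solutions have prescribed traces, so their first coordinate cannot be taken to be the image of $g$.
\item The proposed rescue through $\langle g,g^x\rangle$ fails outright for involutions. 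Two involutions always generate a dihedral, hence solvable, group, so for an involution of a nonabelian simple group this reduction detects nothing. This is why the paper's two-conjugate results require $\gcd(|g|,6)=1$.
\end{itemize}
The step you flag as ``the main obstacle'' cannot be deferred; it is the theorem.

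\textbf{A route that works.} It fixes no explicit recursion at all.
\begin{itemize}
\item Put $D=\langle [x,y]^{F(x,y)}\rangle$, let $\widehat F$ be the profinite completion of $F(x,y)$, and let $P=\bigcap_m\overline{D^{(m)}}$.
\item For finite $G$ and $g,h\in G$, the continuous evaluation $\mathrm{ev}_{g,h}\colon\widehat F\to G$ maps $P$ onto the solvable residual of $\langle [g,h]^{H}\rangle$, where $H=\langle g,h\rangle$. This holds because images of a decreasing chain of compact sets intersect correctly.
\item This residual is trivial if $g\in R(G)$, since then $\langle [g,h]^H\rangle\leq\langle g^H\rangle\leq R(G)$. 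If $g\notin R(G)$, \cite[Theorem~1.1]{Guralnick06} gives $h$ with $H$ nonsolvable. Since $H/\langle [g,h]^H\rangle$ is abelian, the residual is then nontrivial.
\item Hence $M_{G,g}=P\cap\bigcap_{h\in G}\ker\mathrm{ev}_{g,h}$ is a proper open normal subgroup of $P$.
\item Finitely many such subgroups cannot cover $P$. If $N$ is an open normal subgroup of $\widehat F$ with $P\cap N$ contained in each of them, then $P/(P\cap N)\cong PN/N$ is a finite perfect group (a solvable residual). A finite perfect group is not a union of finitely many proper normal subgroups: enlarge them to maximal normal subgroups, note that the corresponding quotient is a direct product of nonabelian simple groups, and take an element with all coordinates nontrivial.
\item By compactness there is $w\in P$ outside every $M_{G,g}$.
\item Since $\widehat F$ is metrizable and $w\in\overline{D^{(n)}}$ for every $n$, choose words $w_n\in D^{(n)}$ converging to $w$. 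Then (1)--(3) hold, with the eventual value of $w_n(g,h)$ equal to $\mathrm{ev}_{g,h}(w)$.
\end{itemize}
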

  
Let $\Omega$ be the set of all sequences of words satisfying conditions \textup{(1)}--\textup{(3)} of Proposition~\ref{prop:Wilson-sequence}.  
For $\omega=(w_n(x,y))_{n\geq 1}\in\Omega$ and for an arbitrary group $G$, define the \emph{Wilson radical set associated with $\omega$}, henceforth
called simply the \emph{Wilson radical set}, by 
$$\mathcal W_\omega(G)=\left\{\,g\in G \mid \forall h\in G,\ \exists N(g,h):\ \forall n\geq N(g,h),\ w_n(g,h)=1 \,\right\}.$$

Wilson also extended Proposition~\ref{prop:Wilson-sequence}(3) to linear groups, drawing on Gruenberg's theorem \cite[Theorem~2]{Gruenberg66} that a finitely generated linear group is solvable if and only if all of its finite quotients are solvable.  We note this consequence for later use.

\begin{theorem}
\label{theo:Wilsonradsetlineargroup}
Let $\mathbb F$ be a field and let $G\leq \mathrm{GL}_n(\mathbb F)$.  Then, for every $\omega\in\Omega$,
$$\mathcal W_\omega(G)=R(G).$$
\end{theorem}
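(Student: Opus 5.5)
We prove the two inclusions $R(G)\subseteq\mathcal W_\omega(G)$ and $\mathcal W_\omega(G)\subseteq R(G)$ separately. Here $R(G)$ exists and is solvable: by Zassenhaus's theorem every solvable subgroup of $\mathrm{GL}_n(\mathbb F)$ has derived length bounded by a function $d(n)$ of $n$ alone. Hence the product of two normal solvable subgroups is again solvable, and the union of the directed family of all normal solvable subgroups is solvable of derived length at most $d(n)$.

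\emph{Inclusion $R(G)\subseteq\mathcal W_\omega(G)$.} Let $g\in R(G)$, $h\in G$, and set $H=\langle g,h\rangle$. Then $[g,h]\in R(G)$, so the normal closure of $[g,h]$ in $H$ is a solvable normal subgroup of $H$, say of derived length $m\le d(n)$. By Proposition~\ref{prop:Wilson-sequence}(2) there is $N=N(m+1)$ with $w_k\in\langle [x,y]^{F(x,y)}\rangle^{(m+1)}$ for $k\ge N$. Substituting $x\mapsto g$, $y\mapsto h$ carries $\langle[x,y]^{F}\rangle$ onto $\langle[g,h]^H\rangle$, and hence carries its $(m+1)$-st derived subgroup into $\langle[g,h]^H\rangle^{(m+1)}=1$. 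Therefore $w_k(g,h)=1$ for all $k\ge N$, and so $g\in\mathcal W_\omega(G)$.

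\emph{Inclusion $\mathcal W_\omega(G)\subseteq R(G)$.} Let $g\in\mathcal W_\omega(G)$. It suffices to show that $\langle g^G\rangle$ is solvable. Since its derived lengths are uniformly bounded, it is enough to show that every finitely generated subgroup of $\langle g^G\rangle$ is solvable, and such a subgroup lies in some $\langle g^{x_1},\dots,g^{x_r}\rangle$. So fix $x_1,\dots,x_r\in G$ and put $K=\langle g,x_1,\dots,x_r\rangle$ and $L=\langle g^K\rangle$, which contains all the $g^{x_i}$.

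The group $K$ is a finitely generated linear group, and Gruenberg's theorem applies to its subgroups. We use it in the following form: if every finite quotient of $K$ has solvable image of $L$ with derived length bounded independently of the quotient, then $L$ is solvable. The justification is that finitely generated linear groups are residually finite (Mal'cev). So if the images of $L$ in all finite quotients have derived length at most $d$, then $L^{(d)}$ lies in every finite-index normal subgroup of $K$, hence $L^{(d)}=1$.

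Now let $\pi\colon K\to E$ be a finite quotient. Its image satisfies $\pi(g)\in R(E)$ by Proposition~\ref{prop:Wilson-sequence}(3): for each $e=\pi(h)\in E$, we have $w_k(\pi g,e)=\pi(w_k(g,h))=1$ for all large $k$. Hence $\pi(L)=\langle\pi(g)^E\rangle\le R(E)$. Moreover $\pi(L)$ is a solvable subgroup of $E$, and it remains to bound its derived length uniformly.

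This uniform bound is the main obstacle. The image $\pi(L)$ is a quotient of $L\le\mathrm{GL}_n(\mathbb F)$, not a linear group of degree $n$, so Zassenhaus's bound does not apply to it directly. There are two ways to handle this.

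\begin{enumerate}
\item Avoid the bound. Each finitely generated subgroup $M=\langle g^{y_1},\dots,g^{y_s}\rangle$ of $L$ is a finitely generated linear group all of whose finite quotients $M/M_0$ are solvable. Indeed, if $M_0$ has finite index in $M$, pass to a finite-index normal subgroup $K_0$ of $K$ with $K_0\cap M\le M_0$, which exists by residual finiteness and subgroup separability considerations. Then $M/M_0$ is a quotient of a subgroup of $\pi(L)\le R(K/K_0)$, and so it is solvable. Gruenberg's theorem then makes $M$ solvable, hence $L$ is locally solvable, hence solvable by Zassenhaus.
\item Use congruence quotients. Choose $K_0$ so that $K/K_0$ is itself linear of degree $n$ over a finite field, which is possible by the usual specialization argument. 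Then Zassenhaus's bound $d(n)$ does apply to $\pi(L)$.
\end{enumerate}

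I would pursue the first route, since it uses exactly Gruenberg's theorem as cited. The delicate point there is producing $K_0$ with $K_0\cap M\le M_0$. I would instead take $K_0$ to be the core in $K$ of a finite-index subgroup of $K$ meeting $M$ inside $M_0$, obtained from separability in finitely generated linear groups. Failing that, I would fall back on the second route.
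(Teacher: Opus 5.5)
Your preferred route (1) has a genuine gap. It needs, for every finite-index $M_0\le M$, a finite-index $K_0\trianglelefteq K$ with $K_0\cap M\le M_0$. In other words, the profinite topology of $K$ must induce the full profinite topology on $M$. Residual finiteness does not give this, and finitely generated linear groups do not have this property in general:
\begin{itemize}
\item They need not be subgroup separable; for example, $F_2\times F_2\le\mathrm{SL}_4(\mathbb Z)$ is not LERF.
\item More to the point, $\mathrm{SL}_3(\mathbb Z)$ has the congruence subgroup property. So on a free subgroup $M$ of rank $2$ it induces only the congruence topology, whose finite images embed in the groups $\mathrm{SL}_3(\mathbb Z/N)$. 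The kernel of an epimorphism $M\twoheadrightarrow A_m$ with $m$ large therefore contains no $K_0\cap M$.
\end{itemize}
The hypothesis $g\in\mathcal W_\omega(G)$ only gives information about finite quotients of $K$. Hence it only controls the quotients $M/(M\cap K_0)\cong MK_0/K_0$ of $M$. Your claim that every finite quotient of $M$ is solvable is therefore unsupported, and Gruenberg's theorem cannot be brought to bear on $M$ this way.

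Your route (2) is correct and should be the proof, once it is phrased with a separating family rather than a single $K_0$.
\begin{itemize}
\item Let $A\subseteq\mathbb F$ be the subring generated by the entries of the generators of $K$ and of their inverses. For each maximal ideal $\mathfrak m$ of $A$ the field $A/\mathfrak m$ is finite. Moreover $\bigcap_{\mathfrak m}\mathfrak m=0$, because $A$ is a finitely generated domain and hence a Jacobson ring.
\item So the reductions $\pi_{\mathfrak m}\colon K\to\mathrm{GL}_n(A/\mathfrak m)$ have trivially intersecting kernels.
\item By Proposition~\ref{prop:Wilson-sequence}\textup{(3)}, each $\pi_{\mathfrak m}(L)$ lies in $R(\pi_{\mathfrak m}(K))$. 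It is thus a solvable subgroup of $\mathrm{GL}_n(A/\mathfrak m)$, of derived length at most $d(n)$ because Zassenhaus's bound does not depend on the field.
\item Hence $L^{(d(n))}\le\bigcap_{\mathfrak m}\ker\pi_{\mathfrak m}=1$.
\end{itemize}
Your residual-finiteness criterion, which quantified over all finite quotients of $K$, must be restricted to this family, since arbitrary finite quotients carry no a priori degree-$n$ bound. The remaining parts are fine: the existence of $R(G)$, the inclusion $R(G)\subseteq\mathcal W_\omega(G)$, and the reduction to local solvability of $\langle g^G\rangle$.

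For comparison, the paper gives no proof here; it attributes the result to Wilson, drawing on Gruenberg's theorem. The natural Gruenberg argument works with two-generator subgroups. For $h\in G$, each finite quotient $E$ of $\langle g,h\rangle$ has $\bar g\in R(E)$ and $E/R(E)$ cyclic, so $\langle g,h\rangle$ is solvable and $g\in S(G)$. One then still needs the linear-group version of $S(G)=R(G)$ due to Guralnick, Kunyavski\u{\i}, Plotkin and Shalev. Your route (2) is essentially the standard proof of Gruenberg's theorem (Mal'cev reduction plus Zassenhaus's uniform bound), run on the normal closure $\langle g^K\rangle$ instead of on $\langle g,h\rangle$. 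It therefore lands directly in $R(G)$ and uses only the finite case of Wilson's theorem.
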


We now state two elementary facts about Wilson radical sets.

\begin{lemma}\label{lem:SsubseteqW}
Let $G$ be a group, and let $\omega\in\Omega.$ Then the following properties~hold:
\begin{enumerate}[label=\textup{(\arabic*)}]
    \item $S(G) \subseteq \mathcal{W}_{\omega}(G)$.
    \item The subgroup generated by $ \mathcal{W}_{\omega}(G)$ is characteristic in $G$.
\end{enumerate}
\end{lemma}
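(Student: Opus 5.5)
For part (1), fix $g\in S(G)$ and an arbitrary $h\in G$, and put $H=\langle g,h\rangle$. By the definition of $S(G)$, $H$ is solvable, say of derived length at most $m$. Consider the homomorphism $\varphi\colon F(x,y)\to H$ with $x\mapsto g$ and $y\mapsto h$. The normal closure $\langle [x,y]^{F(x,y)}\rangle$ maps into $H'=H^{(1)}$, because $\varphi([x,y])=[g,h]\in H'$ and $H'$ is normal in $H$. Derived series are functorial, so $\varphi$ carries $\langle [x,y]^{F(x,y)}\rangle^{(m)}$ into $H^{(m+1)}$, which is trivial. Property (2) of Proposition~\ref{prop:Wilson-sequence} gives an $N=N(m)$ with $w_n\in\langle [x,y]^{F(x,y)}\rangle^{(m)}$ for all $n\ge N$. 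For these $n$ we get $w_n(g,h)=\varphi(w_n)\in H^{(m+1)}=1$. Since $h$ was arbitrary, $g\in\mathcal W_\omega(G)$.

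The only point needing care is the index shift between $H^{(m)}$ and $H^{(m+1)}$. We avoid it by simply choosing $m$ to be at least the derived length of $H$. Profinite convergence and property (3) are not needed here.

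For part (2), it suffices to show that $\mathcal W_\omega(G)$ is setwise invariant under every automorphism $\alpha$ of $G$. The generated subgroup is then invariant as well. Let $g\in\mathcal W_\omega(G)$ and $h\in G$. Put $h'=\alpha^{-1}(h)$. Word values commute with homomorphisms, so $w_n(\alpha(g),h)=\alpha\bigl(w_n(g,h')\bigr)$. The right-hand side equals $1$ for all $n\ge N(g,h')$. Taking $N(\alpha(g),h):=N(g,h')$ shows that $\alpha(g)\in\mathcal W_\omega(G)$. Hence $\alpha(\mathcal W_\omega(G))\subseteq\mathcal W_\omega(G)$. Applying the same argument to $\alpha^{-1}$ gives equality.

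Neither part presents a real obstacle. The one step that needs attention is verifying in (1) that the image of the normal closure of $[x,y]$ lies in $H'$, so that the derived terms line up.
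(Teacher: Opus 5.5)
Your proposal is correct and follows essentially the same route as the paper. In (1) you push Wilson's property (2) through the evaluation homomorphism $F(x,y)\to\langle g,h\rangle$; the paper bounds by the derived length of $\langle [g,h]^H\rangle$ rather than of $H$, which is an immaterial difference, and your explicit check that the image of the normal closure of $[x,y]$ lies in $H'$ makes precise a step the paper leaves implicit. In (2) you pull back the second variable along the automorphism exactly as the paper does.
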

    
\begin{proof} (1) 
Fix $g\in S(G)$, let $h\in G$ be arbitrary, and put $H=\langle g,h\rangle$. 
Since $[g,h]\in\langle g^H\rangle$ and $\langle g^H\rangle\trianglelefteq H$, it follows that
  $$\bigl\langle [g,h]^H\bigr\rangle
  \leq
  \langle g^H\rangle.$$
Moreover, the solvability of $H$ implies that the closure $\bigl\langle [g,h]^H \bigr\rangle$ has derived length $m \geq 0$. 
Write $\omega=(w_n(x,y))_{n\geq 1}$. 
Since   $\bigl\langle [g,h]^H\bigr\rangle^{(m)}=1,$ Proposition~\ref{prop:Wilson-sequence}\textup{(2)} provides an integer $N=N(m)$ such that $w_n(g,h)=1$ for every $n\geq N$. 
This forces $ g \in \mathcal{W}_{\omega}(G) $. 
Consequently  $S(G) \subseteq \mathcal{W}_{\omega}(G).$

    \medskip

        (2) Fix an automorphism $ \varphi $ of $ G $. 
        It suffices to verify the inclusion $$\varphi\bigl(\mathcal{W}_{\omega}(G)\bigr) \subseteq \mathcal{W}_{\omega}(G).$$
 For any $a \in G$, the surjectivity of $\varphi$ allows us to write $a = \varphi(h)$ for some $h \in G$. 
 For $g\in\mathcal W_\omega(G)$, there exists an integer $N=N(g,h)$ such~that
  $$w_n(g,h)=1$$ whenever $n\geq N$.
By functoriality of word values, for every $n\geq N$, we~have $$ w_n(\varphi(g),a) = w_n(\varphi(g),\varphi(h)) = \varphi\bigl(w_n(g,h)\bigr) = 1. $$  
Therefore $\varphi(g)\in\mathcal W_\omega(G)$. The proof is now complete.
\end{proof}

\begin{Remark} \label{rm:24}
It follows from Theorem~\ref{theo:Wilsonradsetlineargroup} that, for every linear group $G$, the set $\mathcal W_\omega(G)$ is independent of the choice of $\omega\in\Omega$.
\end{Remark}

An \emph{ascending series} of a group $G$ is a family of subgroups $(G_\alpha)_{\alpha\leq\gamma}$, indexed by an ordinal $\gamma$, satisfying the following four conditions:
\begin{enumerate}
\item[(i)] $G_0=1$ and $G_\gamma=G$;
\item[(ii)] $G_\alpha\leq G_\beta$ whenever $\alpha\leq\beta\leq\gamma$;
\item[(iii)] $G_\alpha\trianglelefteq G_{\alpha+1}$ for every $\alpha<\gamma$;
\item[(iv)] $G_\lambda=\bigcup_{\alpha<\lambda}G_\alpha$ for every limit ordinal $\lambda\leq\gamma$.
\end{enumerate}
The subgroups $G_\alpha$ are
called the \emph{terms} of the series, and the quotient groups
$G_{\alpha+1}/G_\alpha,
\,
\alpha<\gamma,$
are called its \emph{factors}. A subgroup $A\leq G$ is \emph{ascendant} in $G$ if it occurs as a term of some ascending series of $G$.

Recall that a group is \emph{radical} if it has an ascending series with  locally nilpotent factors. For locally (solvable-by-finite) groups, local radicality and local solvability coincide by \cite[Theorem~2.6]{CNH2025}.

\begin{lemma} \label{lem:locradislocsolonlocsolbyfinite} Let $G$ be a locally \textup{(}solvable-by-finite\textup{)} group. Then $G$ is locally radical if and only if it is locally solvable. 
\end{lemma}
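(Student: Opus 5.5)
The proof will establish the two implications separately, with all the work on the side ``locally radical implies locally solvable''. Since both properties are local, it suffices to show that a finitely generated subgroup $H$ of $G$ is radical if and only if it is solvable. The hypothesis makes every such $H$ solvable-by-finite: there is a solvable normal subgroup $N\trianglelefteq H$ with $H/N$ finite.

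\emph{Locally solvable $\Rightarrow$ locally radical.} A solvable group $H$ has a derived series
$$1=H^{(d)}\trianglelefteq H^{(d-1)}\trianglelefteq\cdots\trianglelefteq H^{(0)}=H$$
with abelian factors. Such a series is a finite ascending series whose factors are abelian, hence locally nilpotent, so $H$ is radical. Applying this to each finitely generated subgroup shows that a locally solvable group is locally radical.

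\emph{Locally radical $\Rightarrow$ locally solvable.} Let $H\le G$ be finitely generated, so that $H$ is radical, and let $N\trianglelefteq H$ be solvable with $H/N$ finite. Two standard facts about radical groups will be used: the class of radical groups is closed under homomorphic images, and a finite radical group is solvable. The second holds because the terms of an ascending series of a finite group stabilise, so the series can be refined to a finite series whose factors are finite locally nilpotent groups, i.e.\ finite nilpotent groups; a group with such a series is solvable. By the first fact, $H/N$ is a finite radical group, hence solvable by the second. Then $H$ is an extension of the solvable group $N$ by the solvable group $H/N$, so $H$ is solvable.

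\emph{Main obstacle.} The only point needing care is that the class of radical groups is closed under homomorphic images. For a radical group $H$ with ascending series $(H_\alpha)_{\alpha\le\gamma}$ and a normal subgroup $N$, I would take the images $(H_\alpha N/N)_{\alpha\le\gamma}$ in $H/N$. This family satisfies conditions (i)--(iv): normality and the union condition at limit ordinals pass to images directly. Each factor $(H_{\alpha+1}N/N)/(H_\alpha N/N)$ is a quotient of $H_{\alpha+1}/H_\alpha$, and quotients of locally nilpotent groups are locally nilpotent. So $H/N$ is radical. Alternatively, the whole statement may be cited directly from \cite[Theorem~2.6]{CNH2025}, as the text preceding the lemma indicates.
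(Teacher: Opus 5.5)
Your argument is correct. It differs from the paper in that the paper gives no argument for this lemma at all: it imports the statement from \cite[Theorem~2.6]{CNH2025}, which is the fallback you mention at the end. Your proposal makes the lemma self-contained. Its only inputs are closure of the radical class under subgroups and quotients, and the fact that finite radical groups are solvable; from these, ``radical and solvable-by-finite implies solvable'' is immediate.

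Two points deserve one more sentence each. First, reading ``locally radical'' as ``every finitely generated subgroup is radical'' uses subgroup-closure of radical groups. This follows by intersecting an ascending series with the subgroup, since the new factors embed in the old ones, and you should state it alongside quotient-closure. Second, in the finite case the series is not really ``refined''. Rather, its distinct terms form a finite chain, and by the union condition at limit ordinals each new term first appears at a successor ordinal $\delta+1$, with $H_\delta$ equal to the preceding distinct term. Hence consecutive distinct terms are normal in one another with finite locally nilpotent, i.e.\ nilpotent, quotients, and the group is solvable.

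The citation buys brevity and ties the lemma to the same source the paper later uses for $R_{\mathrm{L}\mathfrak{R}}(N)\le R(N)$ in the lifting theorem of Section~4. Your route buys transparency about exactly why the equivalence holds.
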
 

We can now state the main identification for this class. 

\begin{theorem} \label{theo:locally-finite} 
Every locally \textup{(}solvable-by-finite\textup{)} group $G$ admits
the locally solvable radical.
Moreover, for each $\omega\in\Omega$,
  $$R_{\mathrm{L}\mathfrak{S}}(G)
  =
  R_{\mathrm{L}\mathfrak{R}}(G)
  =
  S(G)
  =
  \mathcal{W}_{\omega}(G).$$
\end{theorem}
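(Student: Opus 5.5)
The plan is to prove the chain of inclusions
$$R_{\mathrm{L}\mathfrak{S}}(G)\subseteq S(G)\subseteq \mathcal W_\omega(G)\subseteq R_{\mathrm{L}\mathfrak{S}}(G),$$
after first establishing that the radicals exist. The middle inclusion is Lemma~\ref{lem:SsubseteqW}(1).

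\emph{Existence.} I would show that the product of two normal locally solvable subgroups $A,B$ of $G$ is again locally solvable. Take a finitely generated $H\le AB$. Then $H\le \langle a_1,\dots,a_k,b_1,\dots,b_k\rangle=K$ with $a_i\in A$ and $b_i\in B$. Since $G$ is locally (solvable-by-finite), $K$ has a solvable normal subgroup $M$ of finite index. The group $K/M$ is finite and equals $(A\cap K)M/M\cdot(B\cap K)M/M$, a product of two normal subgroups. I would argue that these normal subgroups are solvable. Indeed, $(A\cap K)M/M$ is a locally solvable finite group, being an image of $A\cap K$, and is therefore solvable. So $K/M$ is solvable, hence $K$ is solvable. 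Local solvability is also preserved under directed unions, so by Zorn's lemma the subgroup generated by all normal locally solvable subgroups is the largest one, $R_{\mathrm{L}\mathfrak{S}}(G)$. Lemma~\ref{lem:locradislocsolonlocsolbyfinite} says locally radical equals locally solvable for subgroups of $G$, since these are again locally (solvable-by-finite). Hence the same subgroup is the largest normal locally radical subgroup, and $R_{\mathrm{L}\mathfrak{R}}(G)=R_{\mathrm{L}\mathfrak{S}}(G)$.

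\emph{First inclusion.} Let $g\in R=R_{\mathrm{L}\mathfrak{S}}(G)$ and $x\in G$. Put $H=\langle g,x\rangle$. This $H$ is solvable-by-finite: let $M\trianglelefteq H$ be solvable of finite index. Then $\langle g^H\rangle\le R\cap H$ is normal in $H$ and locally solvable. Its image in the finite group $H/M$ is locally solvable and finite, hence solvable, so $\langle g^H\rangle M/M$ is solvable. Also $H/\langle g^H\rangle$ is cyclic, generated by the image of $x$. Therefore $H/M$ is solvable-by-cyclic, hence solvable, and $H$ is solvable. So $g\in S(G)$.

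\emph{Last inclusion (main obstacle).} Let $g\in\mathcal W_\omega(G)$. The first task is to show that $\langle g^G\rangle$ is locally solvable. Since $\mathcal W_\omega(G)$ is closed under automorphisms, in particular conjugation (Lemma~\ref{lem:SsubseteqW}(2)), it suffices to show the following: for finitely many conjugates $g_1,\dots,g_k$ of $g$, the group $L=\langle g_1,\dots,g_k\rangle$ is solvable.

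\begin{enumerate}[label=\textup{(\alph*)}]
\item Pass to $K=\langle g,t_1,\dots,t_k\rangle$, where the $t_i$ are the conjugators. Note that $g\in\mathcal W_\omega(K)$, because the defining condition only involves pairs $(g,h)$ with $h\in K$.
\item Let $M\trianglelefteq K$ be solvable of finite index, and consider the finite group $\bar K=K/M$. For every $\bar h\in\bar K$, lift it to $h\in K$. Then $w_n(\bar g,\bar h)$ is the image of $w_n(g,h)$, which equals $1$ for $n\ge N(g,h)$.
\item By Proposition~\ref{prop:Wilson-sequence}(3), $\bar g\in R(\bar K)$. Hence $\langle g^K\rangle M/M$ is solvable, and so $\langle g^K\rangle$ is solvable, being an extension of a subgroup of $M$ by a solvable group.
\end{enumerate}

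Since $L\le\langle g^K\rangle$, $L$ is solvable. Thus $\langle g^G\rangle$, being a directed union of such $L$, is locally solvable and normal, so $g\in R_{\mathrm{L}\mathfrak{S}}(G)$. The delicate point is exactly the transfer in (b): vanishing of $w_n(g,h)$ in $K$ must pass to all pairs in $K/M$. Surjectivity of the quotient map handles this, since eventual vanishing is inherited by images, with $N$ depending on the lift.
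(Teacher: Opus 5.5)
Your proof is correct and follows the same chain
$R_{\mathrm{L}\mathfrak S}(G)\subseteq S(G)\subseteq\mathcal W_\omega(G)\subseteq R_{\mathrm{L}\mathfrak S}(G)$
as the paper. In both, the decisive step is passing to a finite quotient of a finitely generated subgroup and applying Proposition~\ref{prop:Wilson-sequence}(3). The differences lie in the auxiliary steps.

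\textbf{Existence and the first inclusion.} The paper cites \cite[Lemma~4.6]{CNH2025} for the existence of $R_{\mathrm{L}\mathfrak S}(G)$. It cites \cite[Lemma~2.5]{CNH2025} for the solvability of $\langle g,x\rangle\cap R_{\mathrm{L}\mathfrak S}(G)$. You prove both directly, using only two facts:
\begin{enumerate}
\item every finitely generated subgroup has a solvable normal subgroup $M$ of finite index;
\item a finite image of a locally solvable group is solvable.
\end{enumerate}
Your product-of-two-normal-subgroups argument for existence is sound. It uses $K=(A\cap K)(B\cap K)$, which holds because both factors are normal in $K$ and together contain the generators. Your solvable-by-cyclic argument for $\langle g,x\rangle$ is also sound. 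This makes your argument self-contained, at the price of some length.

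\textbf{The last inclusion.} The paper shows that $\langle\mathcal W_\omega(G)\rangle$ itself is locally solvable: all generators of a finitely generated $L$ map into $R(L/A)$. It then invokes Lemma~\ref{lem:SsubseteqW}(2) to get normality. You instead work with one element $g$ at a time. You enlarge to $K=\langle g,t_1,\dots,t_k\rangle$ so that the normal closure $\langle g^K\rangle$ is solvable. Normality then comes for free from $\langle g^G\rangle$. Consequently your opening appeal to closure of $\mathcal W_\omega(G)$ under conjugation is never actually used. You only need $g\in\mathcal W_\omega(K)$ and the fact that $\langle g^G\rangle$ is generated by conjugates of $g$.
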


\begin{proof}
By \cite[Lemma~4.6]{CNH2025}, the locally solvable radical $R_{\LS}(G)$ exists as the subgroup generated by all locally solvable normal subgroups of $G$. In view of Lemma~\ref{lem:locradislocsolonlocsolbyfinite}, it coincides with the $\textup{L}\mathfrak{R}$-radical $R_{\textup{L}\mathfrak{R}}(G)$. 

Next we claim that $R_{\LS}(G) \subseteq S(G)$. Indeed,
let $g \in R_{\LS}(G)$ and $x \in G$.  Observe that
$\langle g,x\rangle
\big/
\bigl(\langle g,x\rangle\cap R_{\LS}(G)\bigr)$
is cyclic. According to \cite[Lemma 2.5]{CNH2025}, the intersection $\langle g, x \rangle \cap R_{\LS}(G)$ is solvable.  This forces  the subgroup $ \langle g, x \rangle $ to be solvable, and hence the inclusion $ R_{\LS}(G) \subseteq S(G) $ holds.   Combining this inclusion with
Lemma~\ref{lem:SsubseteqW}, we obtain
$R_{\LS}(G)
\subseteq
S(G)
\subseteq
\mathcal{W}_{\omega}(G).$

Therefore, to complete the proof, it remains to verify $\mathcal{W}_{\omega}(G) \subseteq R_{\LS}(G).$
Put
$M=\bigl\langle \mathcal W_\omega(G)\bigr\rangle.$
We shall show that $M$ is locally solvable. Consider elements $g_1,\ldots,g_m\in\mathcal W_\omega(G)$ and put
$L=\langle g_1,\ldots,g_m\rangle.$ Since $G$ is locally (solvable-by-finite), the group $L$ is solvable-by-finite. Hence there exists a solvable normal subgroup $A\trianglelefteq L$ such that $Q=L/A$ is finite. Let $\pi\colon L\twoheadrightarrow Q$ be the canonical epimorphism. We claim that $\pi(g_i)\in R(Q)$ for every $i=1,\ldots,m$. Fix $i$ and let $q\in Q$. Choose $\ell\in L$ such that $\pi(\ell)=q$. Since $g_i\in\mathcal W_\omega(G)$, the definition of the Wilson radical set provides an integer $N_i=N(g_i,\ell)$ such that $w_n(g_i,\ell)=1$ for every $n\geq N_i$. Therefore, $$w_n\bigl(\pi(g_i),q\bigr) = w_n\bigl(\pi(g_i),\pi(\ell)\bigr) = \pi\bigl(w_n(g_i,\ell)\bigr) = 1$$ for every $n\geq N_i$. 
Thus $\pi(g_i)\in\mathcal W_\omega(Q).$ 
On the other hand, since $Q$ is finite, it follows from Proposition~\ref{prop:Wilson-sequence}\textup{(3)} that $\pi(g_i)\in R(Q).$

Now $Q$ is generated by $\pi(g_1),\ldots,\pi(g_m)$, all of which belong to $R(Q)$. Hence $Q=R(Q),$ so $Q$ is solvable. Since $A$ is solvable, it follows that $L$ is solvable. Therefore $M$ is locally solvable. Moreover, 
$M$ is characteristic in $G$ by Lemma~\ref{lem:SsubseteqW}. Thus $M$ is a normal locally solvable subgroup of $G$, and so $M\leq R_{\LS}(G).$ In particular, $\mathcal W_\omega(G)\subseteq R_{\LS}(G),$ as required.
\end{proof}

The preceding result has two immediate consequences. 

\begin{corollary}\label{cor:locally_finite_2gen}
Let $G$ be a locally \textup{(\emph{solvable-by-finite})} group. Then:
\begin{enumerate}[label = \textup{(\arabic*)}]
    \item $G$ is locally solvable if and only if every two-generated subgroup of $G$ is solvable.
    \item If in each conjugacy class of $G$ every two elements generate a solvable group, then $G$ is locally solvable.
\end{enumerate}    
\end{corollary}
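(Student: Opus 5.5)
Part (1) will follow directly from Theorem~\ref{theo:locally-finite}. If $G$ is locally solvable, then every finitely generated subgroup, and in particular every two-generated subgroup, is solvable. Conversely, suppose every two-generated subgroup of $G$ is solvable. Then $\langle g,x\rangle$ is solvable for all $g,x\in G$, so $S(G)=G$. Theorem~\ref{theo:locally-finite} gives $S(G)=R_{\LS}(G)$, hence $G=R_{\LS}(G)$ is locally solvable.

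For part (2), I will use the Wilson radical set for some fixed $\omega\in\Omega$ and reduce to finite quotients, repeating the argument of Theorem~\ref{theo:locally-finite}. Since that theorem gives $\mathcal W_\omega(G)=R_{\LS}(G)$, it suffices to show that $L=\langle g_1,\dots,g_m\rangle$ is solvable for arbitrary $g_1,\dots,g_m\in G$. As $L$ is solvable-by-finite, choose a solvable $A\trianglelefteq L$ with $Q=L/A$ finite. The hypothesis passes to $Q$: two conjugate elements of $Q$ can be written as $\pi(y)$ and $\pi(y)^{\pi(\ell)}=\pi(y^\ell)$ with $y,\ell\in L$. The hypothesis applied in $G$ makes $\langle y,y^\ell\rangle$ solvable, and so its image $\langle \pi(y),\pi(y)^{\pi(\ell)}\rangle$ is solvable. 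Thus $Q$ is a finite group in which any two conjugate elements generate a solvable subgroup.

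The key step is concluding that such a finite $Q$ is solvable, and this is the main obstacle, since Theorem~\ref{theo:locally-finite} alone does not cover conjugates. I will invoke the finite result behind the criteria mentioned in the introduction: the theorem of Guralnick, Kunyavski\u{\i}, Plotkin and Shalev (and its refinement by Gordeev--Grunewald--Kunyavski\u{\i}--Plotkin) that in a finite group, if $\langle y,y^q\rangle$ is solvable for all $q$, then $y$ lies in $R(Q)$, possibly requiring four conjugates in general. Under our hypothesis any number of conjugates from a single class pairwise generate solvable groups. This does not immediately give that several conjugates generate a solvable group, so I expect to need the two-conjugate version for elements of prime order $\ge5$ together with a separate argument for $2$- and $3$-elements.

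My first attempt will be a minimal counterexample argument. Take $Q$ a minimal nonsolvable finite group with the property, so that $Q/\Phi(Q)$ is a minimal simple group, and use Thompson's list of minimal simple groups to exhibit, in each case, two conjugate elements generating a nonsolvable subgroup. For example, $\mathrm{PSL}_2(q)$ is generated by two conjugate elements, and one checks the remaining cases on the list similarly. Once every $\pi(g_i)$, and indeed all of $Q$, is shown to be solvable, $L$ is solvable-by-solvable, hence solvable. Therefore $G$ is locally solvable.
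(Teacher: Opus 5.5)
Your proposal is correct and follows the paper's route. Part (1) is read off from $S(G)=R_{\LS}(G)$ in Theorem~\ref{theo:locally-finite}, and part (2) passes from finitely generated solvable-by-finite subgroups to finite quotients and applies the finite two-conjugates theorem, which the paper imports through \cite[Corollary~2.10]{CNH2025}; the Wilson-set detour and the two-conjugate criterion you attribute to Guralnick--Kunyavski\u{\i}--Plotkin--Shalev (false for involutions) play no role and can be dropped. Your Thompson-list verification is left unchecked, though the claim is true, and it can be avoided entirely: applying Theorem~\ref{prop:finite-prime} to $Q/R(Q)$ shows it has no elements of prime order $p>3$, so it is a $\{2,3\}$-group, hence solvable by Burnside's $p^aq^b$-theorem and therefore trivial.
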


\begin{proof}
Part (1) follows immediately from Theorem~\ref{theo:locally-finite}. For part \textup{(2)},   fix an element $g \in G$, and let $K \le \langle g^G \rangle$ be an arbitrary finitely generated subgroup. Then $K$ is solvable-by-finite. Note that in every subgroup of $G$, any two conjugates generate a solvable subgroup.  By \cite[Corollary~2.10]{CNH2025}, the subgroup $K$ is solvable. This implies that the normal closure $\langle g^G \rangle$ is locally solvable, and hence $g \in R_{\textup{L}\mathfrak{S}}(G)$. Therefore $G$ coincides with its locally solvable radical, and so $G$ is locally solvable.
\end{proof}

\begin{corollary}\label{cor:Wilsonradseteqradset}
Let $G$ be a locally \textup{(\emph{solvable-by-finite})} group. Then the set $\mathcal{W}_{\omega}(G)$ is independent of the choice of
$\omega\in\Omega$; equivalently,
  $$\mathcal{W}_{\omega}(G) = \mathcal{W}_{\omega'}(G)$$
for all $\omega,\omega'\in\Omega$.
\end{corollary}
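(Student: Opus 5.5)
This is an immediate consequence of Theorem~\ref{theo:locally-finite}. The plan is to observe that the right-hand side of the displayed identity there makes no reference to $\omega$. Fix a locally (solvable-by-finite) group $G$ and two sequences $\omega,\omega'\in\Omega$.

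Applying Theorem~\ref{theo:locally-finite} once with $\omega$ and once with $\omega'$ yields
$$\mathcal W_\omega(G)=S(G)=\mathcal W_{\omega'}(G).$$
Here $S(G)$ is defined purely in terms of solvability of two-generated subgroups, so it is independent of any word sequence. One could equally route the argument through $R_{\LS}(G)$, which is likewise intrinsic to $G$. Its existence is guaranteed by the first assertion of Theorem~\ref{theo:locally-finite}, so the equality is meaningful.

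The only point needing any care is a quantifier check. Theorem~\ref{theo:locally-finite} asserts the identity for \emph{each} $\omega\in\Omega$, and the intermediate terms $R_{\LS}(G)$, $R_{\mathrm{L}\mathfrak R}(G)$ and $S(G)$ do not vary with $\omega$. Once this is confirmed, there is no genuine obstacle, and the corollary follows by transitivity of equality, exactly parallel to Remark~\ref{rm:24} for linear groups.
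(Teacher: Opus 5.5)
Your proposal is correct and matches the paper, which states this corollary as an immediate consequence of Theorem~\ref{theo:locally-finite} without further argument. The key observation is the same as yours: $\mathcal W_\omega(G)=S(G)$ holds for every $\omega\in\Omega$, and $S(G)$ does not depend on $\omega$.
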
 

To study normal closures of ascendant locally solvable subgroups, we introduce the following terminology. A group $G$ is called \emph{$R_{\mathrm{L}\mathfrak S}$-hereditary} if $R_{\mathrm{L}\mathfrak S}(H)$ exists for every subgroup $H\leq G$; equivalently, every subgroup of $G$ has the largest normal locally solvable
subgroup.

\begin{lemma}\label{lem:ascenlocsolsubgroupislocsol}
Let $G$ be an $R_{\textup{L}\mathfrak{S}}$-hereditary group. If $A$ is a locally solvable ascendant subgroup of $G$, then the normal closure $\langle A^G \rangle$ is locally solvable.
\end{lemma}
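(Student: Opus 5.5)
Fix an ascending series $(G_\alpha)_{\alpha\leq\gamma}$ of $G$ in which $A=G_\beta$ for some $\beta\leq\gamma$. The idea is to push $A$ up the series one term at a time, using the radical of each term. For every $\alpha$ with $\beta\leq\alpha\leq\gamma$, put
$$R_\alpha=R_{\mathrm{L}\mathfrak S}(G_\alpha),$$
which exists because $G$ is $R_{\mathrm{L}\mathfrak S}$-hereditary. I will prove by transfinite induction on $\alpha\geq\beta$ that $A\leq R_\alpha$. Taking $\alpha=\gamma$ gives $A\leq R_{\mathrm{L}\mathfrak S}(G)$. Since $R_{\mathrm{L}\mathfrak S}(G)$ is normal in $G$, this yields $\langle A^G\rangle\leq R_{\mathrm{L}\mathfrak S}(G)$. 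Local solvability passes to subgroups, so $\langle A^G\rangle$ is locally solvable.

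\emph{Base case} $\alpha=\beta$. Here $A=G_\beta$ is itself a normal locally solvable subgroup of $G_\beta$, so $A\leq R_\beta$.

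\emph{Successor step.} Assume $A\leq R_\alpha$. Because $G_\alpha\trianglelefteq G_{\alpha+1}$, conjugation by any $x\in G_{\alpha+1}$ restricts to an automorphism of $G_\alpha$. The largest normal locally solvable subgroup is characteristic, so $R_\alpha$ is characteristic in $G_\alpha$ and hence normal in $G_{\alpha+1}$. It is locally solvable, so $R_\alpha\leq R_{\alpha+1}$, and therefore $A\leq R_{\alpha+1}$.

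\emph{Limit step.} Let $\lambda$ be a limit ordinal with $A\leq R_\alpha$ for all $\beta\leq\alpha<\lambda$. I will show that $U=\bigcup_{\beta\leq\alpha<\lambda}R_\alpha$ is a normal locally solvable subgroup of $G_\lambda$, which gives $A\leq U\leq R_\lambda$.
\begin{itemize}
\item Iterating the successor argument, $R_\alpha$ is normal in $G_\delta$ for every $\delta>\alpha$ up to $\lambda$, so $R_\alpha\leq R_\delta$. Hence the $R_\alpha$ form an ascending chain and $U$ is a subgroup.
\item Any finitely generated subgroup of $U$ lies in a single $R_\alpha$, so it is solvable; thus $U$ is locally solvable.
\item Every $x\in G_\lambda=\bigcup_{\alpha<\lambda}G_\alpha$ lies in some $G_\delta$. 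It then normalizes $R_\alpha$ for every $\alpha$ with $\delta\leq\alpha<\lambda$, and these $R_\alpha$ are cofinal in the chain. Hence $U$ is normal in $G_\lambda$.
\end{itemize}

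The step that needs the most care is showing that $R_\alpha\leq R_\delta$ for every $\alpha<\delta$, not merely for $\delta=\alpha+1$. This monotonicity is itself a transfinite induction on $\delta$, nested inside the main one. At successor stages it uses the characteristic-subgroup argument above. At limit stages $\mu$ it uses the same union construction: the union $\bigcup_{\alpha\leq\eta<\mu}R_\eta$ is a normal locally solvable subgroup of $G_\mu$, so $R_\alpha\leq R_\mu$. I would therefore prove this monotonicity statement first, and then derive the main induction from it.
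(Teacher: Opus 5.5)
Your argument is correct, but it follows a different route from the paper's. The paper never uses the radicals of the terms of the series. Writing the series as $A=A_0\le\cdots\le A_\gamma=G$, it puts $B_\alpha=\langle A^{A_\alpha}\rangle$ and proves each $B_\alpha$ locally solvable. Its limit step is free, since normal closures grow automatically and $B_\lambda$ is a directed union. Its successor step notes that $B_{\alpha+1}\le A_\alpha$ is generated by the conjugates $B_\alpha^x$ ($x\in A_{\alpha+1}$), each a normal locally solvable subgroup of $B_{\alpha+1}$. It then invokes the existence of $R_{\mathrm{L}\mathfrak S}(B_{\alpha+1})$, the radical of an auxiliary subgroup that is usually not a term of the series.

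You instead follow $R_\alpha=R_{\mathrm{L}\mathfrak S}(G_\alpha)$. Your successor step is just ``characteristic in a normal subgroup.'' But radicals, unlike normal closures, are not obviously monotone, so you pay with the extra transfinite induction for $R_\alpha\le R_\delta$. This induction is not genuinely nested: since $R_\beta=A$, the single statement ``$R_\alpha\le R_\delta$ for all $\beta\le\alpha\le\delta\le\gamma$'' already contains your main induction. In exchange, you need the radical only for the terms $G_\alpha$, $\alpha\ge\beta$, of one fixed series. You also obtain the slightly more informative conclusion that these radicals form an ascending chain ending in $R_{\mathrm{L}\mathfrak S}(G)\ge A$.

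One slip to fix: the first bullet of your limit step asserts that $R_\alpha$ is normal in $G_\delta$ for every $\delta>\alpha$, which is false in general. In the series $1\trianglelefteq\langle(12)(34)\rangle\trianglelefteq V_4\trianglelefteq A_4$, the term $\langle(12)(34)\rangle$ is its own radical but is not normal in $A_4$. What the successor argument and the union construction together give is $R_\alpha\le R_\delta$, proved by induction on $\delta$ exactly as in your final paragraph. That is all you use, and the normality of $U$ in $G_\lambda$ rests only on $R_\eta\trianglelefteq G_\eta$, as in your third bullet.
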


\begin{proof} Since $A$ is ascendant in $G$, there exist an ordinal $\gamma$ and a family of subgroups $(A_\alpha)_{\alpha\leq\gamma}$ satisfying 
$$ \left\{ 
  \begin{array}{ll} A_0=A,\,\, A_\gamma=G, \\[1mm] A_{\alpha_1}\leq A_{\alpha_2} & \text{whenever } \alpha_1\leq\alpha_2\leq\gamma, \\[1mm] A_\alpha\trianglelefteq A_{\alpha+1} & \text{for every } \alpha<\gamma, \\[1mm] A_\lambda=\displaystyle\bigcup_{\alpha<\lambda}A_\alpha & \text{for every limit ordinal } \lambda\leq\gamma. 
  \end{array} \right. $$ 
For each $\alpha\leq\gamma$, put $ B_\alpha=\langle A^{A_\alpha}\rangle,$ the normal closure of $A$ in $A_\alpha$. We prove by transfinite induction that $B_\alpha$ is locally solvable for every $\alpha\leq\gamma$. 

Since $B_0=A$, the assertion holds for $\alpha=0$. 

Let $\lambda\leq\gamma$ be a limit ordinal, and suppose that $B_\alpha$ is locally solvable for every $\alpha<\lambda$. The monotonicity of $(A_\alpha)_{\alpha\leq\gamma}$ implies that $(B_\alpha)_{\alpha<\lambda}$ is an ascending chain. Moreover, $$ B_\lambda = \bigl\langle A^{A_\lambda}\bigr\rangle = \bigcup_{\alpha<\lambda} \bigl\langle A^{A_\alpha} \bigr\rangle = \bigcup_{\alpha<\lambda}B_\alpha. $$ Indeed, every element of $A_\lambda$ lies in some $A_\alpha$ with $\alpha<\lambda$. Hence $B_\lambda$ is a directed union of locally solvable groups, and is therefore locally solvable. 

Now let $\alpha=\beta+1$, and assume that $B_\beta$ is locally solvable. Moreover, we have $B_{\beta+1}=\bigl\langle B_\beta^{A_{\beta+1}}\bigr\rangle$, and therefore $B_\beta^x\leq B_{\beta+1}$ for every $x\in A_{\beta+1}$. Since \(B_\beta^x\trianglelefteq A_\beta\) and $B_\beta^x\leq B_{\beta+1}\leq A_\beta,$ it follows that $B_\beta^x\trianglelefteq B_{\beta+1}.$
The subgroup $B_\beta^x$ is locally solvable. Since $G$ is $R_{\mathrm{L}\mathfrak{S}}$-hereditary, the locally solvable radical
$R_{\mathrm{L}\mathfrak{S}}(B_{\beta+1})$ exists, and the preceding result
gives
$B_\beta^x \leq R_{\mathrm{L}\mathfrak{S}}(B_{\beta+1}).$ As these conjugates generate
$B_{\beta+1}$, it follows that
$B_{\beta+1} \leq R_{\mathrm{L}\mathfrak{S}}(B_{\beta+1}).$
Thus $B_{\beta+1}$ is locally solvable. This completes the transfinite
induction, and hence the normal closure
$\langle A^G\rangle=B_\gamma$
is locally solvable. 
\end{proof}

The preceding lemma gives the following corollary.

\begin{corollary}\label{cor:ascendantlocsolsubgroupwithnorclosislocsol}
Let $G$ be a locally \textup{(\emph{solvable-by-finite})} group. If $A$ is an ascendant locally radical subgroup of $G$, then $\langle A^G \rangle$ is locally solvable.
\end{corollary}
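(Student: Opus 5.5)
The corollary should follow from Lemma~\ref{lem:ascenlocsolsubgroupislocsol} once two hypotheses are checked. First, $G$ must be $R_{\mathrm{L}\mathfrak S}$-hereditary. Second, $A$ must be locally solvable rather than merely locally radical.

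For the first point, let $H\le G$ be arbitrary. Every finitely generated subgroup of $H$ is a finitely generated subgroup of $G$, so it is solvable-by-finite. Hence $H$ is itself locally (solvable-by-finite). By Theorem~\ref{theo:locally-finite}, or directly by \cite[Lemma~4.6]{CNH2025}, $H$ then admits the locally solvable radical $R_{\mathrm{L}\mathfrak S}(H)$. As $H$ was arbitrary, $G$ is $R_{\mathrm{L}\mathfrak S}$-hereditary.

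For the second point, $A$ is a subgroup of $G$, so by the same argument it is locally (solvable-by-finite). Since $A$ is locally radical, Lemma~\ref{lem:locradislocsolonlocsolbyfinite} shows that $A$ is locally solvable.

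With both facts in hand, Lemma~\ref{lem:ascenlocsolsubgroupislocsol} applies to the ascendant locally solvable subgroup $A$ of the $R_{\mathrm{L}\mathfrak S}$-hereditary group $G$, and gives that $\langle A^G\rangle$ is locally solvable.

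The only step needing care is the inheritance of local (solvable-by-finite)-ness by subgroups, together with the existence of the radical in each subgroup; both are immediate from the definitions and the cited results, so I expect no genuine obstacle.
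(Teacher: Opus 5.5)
Your proof is correct and follows the paper's intended route. The paper states this corollary as an immediate consequence of Lemma~\ref{lem:ascenlocsolsubgroupislocsol}, with exactly the two checks you supply: $G$ is $R_{\mathrm{L}\mathfrak S}$-hereditary because subgroups inherit local (solvable-by-finite)-ness (Theorem~\ref{theo:locally-finite}), and $A$ is locally solvable by Lemma~\ref{lem:locradislocsolonlocsolbyfinite}; this mirrors the written proof of Corollary~\ref{cor:ascendant_trivial}.
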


\subsection{Conjugate and commutator characterizations} \hfill

\medskip
Recall that a group is \emph{completely reducible}, or a \emph{\textup{CR}-group}, if it is a restricted direct product of possibly infinite simple groups (see \cite[pp.~85 and~88]{Robinson:CourseTheoryGroups}). Every group $G$ admits a unique maximal normal centerless \textup{CR}-subgroup, called the \emph{centerless \textup{CR}-radical} of $G$ (see \cite[3.3.17]{Robinson:CourseTheoryGroups}).

Although the seven-commutator characterization follows implicitly from the reduction carried out in Section~2 of \cite{Gordeev08}, we give the details
because the precise formulation in terms of seven commutators is not stated explicitly there. Before stating the result, we fix our notation for  conjugates and commutators. For $a,b\in G$, we write
  $b^a=aba^{-1}$ and $[a,b]=aba^{-1}b^{-1}.$
\begin{theorem}\label{prop:7commutatorcriterion}
Let $G$ be a finite group, and let $g \in G$. If the subgroup $$\bigl\langle [g,x_1], \dots, [g,x_7] \bigr\rangle$$ is solvable for every $x_1, \dots, x_7 \in G$, then $g \in R(G)$.
\end{theorem}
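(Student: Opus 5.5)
We argue by a minimal counterexample, following the reduction in Section~2 of \cite{Gordeev08}. Let $G$ be a finite group of least order having an element $g$ with $g\notin R(G)$ such that $\bigl\langle [g,x_1],\dots,[g,x_7]\bigr\rangle$ is solvable for all $x_1,\dots,x_7\in G$.

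\emph{Step 1: $R(G)=1$.} The commutator hypothesis passes to quotients: $[\bar g,\bar x_i]$ is the image of $[g,x_i]$, and images of solvable groups are solvable. Since $R(G/R(G))=1$, the element $\bar g$ is nontrivial, so $\bar g\notin R(G/R(G))$. Minimality then forces $R(G)=1$.

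\emph{Step 2: reduction to an almost simple group.} Since $R(G)=1$, the socle $N=F^*(G)$ is a direct product $T_1\times\cdots\times T_k$ of nonabelian simple groups, and $C_G(N)=1$. Then $g$ acts nontrivially on $N$. There are two cases.
\begin{enumerate}[label=\textup{(\alph*)}]
\item \emph{$g$ permutes the factors $T_j$ nontrivially}, say $T_1^g=T_2\neq T_1$. Choose $x_1=t\in T_1$. A direct computation shows that $[g,t]$ has components $t_1$-type and $(t^{-1})^{g}$-type in the two coordinates $T_1,T_2$. Running over several $t\in T_1$, these commutators generate a subgroup whose projection to $T_1$ is $T_1$, because $T_1$ is generated by two elements. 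This contradicts solvability, so this case uses only two commutators.
\item \emph{$g$ normalizes some $T=T_j$ and acts nontrivially on it.} Taking $x_i\in T$ gives $[g,x_i]\in T$. We may then pass to the almost simple group $\langle T,g\rangle\le \mathrm{Aut}(T)$ with $g\notin T$, or $g\in T$ nontrivial.
\end{enumerate}
So it suffices to show the following: \emph{in an almost simple group $A$ with socle $T$, for every $1\ne g\in A$ there exist $x_1,\dots,x_7\in T$ such that $\langle [g,x_1],\dots,[g,x_7]\rangle$ is not solvable.} Note that $[g,x]=g\,(g^{-1})^{x}$, so $\langle [g,x_i]\rangle$ contains $g\,g^{-x_i}$.

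\emph{Step 3: the almost simple case (main obstacle).} The subgroup $H=\langle g^{x_0},g^{x_1},\dots,g^{x_7}\rangle$ contains $\langle [g,x_i]\rangle$ with small index in the appropriate sense. More precisely, the subgroup generated by all $g\,g^{-x}$ is normal in $\langle g^T\rangle$ with cyclic quotient. The natural input is the conjugate-generation result from \cite{Guralnick06} and \cite{Gordeev08}: for $1\ne g\in A$, there exist four conjugates of $g$ generating a nonsolvable subgroup, indeed one containing $T$ up to exceptions.

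Given conjugates $g^{y_1},\dots,g^{y_4}$ generating a nonsolvable $K$, the elements $g^{y_1}g^{-y_j}$ for $j=2,3,4$ generate a normal subgroup of $K$ with cyclic quotient. That subgroup is therefore nonsolvable. Conjugating by $y_1^{-1}$, these elements become $g\,g^{-y_1^{-1}y_j}$, and each of these is a single commutator $[g,x]$. This gives a bound of three commutators, which is much better than seven.

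The delicate point is exactly the cyclic-quotient argument, and the place where the count seven arises in \cite{Gordeev08}. There, the conjugates are taken in $G$ rather than in a subgroup, and one must handle $g$ whose action pattern requires combining case (a) with the almost-simple bound. Concretely, this means controlling the projection across up to two factors, each needing a separate family of commutators plus a correction term, giving $3+3+1=7$.

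I would first try to push the three-commutator argument above directly. If the normal-subgroup argument fails because $g\notin K$, I would replace the $g\,g^{-x}$ by the genuine commutators $[g,x_i]$ and use that the subgroup $\langle [g,x]: x\in G\rangle=[g,G]$ is normal in $G$ and contains $[g,T]=T$. Then I would reduce to finding seven commutators generating a subgroup whose projection onto some simple factor is nonsolvable.
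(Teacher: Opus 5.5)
Your Steps 1 and 2 are fine. Your case (a) is a clean replacement for the paper's appeal to the reduction in \cite{Gordeev08}: if $g$ moves a factor $T_1$ to $T_2\neq T_1$, two commutators $[g,t_1],[g,t_2]$ with $\langle t_1,t_2\rangle=T_1$ already project onto $T_1$.

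\textbf{The gap is Step 3, which is the actual content of the theorem.} You never show that a nontrivial, possibly \emph{outer}, $g$ acting on a simple factor $T$ admits seven commutators $[g,x_i]$ generating a nonsolvable group. Your sketch rests on a false claim. For conjugates $a_j=g^{y_j}$ generating a nonsolvable $K$, the subgroup generated by the elements $a_1a_j^{-1}$ need not be normal in $K$; only their normal closure in $K$ has cyclic quotient. For example, $a_1=(1\,2\,3)$ and $a_2=(3\,4\,5)$ are conjugate in $A_5$ and generate $A_5$, yet $\langle a_1a_2^{-1}\rangle$ is cyclic. So no three-commutator bound follows. Your explanation of seven as $3+3+1$ via projections onto two factors has no argument behind it, and the proposal ends with a plan rather than a proof.

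The missing idea, which the paper uses, is to pass from $g$ to an \emph{inner} element of $T$:
\begin{enumerate}
\item Since $g$ normalizes $T$ and acts nontrivially on it, pick $x\in T$ with $c=[g,x]\neq 1$. Then $c\in T$.
\item Apply the simple-group result \cite[Theorem~1.11]{Gordeev08} to $c$. This gives $y_1,y_2,y_3\in T$ with $\langle [c,y_1],[c,y_2],[c,y_3]\rangle$ nonsolvable.
\item Transfer back to commutators of $g$ using the identity $y[g,x]y^{-1}=[g,y]^{-1}[g,yx]$. It gives $[c,y_i]=c\,(y_icy_i^{-1})^{-1}\in\langle [g,x],[g,y_i],[g,y_ix]\rangle$.
\end{enumerate}
Hence that nonsolvable group lies in the group generated by the seven commutators $[g,x]$ and $[g,y_i],[g,y_ix]$ for $i=1,2,3$. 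This is where $7=1+2\cdot 3$ comes from; no second simple factor is involved.
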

\begin{proof}
Without loss of generality, assume that $ G $ is  semisimple. Suppose, toward a contradiction, that the assertion fails. Let $ G $ be a minimal counterexample. Thus, there exists a non-trivial element $ g\in G $ such that the~subgroup $$\langle[g,x_1],\dots,[g,x_7]\rangle$$ is solvable for all $ x_1,\dots,x_7\in G $.

By arguments analogous to those of the reduction theorem in \cite[Section~2]{Gordeev08}, we conclude both that the centerless CR-radical $V$ of $G$ is a direct product of isomorphic non-abelian simple groups, and that $g$ normalizes each direct factor. Moreover, in view of \cite[3.3.18(i)]{Robinson:CourseTheoryGroups}, the centralizer $C_G(V)$ is trivial. Thus, we can find a simple direct factor $S_0$ and an element $x \in S_0$ 
such that the commutator $c = [g,x]$ is non-trivial. According to \cite[Theorem 1.11]{Gordeev08}, there exist elements $y_1, y_2, y_3 \in S_0$ such that the subgroup 
$$\bigl\langle [c,y_1],[c,y_2],[c,y_3]\bigr\rangle$$ is non-solvable.
The formula $y[g,x]y^{-1}=[g,y]^{-1}[g,yx]$ now shows that 
$$\bigl[[g,x],y_i\bigr]\in\bigl\langle[g,x],[g,y_i],[g,y_ix]\bigr\rangle \text{ for } i=1,2,3.$$
From this, we have the inequality
$$\bigl\langle[c,y_1],[c,y_2],[c,y_3]\bigr\rangle
\leq
\bigl\langle[g,x],[g,y_i],[g,y_ix] \bigm| i=1,2,3\bigr\rangle.$$
The solvability of the right-hand side stands in contradiction to the nonsolvability of the left-hand side. This completes the proof.
\end{proof}

\begin{Remark}
The bound seven is not expected to be optimal. In \cite{Gordeev08}, Gordeev, Grunewald, Kunyavski\u{\i}, and Plotkin
asked whether the bound seven can be reduced to three. More precisely, the authors asked whether, for every finite group $G$ and every $g\in G$, $g\in R(G)$ if and only if $\langle [g,x_1],[g,x_2],[g,x_3]\rangle$  is solvable for all $x_1,x_2,x_3\in G.$
\end{Remark}

Together, Theorem~\ref{prop:7commutatorcriterion} and \cite[Theorem~1.1]{Gordeev09} give the following equivalent conditions for
membership in $R_{\mathrm{L}\mathfrak S}(G)$.

\begin{theorem}\label{theo:chacoflocsolrad}
Let $G$ be a locally \textup{(\emph{solvable-by-finite})} group, and let $g \in G$. Then, the following statements are equivalent:
\begin{enumerate}[label= \textup{(\arabic*)}]
    \item $g \in R_{\textup{L}\mathfrak{S}}(G).$     \item The normal closure $\langle g^G \rangle$ is locally solvable. 
    \item The normal closure $\langle g^{\langle x \rangle} \rangle$ of $g$ in $\langle g, x \rangle$ is solvable for all $x \in G$.   
    \item Every four conjugates of $g$ generate a solvable subgroup.
    \item The subgroup $\bigl\langle [g,x_1], \ldots, [g, x_7] \bigr\rangle$ is solvable for all $x_1, \ldots, x_7 \in G$.
\end{enumerate}
\end{theorem}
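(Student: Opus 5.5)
The plan is to prove the chain $(1)\Rightarrow(2)\Rightarrow(3)\Rightarrow(4)\Rightarrow(1)$ and, separately, $(2)\Rightarrow(5)\Rightarrow(1)$. Throughout, the existence of $R_{\LS}(G)$ comes from Theorem~\ref{theo:locally-finite}. The reverse implications all follow one pattern: pass to a finitely generated subgroup, then to a finite quotient, and apply the corresponding finite criterion.

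The forward implications are routine. For $(1)\Rightarrow(2)$, note that $\langle g^G\rangle\le R_{\LS}(G)$ because the radical is normal; a subgroup of a locally solvable group is locally solvable. For $(2)\Rightarrow(3)$, the normal closure of $g$ in $\langle g,x\rangle$ is generated by the conjugates $g^{x^k}$. Hence it is the normal closure of $g$ in $H=\langle g,x\rangle$. It lies in $\langle g^G\rangle\cap H$, and this intersection is solvable: $H/(H\cap\langle g^G\rangle)$ is cyclic, so \cite[Lemma~2.5]{CNH2025} applies, exactly as in the proof of Theorem~\ref{theo:locally-finite}. For $(2)\Rightarrow(4)$ and $(2)\Rightarrow(5)$, four conjugates of $g$, or the commutators $[g,x_i]=g\cdot (g^{-1})^{x_i}$, generate a finitely generated subgroup of the locally solvable group $\langle g^G\rangle$, which is therefore solvable. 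For $(3)\Rightarrow(4)$, the same conclusion follows, since $(3)$ gives $(1)$ once we note that $\langle g,x\rangle$ is cyclic-by-solvable, hence solvable, for every $x$. Thus $g\in S(G)=R_{\LS}(G)$ by Theorem~\ref{theo:locally-finite}, and the chain closes back to $(2)$.

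The main implications are $(4)\Rightarrow(1)$ and $(5)\Rightarrow(1)$. By Theorem~\ref{theo:locally-finite}, it suffices to show $g\in S(G)$, i.e.\ that $L=\langle g,x\rangle$ is solvable for each $x\in G$. Since $G$ is locally (solvable-by-finite), $L$ has a solvable normal subgroup $A$ with $Q=L/A$ finite. Let $\pi\colon L\to Q$ be the projection. Hypothesis $(4)$ (resp.\ $(5)$), applied to elements of $L$, passes to $Q$: any four conjugates of $\pi(g)$ in $Q$ are images of four conjugates of $g$ in $L$ and so generate a solvable group. Likewise, $\langle[\pi(g),q_1],\dots,[\pi(g),q_7]\rangle$ is an image of a solvable group. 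By \cite[Theorem~1.1]{Gordeev09} (four conjugates) resp.\ Theorem~\ref{prop:7commutatorcriterion}, $\pi(g)\in R(Q)$. Then $Q=\langle \pi(g),\pi(x)\rangle$ is an extension of the normal solvable subgroup $\langle\pi(g)^Q\rangle\le R(Q)$ by a cyclic group, hence solvable. So $L$ is solvable-by-solvable, i.e.\ solvable, and $g\in S(G)$.

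The expected obstacle is only bookkeeping: making sure the finite criteria are applied to the correct finite quotient $Q$ of a finitely generated subgroup, not to $G$ itself. In particular, the hypotheses must be restricted to $x_i\in L$ so that lifts exist. The four-conjugate criterion of \cite{Gordeev09} is used as a black box for finite groups.
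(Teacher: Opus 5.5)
Your proposal is correct and follows essentially the paper's route: restrict to $L=\langle g,x\rangle$, pass to a finite quotient, apply the four-conjugate criterion of \cite{Gordeev09} or Theorem~\ref{prop:7commutatorcriterion}, and conclude via $S(G)=R_{\LS}(G)$ from Theorem~\ref{theo:locally-finite}. The differences are cosmetic. You quotient by an arbitrary solvable normal subgroup of finite index rather than by $R(H)$, so you do not need \cite[Theorem~2.8]{CNH2025}; you argue $(5)\Rightarrow(1)$ directly rather than by contradiction; and in $(3)\Rightarrow(1)$ the group $\langle g,x\rangle$ is solvable-by-cyclic, not cyclic-by-solvable.
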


\begin{proof} By Theorem~\ref{theo:locally-finite}, $S(G) = R_{\textup{L}\mathfrak{S}}(G)$, which immediately implies the equivalence of \textup{(1)}, \textup{(2)}, and \textup{(3)}. Only $\textup{(4)}\implies \textup{(1)}$ and
$\textup{(5)}\implies \textup{(1)}$ require~proof.

For $\textup{(4)} \implies \textup{(1)}$, assume that any four conjugates of $g$ generate a solvable subgroup. Given any $x \in G$, let $H = \langle g, x \rangle$. Since $H$ is solvable-by-finite, it follows from \cite[Theorem~2.8]{CNH2025} that $H/R(H)$ is finite. The image $\overline{g}$ of $g$ in $H/R(H)$ inherits the four-conjugate condition. Thus, \cite[Theorem~1.1]{Gordeev09} implies $\overline{g} \in R(H/R(H))$, so $H = \langle g, x \rangle$ is solvable. Therefore $g \in R_{\textup{L}\mathfrak{S}}(G)$.

For $\textup{(5)} \implies \textup{(1)}$, suppose every subgroup generated by seven commutators of $g$ is solvable.
Suppose, toward a contradiction, that $g \notin R_{\textup{L}\mathfrak{S}}(G)$. By Theorem~\ref{theo:locally-finite}, there exists $x_0 \in G$ such that $H_0 = \langle g, x_0 \rangle$ is non-solvable. Since $H_0$ is solvable-by-finite, \cite[Theorem~2.8]{CNH2025} implies that $H_0/R(H_0)$ is finite, and the image $\overline{g}$ remains non-trivial in this quotient. It then follows from Theorem~\ref{prop:7commutatorcriterion} that there exist elements $\overline{b}_1, \ldots, \overline{b}_7 \in H_0/R(H_0)$ for which the subgroup $\bigl\langle [\overline{g}, \overline{b}_1], \ldots, [\overline{g}, \overline{b}_7] \bigr\rangle$ is non-solvable, a contradiction.
\end{proof}

Under suitable restrictions on the order of $g$, the four-conjugate condition in Theorem~\ref{theo:chacoflocsolrad}\textup{(4)} can be replaced by a two-conjugate condition. 

\begin{theorem}[{\cite[Theorem 1.4]{Gordeev2010}}]\label{prop:finite-prime}
Let $G$ be a finite group, and let $g\in G$ have prime order $p>3$.
If $\langle g,g^x\rangle$ is solvable for every $x\in G$, then $g\in R(G)$.
\end{theorem}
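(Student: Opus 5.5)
Since this is a result quoted from \cite{Gordeev2010}, the plan below reconstructs the strategy behind it rather than a self-contained argument. We argue by minimal counterexample. First, the hypothesis passes to quotients: if $\langle g,g^x\rangle$ is solvable, so is its image. Also $\bar g=gR(G)$ has order $1$ or $p$, and $\bar g=1$ already gives $g\in R(G)$. Hence we may pass to $G/R(G)$ and assume $R(G)=1$ and $g\neq 1$ of order $p>3$.

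Next we reduce to the almost simple case, as in the proof of Theorem~\ref{prop:7commutatorcriterion}. Let $V=S_1\times\cdots\times S_k$ be the socle, a product of nonabelian simple groups, with $C_G(V)=1$. We would consider two cases for how $g$ acts on the factors.
\begin{enumerate}[label=\textup{(\alph*)}]
    \item If $g$ permutes some factors nontrivially, it moves $S_1$ along a $p$-cycle $S_1,\dots,S_p$. We then choose $x\in S_1$ so that $\langle g,g^x\rangle$ contains a subgroup mapping onto something nonsolvable. For instance, $g^{-1}g^x=g^{-1}xgx^{-1}$ has components $x$ and $x^{-1}$ in two different factors, and conjugating by powers of $g$ should produce, inside one factor, a pair of elements generating a nonsolvable group. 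This uses that $S_1$ is generated by two suitable elements.
    \item Otherwise $g$ normalizes every factor and acts nontrivially on some $S_1$. We then replace $G$ by $\langle S_1,g\rangle$ modulo $C(S_1)$, which is almost simple. This does not change the conclusion, because the elements $x$ we find will lie in $S_1$.
\end{enumerate}

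So the core case is $S\le G\le \mathrm{Aut}(S)$, with $g$ of prime order $p\ge 5$. We must exhibit $x$ with $\langle g,g^x\rangle$ nonsolvable. This step requires the classification and is the main obstacle. We would go through the families as follows.
\begin{itemize}
    \item For alternating groups, $g$ is a product of $p$-cycles, and two $p$-cycles overlapping in a suitable number of points generate $A_m$ for some $m\ge p\ge 5$.
    \item For sporadic groups, we would check the finitely many classes of elements of order $p\ge 5$ using known structure constants, or the existence of a nonsolvable subgroup generated by two conjugates.
    \item For groups of Lie type, inner elements of order $p$ are semisimple or unipotent. We would pick a rank-one subgroup $\mathrm{SL}_2(q)$ or $\mathrm{PSL}_2(q)$ containing a conjugate of $g$ in a noncentral way, and use that two conjugates generate it, since $p\ge 5$ avoids the exceptional small cases.
    \item Outer elements (field automorphisms of order $p$) are handled via Lang--Steinberg: $g$ and $g^x$ generate a group containing $[g,x]$-type elements spanning a subfield subgroup.
\end{itemize}

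The bound $p>3$ is exactly what excludes the known counterexamples, namely involutions and elements of order $3$ in groups such as $\mathrm{PSU}_n(2)$ and $\mathrm{P}\Omega^+_8(2)$. I expect the Lie-type unipotent and outer-automorphism cases to demand the most delicate generation arguments.
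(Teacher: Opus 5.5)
The paper gives no proof of this statement. It is quoted from Gordeev et al.\ (Theorem~1.4 of the cited 2010 paper) and used as a black box, so your outline has to stand on its own. As a proof it has genuine gaps.

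\textbf{Case (a) fails as written.} Your reductions to $R(G)=1$, to $C_G(V)=1$, and in case (b) to the almost simple group $S_1\langle g\rangle/C_{S_1\langle g\rangle}(S_1)$ are fine. The construction in case (a) is not. Take $x\in S_1$. Then $g^x=xgx^{-1}=ug$ with $u=x\,(gxg^{-1})^{-1}$, so $\langle g,g^x\rangle=\langle g,u\rangle\le A\langle g\rangle$, where $A=\langle g^jxg^{-j}\mid 0\le j<p\rangle$. The generators of $A$ lie in the distinct, commuting factors of the orbit, so $A$ is abelian, and $g$ normalizes $A$. Hence $\langle g,g^x\rangle$ is abelian-by-cyclic, and so solvable, for every $x\in S_1$. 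Conjugating by powers of $g$ only produces powers of copies of $x$ in each factor, so no nonsolvable pair ever appears inside one factor.

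The fix is to spread $x$ over two consecutive factors of the orbit. Take $x=a\,(gbg^{-1})$ with $a,b\in S_1$ and $\langle a,b\rangle$ nonsolvable. Then $g^{-k}ug^{k}$ for $k=0,1,2$ have $S_1$-components $a$, $ba^{-1}$, $b^{-1}$. So the subgroup $\langle g^{-k}ug^{k}\mid 0\le k<p\rangle\le S_1\times\cdots\times S_p$ of $\langle g,g^x\rangle$ has nonsolvable projection to $S_1$. This uses $p\ge 3$.

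\textbf{The almost simple case, which carries all the content, is only sketched.} Your Lie-type mechanism does not work in general, because an element of prime order $p\ge5$ need not lie noncentrally in any subgroup $\mathrm{SL}_2(q)$ or $\mathrm{PSL}_2(q)$. Take $G=\mathrm{PSL}_3(5)$, whose maximal subgroups are $5^2{:}\mathrm{GL}_2(5)$ (two classes), $S_5$, $4^2{:}S_3$ and $31{:}3$. For $g$ of order $31=5^2+5+1$, every proper subgroup containing $g$ lies in $N_G(\langle g\rangle)\cong 31{:}3$, so no rank-one subgroup is available. The conclusion still holds there, since $\langle g,g^x\rangle=G$ whenever $x\notin N_G(\langle g\rangle)$, but the reason is an overgroup argument, not $\mathrm{SL}_2$-generation. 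In general, elements whose order is a primitive prime divisor of $q^n-1$ with $n\ge3$ need a separate treatment of this kind, for example via the known overgroups of such elements.

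There are two smaller holes of the same type:
\begin{itemize}
\item Your inner/field split omits diagonal automorphisms of order $p$, such as elements of $\mathrm{PGL}_5(q)\setminus\mathrm{PSL}_5(q)$ when $5\mid q-1$.
\item In the alternating case, $g$ is usually a product of several $p$-cycles, possibly with no fixed points, so the two-overlapping-$p$-cycles argument does not apply as stated.
\end{itemize}
As it stands, the proposal is a roadmap for the classification-based proof rather than a proof.
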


For locally (solvable-by-finite) groups, the finite result in Theorem~\ref{prop:finite-prime} leads to an exact two-conjugate characterization of membership in the locally solvable radical, with the exceptional primes $2$ and $3$ accounted for explicitly.

\begin{theorem}\label{thm:two-conjugate-23prime}
Let $G$ be a locally \textup{(\emph{solvable}-\emph{by}-\emph{finite})} group, and let $g\in G$. For each $x\in G$, set $H_x:=\langle g,x\rangle$. Then $g\in R_{\textup{L}\mathfrak{S}}(G)$ if and only if for every $x\in G$, the following two conditions hold: 
\begin{enumerate}[label = \textup{(\roman*)}]
    \item The subgroup $\langle g,g^x\rangle$ is solvable.
    \item $\gcd\bigl(|gR(H_x)|,6\bigr)=1.$ \end{enumerate}
\end{theorem}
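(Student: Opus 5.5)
The plan is to reduce both directions to the finite quotient $H_x/R(H_x)$ and then apply Theorem~\ref{prop:finite-prime} there. Theorem~\ref{theo:locally-finite} gives $R_{\textup{L}\mathfrak S}(G)=S(G)$, so membership $g\in R_{\textup{L}\mathfrak S}(G)$ is equivalent to solvability of every $H_x=\langle g,x\rangle$. For each $x$, the group $H_x$ is finitely generated and hence solvable-by-finite. By \cite[Theorem~2.8]{CNH2025} the quotient $Q_x:=H_x/R(H_x)$ is finite, and it has trivial solvable radical.

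For the forward direction, let $g\in R_{\textup{L}\mathfrak S}(G)=S(G)$ and $x\in G$. Then $H_x$ is solvable. Since $g^x=xgx^{-1}\in H_x$, condition (i) holds. Moreover $R(H_x)=H_x$, so the coset $gR(H_x)$ is trivial and has order $1$, which gives (ii).

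For the converse, assume (i) and (ii) for every $x\in G$. Fix $x$, write $Q=Q_x$, and let $\bar g,\bar x$ be the images of $g,x$ in $Q$, so that $Q=\langle \bar g,\bar x\rangle$. The key point is that condition (i) passes to all conjugates inside $Q$. Indeed, any $\bar y\in Q$ lifts to some $y\in H_x\le G$. By (i) applied to $y$, the group $\langle g,g^y\rangle$ is solvable, and it maps onto $\langle \bar g,\bar g^{\bar y}\rangle$, which is therefore solvable.

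Now suppose $\bar g\neq 1$. Its order is finite and, by (ii), coprime to $6$, so some power $h=\bar g^k$ has prime order $p>3$. For every $\bar y\in Q$ we have $\langle h,h^{\bar y}\rangle\le\langle \bar g,\bar g^{\bar y}\rangle$, so these subgroups are all solvable. Theorem~\ref{prop:finite-prime} then gives $h\in R(Q)=1$, a contradiction. Hence $\bar g=1$, so $Q=\langle\bar x\rangle$ is cyclic and thus solvable. Together with $R(Q)=1$ this forces $Q=1$, i.e. $H_x=R(H_x)$ is solvable. Since $x$ was arbitrary, $g\in S(G)=R_{\textup{L}\mathfrak S}(G)$.

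The main obstacle is the transfer of the two-conjugate hypothesis to $Q$. It needs (i) for conjugating elements ranging over all of $H_x$, not just over the generator $x$. This is available precisely because (i) is assumed for every element of $G$. Passing to a prime-order power $h$ is harmless, since subgroups generated by powers of $\bar g$ and their conjugates lie inside the corresponding subgroups generated by $\bar g$ and its conjugates.
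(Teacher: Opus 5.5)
Your proposal is correct and follows essentially the same route as the paper: reduce to the finite quotient $H_x/R(H_x)$, transfer condition (i) to all conjugates there by lifting elements to $H_x$, and use Theorem~\ref{prop:finite-prime} on a prime-order power of $gR(H_x)$ to force $gR(H_x)=1$. The differences are cosmetic: you inline the prime-power step that the paper packages as Lemmas~\ref{lem212} and~\ref{lem:twoconjugateschar}, and in the forward direction you use $R_{\mathrm{L}\mathfrak S}(G)=S(G)$ directly where the paper invokes \cite[Lemma~2.5]{CNH2025}.
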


The proof of Theorem~\ref{thm:two-conjugate-23prime} reduces to two finite group results, which we state next.

\begin{lemma}\label{lem212}
Let $G$ be a finite group, and let $g \in G$. If the subgroup $\langle g, g^x \rangle$ is solvable for every $x \in G$, then $gR(G)$ is a $\{2,3\}$-element. 
\end{lemma}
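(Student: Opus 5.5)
Passing to the quotient $\overline G=G/R(G)$ will be the first step. The hypothesis survives this, since $\langle \bar g,\bar g^{\bar x}\rangle$ is a homomorphic image of the solvable group $\langle g,g^x\rangle$. Because $R(\overline G)=1$, the statement reduces to the following: if $G$ is finite with $R(G)=1$ and $\langle g,g^x\rangle$ is solvable for all $x\in G$, then $g$ is a $\{2,3\}$-element. In other words, the order of $g$ has no prime divisor $p>3$.

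To prove this, suppose $p>3$ is a prime dividing $|g|$, say $|g|=p^a m$. Put $h=g^{|g|/p}$, which has order exactly $p$. The key observation is that $h$ inherits the two-conjugate condition. For every $x\in G$, $h^x=(g^x)^{|g|/p}$, so
\[
\langle h,h^x\rangle\leq\langle g,g^x\rangle ,
\]
and the right-hand side is solvable. Therefore $\langle h,h^x\rangle$ is solvable for every $x\in G$. Applying Theorem~\ref{prop:finite-prime} to $h$ in the finite group $\overline G$ gives $h\in R(\overline G)=1$. This contradicts the fact that $h$ has order $p>1$, so no prime $p>3$ divides $|\bar g|$, that is, $|gR(G)|$ is a product of powers of $2$ and $3$.

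The main obstacle is just making sure that Theorem~\ref{prop:finite-prime} is applied to an element of prime order, which is why we pass to the power $h$. Nothing more delicate is needed. In particular, the case where $g$ is already trivial modulo $R(G)$ is vacuous, since the trivial coset has order $1$ and is trivially a $\{2,3\}$-element.
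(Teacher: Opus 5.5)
Your proof is correct and matches the paper's argument: pass to $G/R(G)$, take the power of $\bar g$ of prime order $p>3$, note that it inherits the two-conjugate condition because $\langle h,h^x\rangle\leq\langle g,g^x\rangle$, and apply Theorem~\ref{prop:finite-prime} to force it into the trivial radical. The only blemish is notational and harmless: after renaming the reduced group as $G$ with $R(G)=1$, you switch back to writing $\overline G$ and $\bar g$.
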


\begin{proof}
Let $\overline{G}=G/R(G)$ and $\overline{g}=gR(G)$. Then $\langle\overline{g},\overline{g}^{\,\overline{x}}\rangle$ is solvable for every $\overline{x}\in\overline{G}$. It suffices to  show that no prime $p>3$ divides $|\overline{g}|$.

Suppose, toward a contradiction, that a prime $p>3$ divides $|\overline{g}|$. Put $n=|\overline{g}|$ and $\overline{h}=\overline{g}^{n/p}$. Since $\overline{h}$ is a power of $\overline{g}$, the subgroup $\langle\overline{h},\overline{h}^{\,\overline{x}}\rangle$ is solvable for every $\overline{x}\in\overline{G}$. By Theorem~\ref{prop:finite-prime}, we have $\overline{h}\in R(\overline{G})$. But the group $\overline{G}$ has trivial solvable radical, so that $\overline{h}=1$, which contradicts  $|\overline{h}|=p$. Thus no prime greater than $3$ divides $|\overline{g}|$, and therefore $\overline{g}$ is a $\{2,3\}$-element.
\end{proof}

The coprimality condition $\gcd(|g|,6)=1$ eliminates the possible
$\{2,3\}$-part of $gR(G)$ and gives the following characterization for finite groups.

\begin{lemma}\label{lem:twoconjugateschar}
Let $G$ be a finite group and let $g\in G$ with $\gcd(|g|,6)=1$. If the subgroup $\langle g,g^x\rangle$ is solvable for every $x\in G$, then $g \in R(G)$.
\end{lemma}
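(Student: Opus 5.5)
The plan is to combine Lemma~\ref{lem212} with an elementary order argument in the quotient $\overline{G}=G/R(G)$. Write $\overline{g}=gR(G)$. The hypothesis that $\langle g,g^x\rangle$ is solvable for every $x\in G$ is exactly the hypothesis of Lemma~\ref{lem212}. That lemma therefore gives that $\overline{g}$ is a $\{2,3\}$-element, meaning its order $|\overline{g}|$ is divisible only by the primes $2$ and $3$.

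Next, I would use that the order of an image divides the order of the element. The canonical epimorphism $G\to\overline{G}$ sends $g$ to $\overline{g}$, so $|\overline{g}|$ divides $|g|$. By assumption $\gcd(|g|,6)=1$, so $|g|$ has no prime divisor $2$ or $3$, and hence neither does $|\overline{g}|$. Combining this with the previous paragraph, $|\overline{g}|$ has no prime divisors at all, so $|\overline{g}|=1$. Thus $\overline{g}$ is trivial, which means $g\in R(G)$.

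No step here is a genuine obstacle: all the substantive work is in Lemma~\ref{lem212}, and through it in Theorem~\ref{prop:finite-prime}. The only point to state carefully is that the two-conjugate solvability hypothesis is applied in $G$ exactly as Lemma~\ref{lem212} requires. The passage to the quotient is handled inside that lemma, so nothing beyond the divisibility $|\overline{g}|\mid|g|$ is needed.
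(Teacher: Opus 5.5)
Your proof is correct and matches the paper's own argument. Both apply Lemma~\ref{lem212} to get that $|gR(G)|$ is a $\{2,3\}$-number, then use that $|gR(G)|$ divides $|g|$ together with $\gcd(|g|,6)=1$ to force $gR(G)=1$.
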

    
\begin{proof}
Let $\overline{g}=gR(G)$ be the image of $g$ in $G/R(G)$. Since
$|\overline{g}|$ divides $|g|$, the assumption $\gcd(|g|,6)=1$ gives
$\gcd(|\overline{g}|,6)=1.$
On the other hand, Lemma~\ref{lem212}, applied to the hypothesis that
$\langle g,g^x\rangle$ is solvable for every $x\in G$, shows that
$|\overline{g}|$ is a $\{2,3\}$-number. Therefore $|\overline{g}|=1$, and so $g\in R(G)$.
\end{proof}

\begin{proof}[Proof of Theorem~\ref{thm:two-conjugate-23prime}]
Assume first that $g \in R_{\textup{L}\mathfrak{S}}(G)$.~Fix  $x \in G$.~Then the subgroup $\langle g, g^x \rangle$ 
is solvable. Therefore \textup{(i)} holds. Moreover, since $G$ is locally (solvable-by-finite), the subgroup $H_x$ is solvable-by-finite. According to \cite[Lemma 2.5]{CNH2025}, we have $H_x \cap R_{\textup{L}\mathfrak{S}}(G) \leq R(H_x).$ Observe that $g$ belongs to this intersection. Thus $g\in R(H_x)$, so $gR(H_x)=R(H_x)$ and $|gR(H_x)|=1.$ Hence $\gcd\bigl(|gR(H_x)|,6\bigr)=1,$ which proves \textup{(ii)}.

Next assume that conditions (i) and (ii) hold. Fix an element $x \in G$. Since $H_x$ is solvable-by-finite,  $Q = H_x/R(H_x)$ is finite. Let $a =~gR(H_x) \in~Q$.  Condition (ii) now implies that $\gcd(|a|, 6) = 1$.

For any $y \in H_x$, condition (i) implies that $\langle g, g^y \rangle$ is solvable, and hence its image $\langle a, a^{yR(H_x)} \rangle$ is solvable.~Therefore,  the subgroup $\langle a, a^q \rangle$ is solvable for any $q \in Q$.~It follows from Lemma~\ref{lem:twoconjugateschar} that $a \in R(Q)$.~By \cite[Theorem~2.8]{CNH2025}, we have $R(Q)=1$.~Hence $a=1$, and $g\in R(H_x)$.~Consequently, $H_x/R(H_x)$ is cyclic, and thus $H_x$ is solvable.~By Theorem~\ref{theo:locally-finite}, $g \in R_{\textup{L}\mathfrak{S}}(G)$, as~required.
\end{proof}

If $g$ is torsion and $\gcd(|g|,6)=1$, then condition~\textup{(ii)} of
Theorem~\ref{thm:two-conjugate-23prime} holds automatically.  We therefore
obtain the following two-conjugate consequence.

\begin{corollary}\label{cor:two_conj_torsion}
Let $G$ be a locally \textup{(}solvable-by-finite\textup{)} group, and let $g \in G$. Suppose that $g$ is torsion with $\gcd \bigl(|g|,6 \bigr)=1$. If the subgroup $\langle g, g^x \rangle$ is solvable for every $x \in G$, then $g \in R_{\textup{L}\mathfrak{S}}(G)$.
\end{corollary}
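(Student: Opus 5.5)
The plan is to deduce Corollary~\ref{cor:two_conj_torsion} directly from Theorem~\ref{thm:two-conjugate-23prime} by checking its two conditions for every $x\in G$. Condition~\textup{(i)} is exactly the hypothesis that $\langle g,g^x\rangle$ is solvable for every $x\in G$, so nothing needs to be done there.

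For condition~\textup{(ii)}, fix $x\in G$ and put $H_x=\langle g,x\rangle$. Since $G$ is locally (solvable-by-finite), $H_x$ is solvable-by-finite, so $R(H_x)$ exists; by \cite[Theorem~2.8]{CNH2025}, $H_x/R(H_x)$ is even finite, although only existence of $R(H_x)$ is needed. The coset $gR(H_x)$ is the image of $g$ under the canonical epimorphism $H_x\to H_x/R(H_x)$. Because $g$ is torsion, $g^{|g|}=1$ gives $(gR(H_x))^{|g|}=R(H_x)$, so $|gR(H_x)|$ divides $|g|$. Any common prime divisor of $|gR(H_x)|$ and $6$ would then divide $\gcd(|g|,6)=1$, which is impossible; hence $\gcd\bigl(|gR(H_x)|,6\bigr)=1$.

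Since \textup{(i)} and \textup{(ii)} hold for every $x\in G$, the ``if'' direction of Theorem~\ref{thm:two-conjugate-23prime} gives $g\in R_{\textup{L}\mathfrak{S}}(G)$. There is no genuine obstacle: the only point needing care is that the order of the image divides $|g|$, which uses that $g$ has finite order. Without torsion, $|g|$ is undefined and \textup{(ii)} would have to be checked separately, which is why the corollary assumes $g$ is torsion.
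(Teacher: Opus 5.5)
Your proposal is correct and follows the paper's route: the corollary is deduced from the ``if'' direction of Theorem~\ref{thm:two-conjugate-23prime}, with condition~\textup{(i)} supplied by the hypothesis and condition~\textup{(ii)} holding automatically. The paper only asserts that \textup{(ii)} is automatic, and your observation that $|gR(H_x)|$ divides $|g|$ is the justification it leaves implicit.
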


\section{Locally linear groups and stable general linear groups}

This section is devoted to locally linear groups and stable general linear groups over division rings. Recall that a group is \emph{locally linear} if each of its finitely generated subgroups admits a faithful finite-dimensional linear representation over some field. The main ingredient is Zassenhaus's theorem \cite[Satz~8]{Zassenhaus1938}, which states that every locally solvable subgroup of $\mathrm{GL}_n(\mathbb F)$ is solvable, where $\mathbb F$ is a field. This allows the preceding radical results to be applied within finitely generated linear subgroups. Throughout this section, $D$ denotes a division ring. We identify $\mathrm{GL}_n(D)$ with a subgroup of $\mathrm{GL}_{n+1}(D)$ via the embedding $A\longmapsto \operatorname{diag}(A,1).$ The \emph{stable general linear group} over $D$ is then defined by
  $$\mathrm{GL}_{\infty}(D) = \varinjlim_{n}\mathrm{GL}_n(D) = \bigcup_{n\geq 1}\mathrm{GL}_n(D).$$

Recall that a group $G$ is \emph{hyper-\textup{(}locally solvable\textup{)}} if it admits an ascending normal series with locally solvable factors. We shall use the following consequence of Zassenhaus's theorem.

\begin{lemma}
\label{lem:radlinearissol}
Every hyper-\textup{(}locally solvable\textup{)} subgroup of
$\mathrm{GL}_n(\mathbb F)$ is solvable.
\end{lemma}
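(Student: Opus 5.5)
Let $H\leq \mathrm{GL}_n(\mathbb F)$ be hyper-(locally solvable), with an ascending normal series $(H_\alpha)_{\alpha\leq\gamma}$, $H_\alpha\trianglelefteq H$, with locally solvable factors. The plan is transfinite induction on $\alpha$, showing that every term $H_\alpha$ is solvable, with derived length bounded by a constant $d=d(n)$ depending only on $n$. Zassenhaus's theorem \cite[Satz~8]{Zassenhaus1938} in its standard quantitative form will do the work: every locally solvable subgroup of $\mathrm{GL}_n(\mathbb F)$ is solvable of derived length at most some $d(n)$. Since $\mathbb F$ can be replaced by its algebraic closure, we may assume $\mathbb F$ is algebraically closed.

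The base case $H_0=1$ is trivial. For the limit step, note that $H_\lambda=\bigcup_{\alpha<\lambda}H_\alpha$ is a directed union of solvable groups, hence locally solvable. Zassenhaus's theorem then makes $H_\lambda$ solvable. Because every limit term is handled this way, no uniform length bound is needed at limit stages. I must stress that induction on derived length alone would fail here: the lengths could grow without bound along the chain.

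The successor step is the main obstacle. Suppose $H_\beta$ is solvable and $H_{\beta+1}/H_\beta$ is locally solvable. I would argue that $H_{\beta+1}$ is itself locally solvable. For a finitely generated $K\leq H_{\beta+1}$, the quotient $K/(K\cap H_\beta)$ is solvable, since it embeds in $H_{\beta+1}/H_\beta$. Also $K\cap H_\beta$ is solvable. Hence $K$ is solvable, so $H_{\beta+1}$ is locally solvable, and Zassenhaus's theorem makes it solvable. Strictly, the inductive hypothesis used is only ``$H_\beta$ is solvable'', so every step reduces to the same argument.

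Finally, $H=H_\gamma$ is solvable. The subtle point to check is that Zassenhaus's theorem applies to arbitrary (not necessarily finitely generated) locally solvable linear groups. This is exactly its content, and it is what the transfinite induction relies on.
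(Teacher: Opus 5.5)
Your proof is correct and follows the paper's argument essentially verbatim. You use transfinite induction along the series, handle the limit step by observing that a union of a chain of solvable groups is locally solvable, and handle the successor step by showing that $H_{\beta+1}$ is locally solvable via $K\cap H_\beta$ and $K/(K\cap H_\beta)\cong KH_\beta/H_\beta$, with Zassenhaus's theorem turning local solvability into solvability in both cases (the uniform bound $d(n)$ and the passage to the algebraic closure are harmless but unnecessary).
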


\begin{proof}
Let $G\leq \mathrm{GL}_n(\mathbb F)$, and let
$(G_\alpha)_{\alpha\leq\gamma}$ be an ascending series of $G$ with locally solvable factors.  We proceed by transfinite induction to show that every $G_\alpha$ is solvable. The assertion is clear in case where $\alpha =0$.  

Suppose that $\lambda\leq\gamma$ is a limit ordinal and that $G_\alpha$ is solvable for every $\alpha<\lambda$.
Since $G_\lambda=\cup_{\alpha<\lambda}G_\alpha,$ every finitely generated subgroup of $G_\lambda$ is contained in some $G_\alpha$ with $\alpha<\lambda$.  Thus $G_\lambda$ is locally solvable, and hence solvable by Zassenhaus's theorem.

Now let $\alpha=\beta+1$ and suppose that $G_\beta$ is solvable.  For every
finitely generated subgroup $H\leq G_{\beta+1}$, the subgroup
$H\cap G_\beta$ is solvable, while
$$H/(H\cap G_\beta) \cong HG_\beta/G_\beta $$
is a finitely generated subgroup of the locally solvable group
$G_{\beta+1}/G_\beta$, and is therefore solvable.  Hence $H$ is solvable.
Thus $G_{\beta+1}$ is locally solvable, and hence solvable by Zassenhaus's theorem. Consequently $G$ is solvable.
\end{proof}

Recall that a group is radical if and only if it is
hyper-(locally nilpotent). The preceding lemma therefore collapses local radicality to local solvability inside locally linear groups.

\begin{lemma} \label{lem:locradislocsol}
    A locally linear group is locally radical if and only if it is locally solvable.
\end{lemma}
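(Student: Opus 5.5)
We prove the two implications separately; the reverse one is nearly formal. Let $G$ be locally linear.

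Suppose first that $G$ is locally solvable. Let $H\le G$ be finitely generated. Then $H$ is solvable, and its derived series is a finite normal series with abelian factors. Abelian groups are locally nilpotent, so $H$ is radical. Hence $G$ is locally radical. This direction does not use linearity.

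Conversely, suppose that $G$ is locally radical, and let $H\le G$ be finitely generated. By hypothesis $H$ is radical, so it has an ascending series with locally nilpotent factors. Because $G$ is locally linear, $H$ admits a faithful finite-dimensional representation, so we may regard $H$ as a subgroup of $\mathrm{GL}_n(\mathbb F)$ for some field $\mathbb F$ and some $n$.

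The series on $H$ is a priori only ascending, not normal in $H$. So first we pass to a normal series. As recalled just before the statement, a group is radical if and only if it is hyper-(locally nilpotent). Thus $H$ has an ascending normal series with locally nilpotent factors. Locally nilpotent groups are locally solvable, so $H$ is hyper-(locally solvable). Lemma~\ref{lem:radlinearissol} then shows that $H$ is solvable. Since $H$ was an arbitrary finitely generated subgroup, $G$ is locally solvable.

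The only delicate point is this passage from ascending series to normal series, and we handle it by citing the equivalence of radical and hyper-(locally nilpotent). Alternatively, one can observe that the transfinite induction in the proof of Lemma~\ref{lem:radlinearissol} never uses normality in the whole group. It only needs $G_\beta\trianglelefteq G_{\beta+1}$ and Zassenhaus's theorem applied to each term $G_\alpha\le\mathrm{GL}_n(\mathbb F)$. So that argument applies verbatim to an ascending series with locally nilpotent factors.
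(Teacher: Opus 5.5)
Your proof is correct and is essentially the paper's argument: the paper likewise treats locally solvable $\Rightarrow$ locally radical as trivial, and for the converse passes from radical to hyper-(locally nilpotent), hence hyper-(locally solvable), and applies Lemma~\ref{lem:radlinearissol} to each finitely generated (hence linear) subgroup. Your alternative remark is also accurate: the paper's proof of that lemma only uses $G_\beta\trianglelefteq G_{\beta+1}$, so it works verbatim for an arbitrary ascending series.
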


As noted at the beginning of this section, Theorem~\ref{theo:Wilsonradsetlineargroup} applies to every finitely
generated subgroup of a locally linear group, while the preceding lemma shows that local radicality and local solvability coincide.

\begin{theorem} \label{theo:locradislocsol} Let $G$ be a locally linear group. Then $G$ is $R_{\mathrm{L}\mathfrak S}$-hereditary and, for every $\omega\in\Omega$, $$ R_{\mathrm{L}\mathfrak S}(G) = R_{\mathrm{L}\mathfrak R}(G) = S(G) = \mathcal W_\omega(G). $$ 
\end{theorem}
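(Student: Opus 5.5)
The strategy is to copy the proof of Theorem~\ref{theo:locally-finite}. We replace the finite quotient $L/A$ by the finitely generated linear subgroup itself, and use Theorem~\ref{theo:Wilsonradsetlineargroup} in place of Proposition~\ref{prop:Wilson-sequence}(3).

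\emph{Step 1: existence and hereditariness of the radical.} Every subgroup of a locally linear group is locally linear, so it suffices to show that $R_{\mathrm{L}\mathfrak S}(G)$ exists for every locally linear $G$. Let $R$ be the subgroup generated by all normal locally solvable subgroups of $G$. It is enough to show that the product $NM$ of two normal locally solvable subgroups is locally solvable; the general case then follows because a directed union of locally solvable groups is locally solvable.

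Take a finitely generated $H\le NM$. Then $H$ is linear, say $H\le \mathrm{GL}_n(\mathbb F)$. The subgroup $H\cap N$ is normal in $H$ and locally solvable, so it is solvable by Zassenhaus's theorem. Similarly, $H/(H\cap N)$ embeds in $NM/N\cong M/(M\cap N)$, which is locally solvable. Since $H/(H\cap N)$ is finitely generated, it is solvable. Hence $H$ is solvable, and $R=R_{\mathrm{L}\mathfrak S}(G)$ exists.

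By Lemma~\ref{lem:locradislocsol}, normal locally radical subgroups are exactly the normal locally solvable ones. So the same argument gives $R_{\mathrm{L}\mathfrak R}(G)=R_{\mathrm{L}\mathfrak S}(G)$.

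\emph{Step 2: $R_{\mathrm{L}\mathfrak S}(G)\subseteq S(G)$.} Let $g\in R_{\mathrm{L}\mathfrak S}(G)$ and $x\in G$, and put $H=\langle g,x\rangle$. Then $H$ is linear and $H\cap R_{\mathrm{L}\mathfrak S}(G)$ is a normal locally solvable subgroup of $H$, hence solvable by Zassenhaus. Its quotient $H/(H\cap R_{\mathrm{L}\mathfrak S}(G))$ is cyclic, being generated by the image of $x$. Therefore $H$ is solvable. Combined with Lemma~\ref{lem:SsubseteqW}(1), this gives $R_{\mathrm{L}\mathfrak S}(G)\subseteq S(G)\subseteq\mathcal W_\omega(G)$.

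\emph{Step 3: $\mathcal W_\omega(G)\subseteq R_{\mathrm{L}\mathfrak S}(G)$.} This is the key step. Put $M=\langle\mathcal W_\omega(G)\rangle$, which is characteristic by Lemma~\ref{lem:SsubseteqW}(2). Given $g_1,\dots,g_m\in\mathcal W_\omega(G)$, the group $L=\langle g_1,\dots,g_m\rangle$ is linear.

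The Wilson condition is inherited by subgroups: if $w_n(g_i,h)=1$ eventually for every $h\in G$, then in particular this holds for every $h\in L$. So $g_i\in\mathcal W_\omega(L)$. Theorem~\ref{theo:Wilsonradsetlineargroup} then gives $g_i\in R(L)$, and hence $L=R(L)$ is solvable. Thus $M$ is a normal locally solvable subgroup of $G$, so $M\le R_{\mathrm{L}\mathfrak S}(G)$.

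The main point to check is that restricting the second variable to $L$ preserves membership. It does, since the Wilson condition quantifies over all $h$. Consequently no finite-quotient reduction is needed, unlike in the proof of Theorem~\ref{theo:locally-finite}. The only delicate ingredient elsewhere is Step~1, where Zassenhaus's theorem is applied to $H\cap N$, a normal locally solvable subgroup of a linear group.
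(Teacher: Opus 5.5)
Your proof is correct and follows the paper's argument essentially step for step. You use Zassenhaus's theorem inside finitely generated linear subgroups to build $R_{\mathrm{L}\mathfrak S}(G)$ and to obtain $R_{\mathrm{L}\mathfrak S}(G)\subseteq S(G)$, Lemma~\ref{lem:locradislocsol} for the $\mathrm{L}\mathfrak R$-radical, and restriction to $\mathcal W_\omega(L)=R(L)$ via Theorem~\ref{theo:Wilsonradsetlineargroup} for the reverse inclusion; the only differences are cosmetic (pairwise products plus a directed union instead of the paper's generation of $N$ by the finitely many solvable normal subgroups $N\cap N_i$, and intersecting $\langle g,x\rangle$ with $R_{\mathrm{L}\mathfrak S}(G)$ instead of using $\langle g^{\langle x\rangle}\rangle$).
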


\begin{proof}
Let $\mathcal{N}$ denote the collection of locally solvable normal subgroups of $G$, and let $R$ be the subgroup of $G$ generated by all members of $\mathcal{N}$.
Clearly, the subgroup $R$ is normal in $G$. 
Let $X$ be a finite subset of $R$. Then there exist $N_1,\ldots,N_m\in\mathcal{N}$ such that $X$ is contained in a subgroup $N$ generated by finitely many elements of $\cup_{i=1}^{m}N_i.$

By the local linearity of $G$, the subgroup $N$ embeds into $\mathrm{GL}_n(\mathbb{F})$ for some field $\mathbb{F}$ and integer $n \geq 1$.    
For each $i$, the intersection $N\cap N_i$ is a locally solvable subgroup of a linear group over $\mathbb{F}$, and is therefore solvable by Zassenhaus's theorem. Note that $N=\bigl\langle N \cap N_i \mid 1 \leq i\leq m \bigr\rangle.$  Therefore $N$ is solvable, and hence $R$ is locally solvable. Consequently  $G$ admits the locally solvable radical with $R_{\textup{L}\mathfrak{S}}(G)=R.$       
Moreover, the class of locally linear groups is subgroup-closed, so $G$ is $R_{\mathrm{L}\mathfrak{S}}$-hereditary. On the other hand, in view of Lemma~\ref{lem:locradislocsol}, 
$R_{\mathrm{L}\mathfrak{S}}(G)=R_{\mathrm{L}\mathfrak{R}}(G).$ 

Next let $g \in R_{\LS}(G)$, so that its normal closure $\langle g^G \rangle$ is locally solvable. For any $x \in G$, the subgroup $\langle g, x \rangle$ is  linear. The normal closure of $g$ in $\langle g,x\rangle$ is $\langle g^{\langle x\rangle}\rangle$. Since
$\langle g^{\langle x\rangle}\rangle \leq \langle g^G\rangle,$ it is locally solvable and, being linear, is solvable by Zassenhaus's theorem.~Since the quotient $\langle g, x \rangle/\langle g^{\langle x \rangle} \rangle$ is cyclic, the solvability of $\langle g, x \rangle$ follows. This forces $g \in S(G)$, and hence $R_{\LS}(G) \subseteq S(G)$. Together with Lemma~\ref{lem:SsubseteqW}, this gives
$$R_{\LS}(G)
\subseteq
S(G)
\subseteq
\mathcal{W}_{\omega}(G).$$
It remains only to verify the inclusion $\mathcal{W}_{\omega}(G) \subseteq R_{\LS}(G)$.~In view of Lemma~\ref{lem:SsubseteqW}, this task reduces to proving that the subgroup $\langle \mathcal{W}_{\omega}(G) \rangle$ is locally solvable. Indeed, choose elements $g_1, \dots, g_m \in \mathcal{W}_\omega(G)$ and consider $$L = \left\langle g_1, \dots, g_m \right\rangle.$$ Local linearity of $G$ forces $L$ to be linear. For each $i$, we have $g_i\in\mathcal{W}_\omega(L)$, and Theorem~\ref{theo:Wilsonradsetlineargroup} yields $\mathcal{W}_\omega(L)=R(L)$. Consequently, $g_i\in R(L)$ for every $i$, and hence $L=R(L)$. Thus $L$ is solvable. Therefore $\langle\mathcal{W}_{\omega}(G)\rangle$ is locally solvable, and this completes the proof.
\end{proof}

We now apply this structural result to stable general linear groups. This requires examining the dimensionality properties of the underlying division~ring. Following Wehrfritz \cite[p.~325]{Wehrfritz94}, a division ring $D$
is called \emph{locally finite-dimensional} if every finite subset
$X\subseteq D$ lies in a division subring $S$ of $D$ such that
$[S:Z(S)]<\infty.$
Equivalently, for every finite subset $X\subseteq D$, the division
subring of $D$ generated by $X$ is finite-dimensional over its own
center (see \cite[Theorem~1, p.~228]{Jacobson1964}).

\begin{corollary} \label{cor:linearoverweaklocfinlocsolrad}
Let $D$ be a locally finite-dimensional division ring. Then the stable general linear group $\mathrm{GL}_\infty(D)$ is locally linear. In particular, every subgroup $G \leq \mathrm{GL}_\infty(D)$ is $R_{\textup{L}\mathfrak{S}}$-hereditary, and 
$$ R_{\textup{L}\mathfrak{S}}(G) = R_{\textup{L}\mathfrak{R}}(G) = S(G) = \mathcal{W}_{\omega}(G). $$
\end{corollary}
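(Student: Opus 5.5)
The plan is to reduce everything to Theorem~\ref{theo:locradislocsol}. The only real work is to show that $\mathrm{GL}_\infty(D)$ is locally linear. Local linearity passes to subgroups, so every $G\le\mathrm{GL}_\infty(D)$ is then locally linear. Theorem~\ref{theo:locradislocsol} will then give at once that $G$ is $R_{\mathrm{L}\mathfrak S}$-hereditary and that the four sets coincide for every $\omega\in\Omega$.

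To prove local linearity, let $H=\langle h_1,\dots,h_k\rangle$ be a finitely generated subgroup of $\mathrm{GL}_\infty(D)$. Since $\mathrm{GL}_\infty(D)$ is the directed union of the $\mathrm{GL}_n(D)$, there is an $n$ with all $h_j$ and $h_j^{-1}$ in $\mathrm{GL}_n(D)$. Let $X\subseteq D$ be the finite set of all matrix entries of the $h_j^{\pm1}$. Because $D$ is locally finite-dimensional, the division subring $S$ generated by $X$ satisfies $d:=[S:Z(S)]<\infty$. Every $h_j^{\pm1}$ has entries in $S$, so $H\le\mathrm{GL}_n(S)$. Here it matters that the inverses also have entries in $S$; this is why we put the entries of $h_j^{-1}$ into $X$.

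It remains to embed $\mathrm{GL}_n(S)$ into a matrix group over a field. Put $F=Z(S)$. Then $M_n(S)$ is an $F$-algebra of dimension $n^2d$, and left multiplication gives an injective $F$-algebra homomorphism $M_n(S)\to\mathrm{End}_F(M_n(S))\cong M_{n^2d}(F)$. Restricted to units, this yields a faithful embedding $\mathrm{GL}_n(S)\hookrightarrow\mathrm{GL}_{n^2d}(F)$. A more economical alternative is to view $S^n$ as an $F$-vector space of dimension $nd$, which gives $\mathrm{GL}_n(S)\hookrightarrow\mathrm{GL}_{nd}(F)$. Either way, $H$ is linear over the field $F$, so $\mathrm{GL}_\infty(D)$ is locally linear.

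I do not expect a serious obstacle. The one point that needs care is making sure that a single finite-dimensional division subring $S$ contains the entries of all the generators and of their inverses, so that $H\le\mathrm{GL}_n(S)$ rather than only $H\le\mathrm{GL}_n(D)$. Including the inverses' entries in $X$ handles this, and no argument about inverse-closure of $M_n(S)\cap\mathrm{GL}_n(D)$ is needed.
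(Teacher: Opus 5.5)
Your proposal is correct and follows essentially the same route as the paper. You place the generators in some $\mathrm{GL}_n(D)$, pass to the division subring $S$ generated by the entries of the $h_j^{\pm1}$, and embed $\mathrm{GL}_n(S)$ into $\mathrm{GL}_{nd}(Z(S))$ via the action on $S^n$; you then conclude from Theorem~\ref{theo:locradislocsol} using that local linearity passes to subgroups.
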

    
\begin{proof}
Let $H = \langle h_1, \dots, h_r \rangle \leq \mathrm{GL}_\infty(D)$. Since each $h_i$ lies in $\mathrm{GL}_{n_i}(D)$ for some integer $n_i$, it follows that $H \leq \mathrm{GL}_n(D)$ with $n = \max\{n_1, \dots, n_r\}$.

Let $L \subseteq D$ be the division subring generated by the entries of $h_i^{\pm 1}$ for $1 \le i \le r$. Then $H \leq \mathrm{GL}_n(L)$.
The local finite-dimensionality of $D$ implies that $L$ is
finite-dimensional over its center $\mathbb{F}=Z(L)$. Put
$[L:\mathbb{F}]=m<\infty$. It follows that $L^n$ is an
$\mathbb{F}$-vector space of dimension $nm$. Left multiplication
defines a faithful $\mathbb{F}$-linear action of
$\mathrm{GL}_n(L)$ on $L^n$: $\mathbb{F}$-linearity follows from
$\mathbb{F}=Z(L)$, while faithfulness follows by evaluating the action
on the standard basis of $L^n$. Hence, after choosing an
$\mathbb{F}$-basis of $L^n$, we obtain
$$H\leq \mathrm{GL}_n(L) \hookrightarrow \mathrm{GL}_{nm}(\mathbb{F}).$$
As a result, $H$ is linear over the field $\mathbb{F}$, and thereby the stable general linear group $\mathrm{GL}_\infty(D)$ is locally linear.
Moreover, every subgroup of a locally linear group is locally linear. By Theorem~\ref{theo:locradislocsol}, every subgroup $G\leq \mathrm{GL}_{\infty}(D)$ is
$R_{\mathrm{L}\mathfrak{S}}$-hereditary and satisfies
  $$R_{\mathrm{L}\mathfrak{S}}(G)  = R_{\mathrm{L}\mathfrak{R}}(G) = S(G) = \mathcal{W}_{\omega}(G).$$
This completes the proof.
\end{proof}

\begin{corollary} \label{cor:gl_n_locfin}
Let $D$ be a locally finite-dimensional division ring. Then every subgroup $G$ of $\mathrm{GL}_n(D)$ is $R_{\textup{L}\mathfrak{S}}$-hereditary, and satisfies 
$$ R_{\textup{L}\mathfrak{S}}(G) = R_{\textup{L}\mathfrak{R}}(G) = S(G) = \mathcal{W}_{\omega}(G). $$
\end{corollary}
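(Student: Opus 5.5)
The plan is to deduce this directly from Corollary~\ref{cor:linearoverweaklocfinlocsolrad}. Under the identification $A\mapsto \operatorname{diag}(A,1)$ fixed at the start of this section, $\mathrm{GL}_n(D)$ is a subgroup of $\mathrm{GL}_\infty(D)=\bigcup_{k\geq 1}\mathrm{GL}_k(D)$. Hence every subgroup $G\leq \mathrm{GL}_n(D)$ is also a subgroup of $\mathrm{GL}_\infty(D)$. Corollary~\ref{cor:linearoverweaklocfinlocsolrad} then applies verbatim: $G$ is $R_{\mathrm{L}\mathfrak S}$-hereditary, and for every $\omega\in\Omega$
$$R_{\mathrm{L}\mathfrak S}(G)=R_{\mathrm{L}\mathfrak R}(G)=S(G)=\mathcal W_\omega(G).$$

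The only thing to check is that this conclusion is intrinsic to $G$ and does not depend on the embedding. This holds because all four sets, and the hereditary property, are defined purely in terms of the abstract group $G$ and its subgroups. There is therefore no real obstacle.

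A more self-contained alternative is to verify directly that $G$ is locally linear and then invoke Theorem~\ref{theo:locradislocsol}. Take a finitely generated subgroup $H\leq G$. Its generators and their inverses have finitely many entries. These entries generate a division subring $L$ of $D$ with $[L:Z(L)]=m<\infty$, by local finite-dimensionality. Left multiplication on $L^n$ then embeds $H$ into $\mathrm{GL}_{nm}(Z(L))$, exactly as in the proof of Corollary~\ref{cor:linearoverweaklocfinlocsolrad}. So $G$ is locally linear, and Theorem~\ref{theo:locradislocsol} gives the same conclusion.
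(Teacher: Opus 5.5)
Your proof is correct and follows the paper's intended route: the paper states this corollary without a separate proof, as an immediate consequence of Corollary~\ref{cor:linearoverweaklocfinlocsolrad} via the identification $\mathrm{GL}_n(D)\leq\mathrm{GL}_\infty(D)$, exactly as in your first paragraph. Your alternative argument (direct local linearity followed by Theorem~\ref{theo:locradislocsol}) is simply the proof of that corollary restricted to $\mathrm{GL}_n(D)$.
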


The preceding corollaries assume that $D$ is locally finite-dimensional. For $\mathrm{GL}_{\infty}(D)$ itself, however, this assumption is unnecessary.

\begin{theorem} \label{theo:locsolradofstableistri} For a division ring $D$, the stable general linear group $\mathrm{GL}_\infty(D)$ has no non-trivial subnormal locally solvable subgroups. In~particular, $$R_{\mathrm{L}\mathfrak S}\bigl(\mathrm{GL}_\infty(D)\bigr)=1.$$ 
\end{theorem}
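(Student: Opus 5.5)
Let $\Gamma=\mathrm{GL}_\infty(D)$ and let $A$ be a non-trivial subnormal locally solvable subgroup, with a finite series $A=A_0\trianglelefteq A_1\trianglelefteq\cdots\trianglelefteq A_k=\Gamma$. The plan is to climb this series from the top. The key structural input is the classical fact that the elementary subgroup $E=E_\infty(D)=[\Gamma,\Gamma]$ is perfect and that every non-central normal subgroup of $\Gamma$ contains $E$; moreover $Z(\Gamma)=1$. Indeed, a central element would have to commute with all elementary matrices in every $\mathrm{GL}_n(D)$, while also having the form $\operatorname{diag}(B,1,1,\dots)$. This forces it to be the identity.

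\emph{First step.} Show by induction on $k$ that $A$ meets $E$ non-trivially and that the normal closure of a suitable piece of $A$ still contains a large elementary group. Concretely, I would pick $1\neq a\in A$, say $a\in\mathrm{GL}_n(D)$. Since $a\neq 1$ and $\Gamma$ is centerless, $a$ fails to commute with some elementary matrix $e$ supported in coordinates $\le 2n+2$. Then $c=[a,e]\in\langle A^{\Gamma}\rangle$. Better still, using the "far away" blocks, I would form the iterated commutators $[a,e_1],[[a,e_1],e_2],\ldots$ in the style of the standard Wielandt trick $[A_0,\Gamma,\dots,\Gamma]\le A_0$ ($k$ times). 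This produces non-trivial elements of $A$ that are transvection-like, lying in $E$.

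\emph{Second step, the key move.} Exploit stabilization: for $x\in A\cap\mathrm{GL}_n(D)$, the block matrix $\operatorname{diag}(x,x^{-1})$ is conjugate in $\Gamma$ to something in $E$, and conjugates of $x$ by permutation matrices moving its support into disjoint blocks commute with $x$. The plan is to show that the subgroup $\langle A^{\Gamma}\rangle$, which is locally solvable only if $A$ is normal, must contain $E_m(D)$ for large $m$. That would contradict local solvability, since $E_m(D)$ is non-solvable for $m\ge 3$; in fact it is perfect. To handle subnormality rather than normality, I would argue as in Lemma~\ref{lem:ascenlocsolsubgroupislocsol}: it suffices to show that each $A_i$ is locally solvable, going \emph{up} the series. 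That requires that a normal locally solvable subgroup $B\trianglelefteq C$ generate, together with its conjugates in $C$, a locally solvable group. So I would instead argue going \emph{down}. Suppose $A_{i+1}\ge E$ (true for $A_k=\Gamma$). Then $A_i\trianglelefteq A_{i+1}$ is normalized by $E$. A subgroup normalized by $E_\infty(D)$ is either central in $\Gamma$ (hence trivial) or contains $E$; this is the standard sandwich classification for $E$-normalized subgroups over division rings, obtained by the commutator computation of the first step, which produces a transvection in $A_i$ whose $E$-conjugates generate $E$. By induction $A=A_0\ge E$, contradicting local solvability since $E\ge E_3(D)$ is perfect and finitely generated subgroups of it are non-solvable.

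The main obstacle is the $E$-normalized classification: given $1\neq a\in A_i$ with $A_i$ normalized by $E$, produce a non-trivial elementary transvection in $A_i$. I would take $a\in\mathrm{GL}_n(D)$ and a transvection $t=t_{j,n+1}(\lambda)$ chosen with $at\neq ta$. Then $[a,t]=a t a^{-1} t^{-1}$ lies in $A_i$ and has the form $1+uv$ of rank one. A further commutator with a transvection in a fresh coordinate $n+2$ yields a genuine elementary transvection $t_{p,q}(\mu)\in A_i$. Finally, $E$-conjugates and commutators of one transvection generate all of $E$ via $[t_{pq}(\mu),t_{qr}(\nu)]=t_{pr}(\mu\nu)$, which gives $A_i\ge E$ and closes the induction. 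The consequence $R_{\mathrm{L}\mathfrak S}(\Gamma)=1$ is then immediate.
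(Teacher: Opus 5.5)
Your argument, in its final ``going down'' form, is correct, but it takes a different route from the paper's.

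\emph{The paper's route.} The paper stays at a finite level. For $1\neq a\in N\cap\mathrm{GL}_n(D)$ it fixes $m>n$ with $m\ge 3$, observes that $N\cap\mathrm{GL}_m(D)$ is subnormal in $\mathrm{GL}_m(D)$, and invokes the classical Dieudonn\'e-type theorem (Suprunenko's Theorem~II.10.1, with induction on the defect): a noncentral such subgroup contains $\mathrm{SL}_m(D)$. Since $\mathrm{SL}_m(D)$ is not locally solvable, $N\cap\mathrm{GL}_m(D)$ is central. Hence $a=\lambda I_m$, and the trailing diagonal entry $1$ forces $a=1$.

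\emph{Your route.} You work in $\Gamma$ itself with $E=E_\infty(D)$. You prove by hand that every non-trivial subgroup normalized by $E$ contains $E$, and push this down the series, using that ``normalized by $E$'' is exactly what $A_i$ inherits from $A_{i+1}\ge E$.

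\emph{What each buys.} The paper's proof is shorter, but it rests on the finite-dimensional normal-subgroup classification. For the induction on defect it really needs that classification in its form for subgroups normalized by $\mathrm{SL}_m(D)$. Yours is self-contained and uses only commutator calculus with transvections. It needs no central case, because fresh coordinates are always available in $\mathrm{GL}_\infty(D)$. It also proves more: every non-trivial subnormal subgroup of $\mathrm{GL}_\infty(D)$ contains $E_\infty(D)=[\Gamma,\Gamma]$.

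Two steps of your sketch should be stated precisely.

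\begin{itemize}
\item With $t=t_{j,n+1}(\lambda)$ you get $c=[a,t]=1+v\,e_{n+1}^{T}$, where $v=(a-1)e_j\lambda\neq 0$ is supported in the first $n$ coordinates. A further commutator with $t_{n+1,n+2}(\mu)$ only produces another transvection $1\pm v\mu\,e_{n+2}^{T}$ of the same shape. The right choice is $s=t_{n+2,k}(1)$ with $k\le n$ and $v_k\neq 0$. This gives $[s,c]=t_{n+2,n+1}(v_k)$, a non-trivial elementary transvection lying in $A_i$.
\item Perfectness of $E_3(D)$ alone does not contradict local solvability: McLain's groups $M(\mathbb{Q},F)$ are perfect and locally nilpotent. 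Use instead the finitely generated subgroup $\langle t_{ij}(1)\mid 1\le i\neq j\le 3\rangle$. It is perfect because $t_{ij}(1)=[t_{ik}(1),t_{kj}(1)]$, hence non-solvable.
\end{itemize}

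The exploratory first step can be discarded. Note also that the Wielandt inclusion you quote should read $[\Gamma,{}_kA_0]\le A_0$, not $[A_0,\Gamma,\dots,\Gamma]\le A_0$.
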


\begin{proof}
Let $N$ be a locally solvable subnormal subgroup of $\mathrm{GL}_\infty(D)$, and let $a \in N$. Then there exists an integer $n$ 
such that $a \in \mathrm{GL}_n(D)$. Choosing an integer $m > n$ with $m \geq 3$, 
the element $a$, when viewed inside $\mathrm{GL}_m(D)$, assumes the block-diagonal form
$$\begin{pmatrix} 
  a & 0 \\ 0 & I_{m-n} 
\end{pmatrix}. $$
Note that the intersection
$N_m = N \cap \mathrm{GL}_m(D)$
is a subnormal subgroup of $\mathrm{GL}_m(D)$. If $N_m$ is noncentral, then induction on its subnormal defect, together
with \cite[Theorem~II.10.1]{Suprunenko}, shows that
$$\mathrm{SL}_m(D)\leq N_m\leq N.$$ 
This would imply that $\mathrm{SL}_m(D)$ is locally  solvable, and hence so is its quotient $\mathrm{PSL}_m(D)$. This contradicts the nonabelian simplicity of $\mathrm{PSL}_m(D)$ established in \cite[Theorem~II.10.4]{Suprunenko}.  Consequently $N_m$ is central. Moreover, since $a \in N_m$, it follows that  $a = \lambda I_m$ for some $\lambda \in Z(D)^*$. Thus $\lambda = 1$, and hence $a = I_m$. Therefore $N = \{1\}$, and $R_{\textup{L}\mathfrak{S}}(\mathrm{GL}_\infty(D)) = \{1\}$.
\end{proof}

\begin{corollary} \label{cor:ascendant_trivial} 
    Let $D$ be a locally finite-dimensional division ring. Then $\mathrm{GL}_\infty(D)$ has no non-trivial ascendant locally radical subgroups. 
\end{corollary}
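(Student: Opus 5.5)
The plan is to combine Lemma~\ref{lem:ascenlocsolsubgroupislocsol} with Theorem~\ref{theo:locsolradofstableistri}. Put $G=\mathrm{GL}_\infty(D)$, and let $A$ be an ascendant locally radical subgroup of $G$. We must show that $A=1$.

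The first step is to reduce local radicality to local solvability. Corollary~\ref{cor:linearoverweaklocfinlocsolrad} says that $G$ is locally linear because $D$ is locally finite-dimensional. Every subgroup of a locally linear group is again locally linear. So $A$ is locally linear, and Lemma~\ref{lem:locradislocsol} then shows that $A$ is locally solvable.

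The second step passes to the normal closure. Corollary~\ref{cor:linearoverweaklocfinlocsolrad} also gives that $G$ is $R_{\mathrm{L}\mathfrak S}$-hereditary. Since $A$ is a locally solvable ascendant subgroup, Lemma~\ref{lem:ascenlocsolsubgroupislocsol} shows that $\langle A^G\rangle$ is locally solvable. This normal closure is normal, hence subnormal, in $G$. Theorem~\ref{theo:locsolradofstableistri} states that $G$ has no non-trivial subnormal locally solvable subgroups. Therefore $\langle A^G\rangle=1$, and so $A=1$.

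No step should be a genuine obstacle, since every ingredient is already stated. The only point needing care is the order of the reductions. Lemma~\ref{lem:ascenlocsolsubgroupislocsol} requires the subgroup to be locally solvable, not merely locally radical. For that reason the conversion through Lemma~\ref{lem:locradislocsol} must come first, and it uses the local linearity supplied by the hypothesis on $D$.
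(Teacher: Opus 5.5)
Your proposal is correct and matches the paper's proof step for step. Corollary~\ref{cor:linearoverweaklocfinlocsolrad} gives local linearity and $R_{\mathrm{L}\mathfrak S}$-heredity, Lemma~\ref{lem:locradislocsol} makes $A$ locally solvable, Lemma~\ref{lem:ascenlocsolsubgroupislocsol} makes $\langle A^G\rangle$ locally solvable, and Theorem~\ref{theo:locsolradofstableistri} forces $\langle A^G\rangle=1$. Your remark that the normal closure is in particular subnormal spells out a step the paper leaves implicit.
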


\begin{proof} Put $G=\mathrm{GL}_\infty(D). $ By Corollary~\ref{cor:linearoverweaklocfinlocsolrad}, the group $G$ is locally linear and $R_{\mathrm{L}\mathfrak S}$-hereditary. Let $A$ be an ascendant locally radical subgroup of $G$. Note that subgroups of locally linear groups are locally linear. Lemma~\ref{lem:locradislocsol} now implies that $A$ is locally solvable. By Lemma~\ref{lem:ascenlocsolsubgroupislocsol}, the normal closure $\langle A^G \rangle$ is locally solvable.  By Theorem~\ref{theo:locsolradofstableistri},
$\langle A^G\rangle=1.$
Hence $A=1$, and therefore $\mathrm{GL}_\infty(D)$ has no non-trivial ascendant locally radical subgroups. 
\end{proof}

\section{Groups with Nearly Modular Subgroup Lattice}
This section is devoted to proving that
  $$R_{\mathrm{L}\mathfrak S}(G)
  =
  R_{\mathrm{L}\mathfrak R}(G)
  =
  S(G)$$
for groups with nearly modular subgroup lattices.  Recall that a subgroup $M\leq G$ is called \emph{modular} in $G$ if $$ \left\langle A,M\cap B \right\rangle = \left\langle A,M \right\rangle \cap B $$ for all subgroups $A\leq B\leq G$. Thus modularity is a lattice-theoretic weakening of normality. A subgroup $H\leq G$ is called \emph{nearly modular} in $G$ if there exists a modular subgroup $M\leq G$ such that $$ H\leq M \text{ and } [M:H]<\infty. $$ 
We write $\operatorname{Sub}(G)$ for the lattice of all subgroups of $G$, ordered by inclusion, and call it \emph{modular}, respectively \emph{nearly modular}, if every subgroup of $G$ is modular, respectively nearly modular, in $G$.

\subsection{Restricted direct products} \hfill 

\medskip
We first record two elementary facts about direct products. Local solvability is preserved under restricted direct products, but may fail for unrestricted direct products.

 Let $(G_i)_{i \in I}$ be a family of groups. For $j \in I$, the \emph{canonical projection} onto the $j$-th factor is the epimorphism $\pi_j \colon \prod_{i \in I} G_i \twoheadrightarrow G_j$ given by $(x_i)_{i \in I} \mapsto x_j$. The \emph{support} of $x \in \prod_{i \in I} G_i$ is defined by 
$$ \operatorname{supp}(x) = \bigl\{ i \in I \mid \pi_i(x) \neq 1_{G_i} \bigr\}. $$ 
The \emph{restricted direct product} of $(X_i)_{i\in I}$ with $X_i\subseteq G_i$ is
$$ \bigoplus_{i \in I} X_i = \Bigl\{ x \in \prod_{i \in I} X_i \Bigm| \lvert\operatorname{supp}(x)\rvert < \infty \Bigr\}. $$

\begin{lemma}\label{lem:restricteddirectproductlocsolislocsol}
    For any family of locally solvable groups $(H_i)_{i \in I}$, the restricted direct product $\bigoplus_{i \in I} H_i$ is locally solvable.        
\end{lemma}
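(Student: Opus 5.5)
Let $P=\bigoplus_{i\in I}H_i$ and fix a finitely generated subgroup $K=\langle x_1,\dots,x_r\rangle\leq P$. To show that $P$ is locally solvable, it suffices to prove that $K$ is solvable. The plan is to confine $K$ to a finite direct product of finitely generated subgroups of the factors, and then use that a finite direct product of solvable groups is solvable.

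\emph{Step 1: finite support.} By definition of the restricted direct product, each $\operatorname{supp}(x_j)$ is finite. Hence $J=\bigcup_{j=1}^r\operatorname{supp}(x_j)$ is a finite subset of $I$. Every element of $K$ is a word in the $x_j^{\pm1}$, and componentwise multiplication shows that its support lies in $J$. So $K$ is contained in the subgroup $\bigoplus_{i\in J}H_i$, which we identify with the finite direct product $\prod_{i\in J}H_i$.

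\emph{Step 2: projections.} For each $i\in J$, put $K_i=\pi_i(K)=\langle \pi_i(x_1),\dots,\pi_i(x_r)\rangle$. This is a finitely generated subgroup of the locally solvable group $H_i$, so it is solvable, say of derived length $d_i$. The map $x\mapsto(\pi_i(x))_{i\in J}$ embeds $K$ into $\prod_{i\in J}K_i$, because an element of $K$ with all coordinates in $J$ trivial has empty support.

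\emph{Step 3: conclusion.} Derived subgroups of a direct product are computed coordinatewise: $\bigl(\prod_{i\in J}K_i\bigr)^{(d)}=\prod_{i\in J}K_i^{(d)}$. Setting $d=\max_{i\in J}d_i$, which is well defined since $J$ is finite, this product is trivial. Therefore $K$, as a subgroup of $\prod_{i\in J}K_i$, is solvable of derived length at most $d$.

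There is no serious obstacle. The one point needing care is the finiteness of $J$, which is what allows $d=\max_i d_i$. This step is exactly where the argument fails for unrestricted products, where the derived lengths $d_i$ may be unbounded.
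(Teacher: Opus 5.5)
Your proof is correct and takes the same route as the paper: you confine $K$ to the coordinates in the finite set $J=\bigcup_{j}\operatorname{supp}(x_j)$ and use finiteness of $J$. The one difference is that you spell out, via the projections $K_i=\pi_i(K)$ and $d=\max_{i\in J}d_i$, why $\prod_{i\in J}H_i$ is locally solvable, whereas the paper simply asserts this.
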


\begin{proof}
Put $ H = \bigoplus_{i\in I} H_i $, and let $ K = \langle a_1, \dots, a_n \rangle $ be a finitely generated subgroup of $ H $. Observe that each $ a_i $ has finite support. Thus, the index~set $$J = \bigcup_{i=1}^n \operatorname{supp}(a_i)$$ is  finite. Any element $a \in K$ is a word $w(a_1, \dots, a_n)$. Thus, for $i \notin J$,
$$
\pi_i(a) = w\bigl(\pi_i(a_1), \dots, \pi_i(a_n)\bigr) = w(1_{H_i}, \dots, 1_{H_i}) = 1_{H_i}.
$$
Therefore $\operatorname{supp}(a) \subseteq J$, and consequently 
$$K \subseteq \bigl\{h \in H \mid \pi_i(h)=1_{H_i}  \text{ for all } i \in I \setminus J \bigr\}.$$
Observe that the latter subgroup is naturally isomorphic to $ \prod_{j \in J} H_j $ via the map $x \mapsto(\pi_j(x))_{j \in J}.$ Since the product $\prod_{j \in J} H_j$ is locally solvable, the subgroup $K$ is solvable. Therefore $H$ is locally solvable, as required.
\end{proof}

\begin{Remark}
Local solvability need not be preserved under unrestricted direct products.
Indeed, let
$$\mathbf G=\prod_{n\geq 1}\mathbf S_{n,2},$$
where $\mathbf S_{n,2}=F_2/F_2^{(n)}$, with $F_2$ denoting the free group of
rank $2$. Thus $\mathbf S_{n,2}$ is the free solvable group of rank $2$ whose
derived length is exactly $n$.  For each $n\geq 1$, let $x_n$ and $y_n$ be free generators of
$\mathbf S_{n,2}$. Define
$$x=(x_n)_{n\geq 1},
\,
y=(y_n)_{n\geq 1},$$
and put $H=\langle x,y\rangle\leq \mathbf G.$ The restriction to $H$ of the projection onto the $n$th coordinate maps
$x$ and $y$ to $x_n$ and $y_n$, respectively, and is therefore surjective.
Hence $H$ is a subdirect product of the groups $\mathbf S_{n,2}$.
If $H$ is solvable, then every quotient $\mathbf S_{n,2}$ of $H$ is
solvable of derived length at most $\operatorname{dl}(H)$.  ~Hence
$n=\operatorname{dl}(\mathbf S_{n,2})
\leq \operatorname{dl}(H)$
for every $n\geq 1$, which is impossible.  Thus $H$ is a finitely generated
non-solvable subgroup of $\mathbf G$, and consequently $\mathbf G$ is not
locally solvable.
\end{Remark}

\begin{lemma}\label{prop:famlocsolrad}
For an arbitrary family of groups $(H_i)_{i\in I}$, the following statements hold:
    \begin{enumerate}[label = \textup{(\arabic*)}]
         \item  $S\bigl(\bigoplus_{i \in I}H_i\bigr)=\bigoplus_{i \in I}S(H_i).$         \item Suppose that $H_i$ admits the locally solvable radical for every $i \in I$. Then $\bigoplus_{i \in I}H_i$ also admits the locally solvable radical, and
        $$ R_{\textup{L}\mathfrak{S}}\biggl(\bigoplus_{i \in I}H_i\biggr) = \bigoplus_{i\in I} R_{\textup{L}\mathfrak{S}}(H_i). $$
        Moreover, the equality $$R_{\textup{L}\mathfrak{S}} \biggl(\bigoplus_{i \in I}H_i \biggr) = S\biggl(\bigoplus_{i \in I}H_i \biggr)$$ holds if and only if $R_{\textup{L}\mathfrak{S}}(H_i) = S(H_i)$ for every $i \in I$.
    \end{enumerate}
\end{lemma}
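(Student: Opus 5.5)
Put $H=\bigoplus_{i\in I}H_i$ and identify each $H_j$ with the coordinate subgroup $\{x\in H \mid \operatorname{supp}(x)\subseteq\{j\}\}$. Both parts rest on two facts. First, each $\pi_j$ restricts to a retraction $H\twoheadrightarrow H_j$. Second, a subgroup generated by finitely many elements of $H$ lies in a finite product $\prod_{j\in J}H_j$, as in the proof of Lemma~\ref{lem:restricteddirectproductlocsolislocsol}.

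For (1), take $g\in S(H)$ and $j\in I$, and let $y\in H_j$. Then $\pi_j(\langle g,y\rangle)=\langle \pi_j(g),y\rangle$ is a quotient of the solvable group $\langle g,y\rangle$. Hence $\pi_j(g)\in S(H_j)$, and since $g$ has finite support, $g\in\bigoplus_i S(H_i)$.

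Conversely, let $g\in\bigoplus_i S(H_i)$ and $x\in H$, and let $J=\operatorname{supp}(g)\cup\operatorname{supp}(x)$, which is finite. Then $\langle g,x\rangle$ embeds in $\prod_{j\in J}\langle \pi_j(g),\pi_j(x)\rangle$ via $z\mapsto(\pi_j(z))_{j\in J}$. This map is injective because $\operatorname{supp}(z)\subseteq J$ for all $z\in\langle g,x\rangle$. Each factor is solvable because $\pi_j(g)\in S(H_j)$, and a finite direct product of solvable groups is solvable. Thus $\langle g,x\rangle$ is solvable and $g\in S(H)$.

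For (2), put $R=\bigoplus_i R_{\LS}(H_i)$. It is normal in $H$, being the restricted product of normal subgroups, and it is locally solvable by Lemma~\ref{lem:restricteddirectproductlocsolislocsol}. It remains to show that every locally solvable normal subgroup $N\trianglelefteq H$ lies in $R$.

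The obstacle is that $N$ need not be a product of its intersections with the factors, so we work with projections. The image $\pi_j(N)$ is normal in $H_j$, since $\pi_j$ is surjective, and it is locally solvable as a homomorphic image of a locally solvable group. Hence $\pi_j(N)\le R_{\LS}(H_j)$. For $x\in N$ we have $\operatorname{supp}(x)$ finite and $\pi_j(x)\in R_{\LS}(H_j)$ for all $j$, so $x\in R$. Therefore $R$ is the largest normal locally solvable subgroup, and $R_{\LS}(H)=R$.

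The final equivalence follows by comparing the two descriptions. By (1) and the formula just proved, $R_{\LS}(H)=S(H)$ reads $\bigoplus_i R_{\LS}(H_i)=\bigoplus_i S(H_i)$. Projecting onto the $j$-th coordinate, using the retraction $\pi_j$ and the fact that both sides contain $H_j\cap(\cdot)$ as their $j$-th component, shows this holds if and only if $R_{\LS}(H_j)=S(H_j)$ for every $j$. The reverse direction is immediate.
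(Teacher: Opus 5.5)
Your proposal is correct and follows essentially the same route as the paper. Projections $\pi_j$ give $S(H)\subseteq\bigoplus_i S(H_i)$, and embedding $\langle g,x\rangle$ into the finite product $\prod_{j\in J}\langle\pi_j(g),\pi_j(x)\rangle$ gives the converse; the radical formula comes from $\pi_j(N)\le R_{\mathrm{L}\mathfrak{S}}(H_j)$ for every normal locally solvable $N$, and the final equivalence from coordinatewise comparison (you also justify why $\pi_j(N)$ is normal and locally solvable, which the paper leaves implicit).
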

    
\begin{proof} Put $H=\bigoplus_{i \in I}H_i$.

(1) Let $ h = (h_i)_{i \in I} \in S(H) $. Fix an index $ i \in I $ and an arbitrary element $ a \in H_i $, and let $ \sigma_i(a) $ denote its canonical embedding in $ G $. Since $ h \in S(H) $, the subgroup $ \langle h, \sigma_i(a) \rangle $ is solvable. Therefore, its image
$$
\pi_i\bigl( \langle h, \sigma_i(a) \rangle \bigr) = \langle h_i, a \rangle
$$
via the projection $\pi_i$
is also solvable. This immediately gives $ h_i \in S(H_i) $. Moreover, since $ h $ has finite support, it follows that $ h \in \bigoplus_{i \in I} S(H_i) $, and thereby the inclusion $ S(G) \subseteq \bigoplus_{i \in I} S(H_i) $ holds.

Now let $h=(h_i)_{i \in I}\in \bigoplus_{i \in I} S(H_i).$ Fix arbitrary $x=(x_i)_{i \in I}\in H$. Since both $h$ and $x$ have finite support, it follows that the set $$J=\operatorname{supp}(h)\cup\operatorname{supp}(x)$$ is finite. Thus, for each $j \in J$, the condition $h_j \in S(H_j)$ implies that the subgroup $\langle h_j,x_j\rangle$ solvable. On the other hand, a word in $h$ and $x$ is computed coordinatewise, and thereby 
$$\langle h,x\rangle \leq \bigoplus_{i \in I}\langle h_i, x_i \rangle.$$ 
Note that $x_i=h_i=1_{H_i}$ for all $i \in I \setminus J$.  Thus, the latter group is naturally isomorphic to $ \prod_{j \in J} \langle h_j, x_j \rangle $. The finiteness of $ J $ guarantees the solvability of this direct product, from which the solvability of $ \langle h,x \rangle $ immediately follows. Consequently $\bigoplus_{i \in I}S(H_i)\subseteq S(H)$.~Therefore
$$S\biggl(\bigoplus_{i \in I}H_i\biggr)=\bigoplus_{i \in I}S(H_i), \eqno{(*)}$$
and this completes part~\textup{(1)}.

\medskip

(2) According to Lemma~\ref{lem:restricteddirectproductlocsolislocsol}, the restricted direct product $ \bigoplus_{i \in I} R_{\LS}(H_i) $ is a normal locally solvable subgroup of $ \bigoplus_{i \in I}H_i $.
Now let $L \trianglelefteq \bigoplus_{i \in I}H_i$ be locally solvable. Since $\pi_i(L) \trianglelefteq R_{\LS}(H_i)$, it follows that
$L \leq \bigoplus_{i\in I} R_{\LS}(H_i)$. Therefore, the locally solvable radical of $H$ exists, and 
$$R_{\LS}\biggl(\bigoplus_{i \in I}H_i\biggr) = \bigoplus_{i\in I} R_{\LS}(H_i). \eqno{(* *)}$$

For the final assertion of part \textup{(2)}, the condition $R_{\textup{L}\mathfrak{S}}(H)=S(H)$, together with the equalities $(*)$ and $(* *)$, implies that
$$\bigoplus_{i\in I} R_{\textup{L}\mathfrak{S}}(H_i) = \bigoplus_{i\in I} S(H_i). $$
Projection onto $H_i$ yields $R_{\textup{L}\mathfrak{S}}(H_i)=S(H_i)$ for all $i\in I$. The converse is immediate, completing the proof of part~\textup{(2)}.
\end{proof}

\begin{lemma} \label{proplocradformula}
    Let $(H_i)_{i\in I}$ be family of groups. Then the following statements~hold:
  \begin{enumerate}[label = \textup{(\arabic*)}]
    \item If each $H_i$ is locally radical, then  $\bigoplus_{i \in I} H_i$ is locally radical.
    \item If each $H_i$ admits the $\textup{L}\mathfrak{R}$-radical, then the restricted direct product $\bigoplus_{i \in I}H_i$ also admits the $\textup{L}\mathfrak{R}$-radical, and
        $$ R_{\textup{L}\mathfrak{R}}\biggl(\bigoplus_{i \in I}H_i\biggr) = \bigoplus_{i\in I} R_{\textup{L}\mathfrak{R}}(H_i). $$
  \end{enumerate}
\end{lemma}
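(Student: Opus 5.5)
We treat part (1) exactly as in Lemma~\ref{lem:restricteddirectproductlocsolislocsol}, but at the level of finitely generated subgroups. Let $K=\langle a_1,\dots,a_n\rangle\le H=\bigoplus_{i\in I}H_i$, and put $J=\bigcup_k\operatorname{supp}(a_k)$, which is finite. As in that lemma, $K$ lies in the subgroup of elements supported on $J$, and this subgroup is naturally isomorphic to $\prod_{j\in J}H_j$.

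The projections $\pi_j(K)=\langle\pi_j(a_1),\dots,\pi_j(a_n)\rangle$ are finitely generated subgroups of the locally radical groups $H_j$, so each is radical. Moreover, $K$ embeds as a subdirect product into the finite direct product $\prod_{j\in J}\pi_j(K)$. It therefore suffices to check two closure properties of the class of radical groups: it is closed under finite direct products and under subgroups. For finite direct products, concatenate the ascending series: run a series of $\pi_{j_1}(K)$ inside the first factor, then a series of $\pi_{j_2}(K)$ on top, and so on. The factors are locally nilpotent and each term is normal in its successor. For subgroups, intersect an ascending series with the subgroup; the factors embed into the original factors and so remain locally nilpotent. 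Hence $K$ is radical, and $H$ is locally radical.

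For part (2), set $R=\bigoplus_{i}R_{\mathrm L\mathfrak R}(H_i)$. This subgroup is normal in $H$, and it is locally radical by part (1). Now let $L\trianglelefteq H$ be locally radical. We want to show $L\le R$. Take $x\in L$ with $x_i\ne 1$ for some $i$. The natural first idea is to argue that $\pi_i(L)$ is normal and locally radical in $H_i$. Normality is clear, but local radicality of an image requires closure under quotients. Finitely generated quotients of radical groups are radical, because images of locally nilpotent groups are locally nilpotent and images of ascending series remain ascending series. Hence $\pi_i(L)$ is locally radical and normal in $H_i$, so $\pi_i(L)\le R_{\mathrm L\mathfrak R}(H_i)$. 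Since $x$ has finite support, $x\in R$, and we conclude that $R$ is the largest normal locally radical subgroup.

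The main obstacle is the closure of radical groups under homomorphic images. Images of ascending series need care at limit ordinals, although unions are preserved by images. The condition $\bar G_\alpha\trianglelefteq\bar G_{\alpha+1}$ passes to images, and the factor $\bar G_{\alpha+1}/\bar G_\alpha$ is a quotient of $G_{\alpha+1}/G_\alpha$, hence locally nilpotent. I would also double-check the subgroup-closure step: for $K\le G$, the factor $(K\cap G_{\alpha+1})/(K\cap G_\alpha)$ embeds in $G_{\alpha+1}/G_\alpha$, and the union condition at limits is preserved by intersection. These standard closure facts are all that is needed. Alternatively, one could simply avoid $\pi_i(L)$ and instead note $[L,H_i]\le L\cap H_i$, but the projection argument is more direct.
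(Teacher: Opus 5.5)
Your proof is correct and takes essentially the paper's approach. Part (2) is the same projection argument the paper carries over from Lemma~\ref{prop:famlocsolrad}(2). In part (1) you prove directly, via finite supports and closure of radical groups under subgroups and finite direct products, what the paper cites from \cite[Proposition~1.2.15(iii)]{Dixon17}, and you also make explicit the quotient-closure of radical groups that the paper's ``carries over'' leaves implicit.

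Only your closing aside is off. The inclusion $[L,H_i]\le L\cap H_i\le R_{\mathrm{L}\mathfrak{R}}(H_i)$ by itself only shows that $\pi_i(L)$ is central modulo $R_{\mathrm{L}\mathfrak{R}}(H_i)$, so it is not a shortcut. Your main argument does not need it.
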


\begin{proof}
Part (1) follows from \cite[Proposition 1.2.15(iii)]{Dixon17}. The initial argument in the proof of Lemma~\ref{prop:famlocsolrad}(2) carries over to part (2) upon replacing locally solvable subgroups with locally radical ones.
\end{proof}

\subsection{The modular case} \hfill

\medskip

We now address the modular case. Here, the lattice condition provides enough structure to compute the relevant radicals~directly.
   
Recall that a \emph{Tarski group} is an infinite group in which every proper non-trivial subgroup is of prime order. An \emph{extended Tarski group} is a group $G$ such that $G/Z(G)$ is a Tarski group of exponent $p$ for some prime $p$, the center $Z(G)$ is cyclic of order $p^r >1$, and for every subgroup $H \leq G$, either $H \leq Z(G)$ or $H \geq Z(G)$ holds. The existence of such groups was proved by Ol'shanskii \cite{Olshanskii80,Olshanskii82,Olshanskii91} for all primes $p>10^{75}$.

\begin{theorem}\label{theo:modularlatticelocalsolrad}
    Let $G$ be a group whose subgroup lattice $\textup{Sub}(G)$ is modular. Then $G$ admits the locally solvable radical, and $$R_{\textup{L}\mathfrak{S}}(G) = R_{\textup{L}\mathfrak{R}}(G) = S(G).$$ 
\end{theorem}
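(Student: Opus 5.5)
The plan is to use the structure theorem for groups with modular subgroup lattice (Iwasawa \cite{Iwasawa41}, Schmidt \cite{Schmidt}): if $\operatorname{Sub}(G)$ is modular, then $G$ is either locally finite or has a specific structure. More precisely, either $G$ is a direct product $G=\bigoplus_{i\in I} H_i$ in which the factors are Tarski groups or extended Tarski groups for distinct primes, together with a factor that is locally finite with modular lattice, or $G$ is metabelian (for non-periodic $G$ with modular lattice, Iwasawa's theorem gives that $G$ is metabelian). In all cases the torsion-free and mixed cases reduce to $G$ being solvable, and hence $R_{\LS}(G)=R_{\textup{L}\mathfrak{R}}(G)=S(G)=G$ trivially. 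So the whole proof rests on the periodic case, where I will invoke Schmidt's description: $G=\bigoplus_{i\in I} H_i$ with $H_i$ of pairwise coprime orders, each $H_i$ either a locally finite group with modular lattice or a (extended) Tarski group.

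I will then apply Lemma~\ref{prop:famlocsolrad} and Lemma~\ref{proplocradformula}. These reduce the theorem to the factors: it suffices to show, for each $H_i$, that $R_{\LS}(H_i)$ and $R_{\textup{L}\mathfrak{R}}(H_i)$ exist and that $R_{\LS}(H_i)=R_{\textup{L}\mathfrak{R}}(H_i)=S(H_i)$.

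For a locally finite factor with modular lattice, each finitely generated subgroup is finite with modular lattice. Iwasawa's classification makes such a subgroup solvable, in fact metabelian-type. Hence $H_i$ is locally solvable, and the equality holds with all three sets equal to $H_i$; alternatively I can cite Theorem~\ref{theo:locally-finite}.

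For a Tarski group $T$, every two-generated subgroup is either cyclic or all of $T$. Since $T$ is simple, infinite and not solvable, I compute:
\begin{itemize}
\item $S(T)=1$, because for $g\neq1$ one can choose $x\notin\langle g\rangle$, and then $\langle g,x\rangle=T$;
\item $R_{\LS}(T)=R_{\textup{L}\mathfrak{R}}(T)=1$ by simplicity, since $T$ is itself not locally solvable and not locally radical, being two-generated and non-solvable, and having no locally nilpotent normal subgroup other than $1$.
\end{itemize}

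For an extended Tarski group $E$ with $Z=Z(E)$, the subgroup condition forces every subgroup either to lie in $Z$ or to contain it. Thus $\langle g,x\rangle$ is either inside $Z$, hence abelian, or contains $Z$ and maps onto a cyclic subgroup or onto all of $E/Z$. From this I get:
\begin{itemize}
\item for $g\in Z$, $\langle g,x\rangle$ is abelian-by-cyclic central, hence solvable;
\item for $g\notin Z$, choosing $x$ with $\langle gZ,xZ\rangle=E/Z$ gives $\langle g,x\rangle Z=E$, which is non-solvable;
\item so $S(E)=Z$, and $R_{\LS}(E)=R_{\textup{L}\mathfrak{R}}(E)=Z$, since any normal locally solvable subgroup maps to a normal subgroup of the simple non-solvable group $E/Z$ and must therefore be trivial there.
\end{itemize}

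The main obstacle I expect is matching the exact form of the Iwasawa–Schmidt structure theorem. In particular I need to ensure that the non-periodic case genuinely yields (locally) solvable groups, and that the periodic decomposition is a restricted direct product to which Lemma~\ref{prop:famlocsolrad} applies. If the non-periodic case only gives local solvability, I will instead use the fact that any locally solvable group $G$ satisfies $S(G)=G=R_{\LS}(G)$, and that it is locally radical.
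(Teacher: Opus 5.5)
Your proposal is correct and follows the paper's proof. Iwasawa's theorem makes the non-periodic case solvable, and Schmidt's decomposition into Tarski, extended Tarski and locally finite factors reduces the periodic case to the factors via Lemmas~\ref{prop:famlocsolrad} and~\ref{proplocradformula}.

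The only difference is in how the factors are handled. You compute them directly: $S=R_{\LS}=R_{\textup{L}\mathfrak{R}}$ equals $1$ for a Tarski group, $Z(E)$ for an extended Tarski group $E$, and all of $L$, since finite groups with modular lattice are solvable. The paper instead cites \cite[Theorem~3.4]{CNH2025} and Theorem~\ref{theo:locally-finite}. For $R_{\textup{L}\mathfrak{R}}(E)=Z(E)$, note explicitly that $E/Z(E)$ is finitely generated and not radical, hence not locally radical; non-solvability alone does not exclude this.
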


\begin{proof}
We first treat the case where $G$ is not torsion. By \cite[Theorems~5 and~6]{Iwasawa}, the torsion subgroup $T(G)$ and the quotient $G/T(G)$ are both abelian. In particular, $G$ is solvable, and
$$R_{\textup{L}\mathfrak{S}}(G) = R_{\textup{L}\mathfrak{R}}(G) = G = S(G).$$

We may now assume $ G $ to be torsion. In this case, \cite[Theorem]{Schmidt_Gruppenmit} guarantees that $ G $ decomposes as the restricted direct product 
$$G =  \bigoplus_{i \in I} T_i  \times  \bigoplus_{j \in J} E_j  \times L,$$
where the $ T_i $ are Tarski groups, the $ E_j $ are extended Tarski groups, $ L $ is a locally finite group, and elements from distinct direct factors have coprime orders.
Since Tarski groups and extended Tarski groups are Noetherian, their locally
solvable radicals, $\mathrm{L}\mathfrak{R}$-radicals, and solvable radicals
exist and coincide. For locally finite groups, Theorem~\ref{theo:locally-finite} establishes that the locally solvable radical coincides with the $\mathrm{L}\mathfrak{R}$-radical. Combining these observations with Lemmas~\ref{prop:famlocsolrad}
and~\ref{proplocradformula}, we conclude that the corresponding radicals
of $G$ exist and are given by $$R_{\textup{L}\mathfrak{S}}(G) = R_{\textup{L}\mathfrak{R}}(G) = \bigoplus_{i \in I} R(T_i) \times \bigoplus_{j \in J} R(E_j) \times R_{\textup{L}\mathfrak{S}}(L).$$ 
Moreover, \cite[Theorem~3.4]{CNH2025} gives
$R_{\mathrm{L}\mathfrak{S}}(T_i)=S(T_i)$ and
$R_{\mathrm{L}\mathfrak{S}}(E_j)=S(E_j)$ for all $i\in I$ and $j\in J$, respectively, while Theorem~\ref{theo:locally-finite} gives $R_{\mathrm{L}\mathfrak{S}}(L)=S(L).$
  An additional application of Lemma~\ref{prop:famlocsolrad} then implies that
\begin{align*}
R_{\textup{L}\mathfrak{S}}(G) =R_{\textup{L}\mathfrak{R}}(G)
&= \bigoplus_{i\in I} S(T_i) \times \bigoplus_{j\in J} S(E_j) \times S(L) \\
&= S(G),
\end{align*}
which completes the proof.
\end{proof}

\subsection{Lifting from the modular case} \hfill

\medskip
Passing from modular to nearly modular subgroup lattices requires a lifting argument for solvable-by-finite normal subgroups. Every solvable-by-finite group admits the solvable radical by \cite[Theorem~2.8]{CNH2025}. The main point is to control elements lying above the radical of the quotient through their conjugation action on the solvable radical of the kernel. The following theorem provides the required lifting statement.

\begin{theorem}\label{prop:main_tool}
Let $N$ be a solvable-by-finite normal subgroup of $G$, and let
  $\pi\colon G\twoheadrightarrow \overline{G}:=G/N$ be the canonical epimorphism. Assume that $R_{\mathrm{L}\mathfrak{S}}(\overline{G})$ exists and that $R_{\mathrm{L}\mathfrak{S}}(\overline{G})=S(\overline{G}).$ Set $$C=\left\{\,g\in G\mid [g,N]\leq R(N)\,\right\}.$$ Then the following assertions hold:
\begin{enumerate}[label = \textup{(\arabic*)}]
    \item $R(N) = C \cap N$.
    \item The group $G$ admits the locally solvable radical, and 
    $$ R_{\textup{L}\mathfrak{S}}(G) = C \cap \pi^{-1}(R_{\textup{L}\mathfrak{S}}(\overline{G}))=S(G). $$
    \item If, in addition, $\overline{G}$ admits the $\textup{L}\mathfrak{R}$-radical and $$ R_{\textup{L}\mathfrak{R}}(\overline{G})=R_{\textup{L}\mathfrak{S}}(\overline{G}), $$ then $G$ admits the $\textup{L}\mathfrak{R}$-radical and $ R_{\textup{L}\mathfrak{R}}(G) = R_{\textup{L}\mathfrak{S}}(G). $
\end{enumerate}
\end{theorem}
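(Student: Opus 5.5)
The plan is to exploit the finite quotient $F:=N/R(N)$. By \cite[Theorem~2.8]{CNH2025}, $R(N)$ exists, is solvable and characteristic in $N$ (hence normal in $G$), and $F$ is finite with $R(F)=1$. The set $C$ is the preimage in $G$ of the centralizer of $F$ in $G/R(N)$, so $C\trianglelefteq G$.

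For \textup{(1)}, $C\cap N$ is the preimage of $Z(F)$. Since $Z(F)$ is a solvable normal subgroup of $F$ and $R(F)=1$, we get $Z(F)=1$, hence $C\cap N=R(N)$.

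A fact used twice below is that every normal locally solvable subgroup $L$ of $N$ lies in $R(N)$. Indeed, $LR(N)/R(N)$ is a finite locally solvable, hence solvable, normal subgroup of $F$, so it is trivial.

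For \textup{(2)}, put $R:=C\cap\pi^{-1}\bigl(R_{\LS}(\overline G)\bigr)$, a normal subgroup of $G$.

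\emph{$R$ is locally solvable.} Let $K\leq R$ be finitely generated. Then $K\cap N\leq C\cap N=R(N)$ is solvable by \textup{(1)}. Also $KN/N$ is a finitely generated subgroup of $R_{\LS}(\overline G)$, so it is solvable. Hence $K$ is solvable.

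\emph{Every normal locally solvable $L\trianglelefteq G$ lies in $R$.} First, $\pi(L)$ is normal and locally solvable in $\overline G$, so $\pi(L)\leq R_{\LS}(\overline G)$. Next, $[L,N]\leq L\cap N$, and $L\cap N$ is normal and locally solvable in $N$, hence contained in $R(N)$ by the fact above. Thus $L\leq C$, and so $L\leq R$. Therefore $R_{\LS}(G)$ exists and equals $R$.

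\emph{$R\subseteq S(G)$.} Let $g\in R$ and $x\in G$, and set $H=\langle g,x\rangle$. The normal closure $\langle g^H\rangle$ lies in $R$. Its intersection with $N$ lies in $R(N)$, so it is solvable. Its image in $\overline G$ lies in $\langle\pi(g),\pi(x)\rangle$, which is solvable because $\pi(g)\in R_{\LS}(\overline G)=S(\overline G)$. Hence $\langle g^H\rangle$ is solvable. Since $H/\langle g^H\rangle$ is cyclic, $H$ is solvable.

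\emph{$S(G)\subseteq R$.} This is the main step. Let $g\in S(G)$. Then $\pi(g)\in S(\overline G)=R_{\LS}(\overline G)$, so it remains to show that $g$ centralizes $F$.

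Work in $M:=\langle g\rangle N/R(N)$. This is a finite-by-cyclic group with normal subgroup $F$, and the image $\tilde g$ of $g$ lies in $S(M)$ because $S$ passes to images of subgroups containing $g$. Let $k$ be the order of the automorphism of $F$ induced by $\tilde g$. Then $\tilde g^{\,k}$ centralizes $F$ and $\tilde g$, so $Z:=\langle\tilde g^{\,k}\rangle$ is central in $M=\langle\tilde g\rangle F$. Moreover $Z\cap F\leq Z(F)=1$.

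Thus $M'=M/Z$ is finite and contains a copy of $F$. The image of $\tilde g$ lies in $S(M')$, so by \cite[Theorem~1.1]{Guralnick06} it lies in $R(M')$. Now $R(M')\cap F$ is a solvable normal subgroup of $F$, hence trivial. Therefore $[R(M'),F]\leq R(M')\cap F=1$, so $\tilde g$ centralizes $F$, that is, $g\in C$.

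The delicate point is this reduction to a genuinely finite group in order to invoke the finite theorem. The passage to $M/\langle\tilde g^{\,k}\rangle$ is what makes it work.

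For \textup{(3)}, let $L\trianglelefteq G$ be locally radical. Then $\pi(L)\leq R_{\mathrm{L}\mathfrak R}(\overline G)=R_{\LS}(\overline G)$. Also $L\cap N$ is locally radical and locally (solvable-by-finite), hence locally solvable by Lemma~\ref{lem:locradislocsolonlocsolbyfinite}. By the fact above, $L\cap N\leq R(N)$, so as in \textup{(2)} we get $L\leq R$. Conversely, $R$ is locally solvable and hence locally radical. So $R_{\mathrm{L}\mathfrak R}(G)$ exists and equals $R=R_{\LS}(G)$.
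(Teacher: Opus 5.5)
Your proposal is correct and follows essentially the same route as the paper: the same candidate $C\cap\pi^{-1}(R_{\LS}(\overline G))$, and the same reduction of $S(G)\subseteq R_{\LS}(G)$ to the finite group $\langle\tilde g\rangle F/\langle\tilde g^{\,k}\rangle$, combined with \cite[Theorem~1.1]{Guralnick06} and the triviality of $R(F)$. The differences are minor: you prove directly from the finiteness of $N/R(N)$ that normal locally solvable subgroups of $N$ lie in $R(N)$, where the paper cites \cite[Lemma~2.5]{CNH2025}, and you use the normal closure $\langle g^H\rangle$ rather than $H\cap C$ to show $R\subseteq S(G)$.
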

    
\begin{proof}
(1)
The condition $[g,N]\leq R(N)$ is invariant under conjugation by elements
of $G$, and hence $C\trianglelefteq G$. Since $R(N)$ is characteristic in
$N$ and $N\trianglelefteq G$, we have $R(N)\trianglelefteq G$, and hence $R(N)\leq C\cap N.$ Conversely, let $n\in C\cap N$.~Then $[n,N]\leq R(N)$, so $nR(N)\in Z\bigl(N/R(N)\bigr).$  Moreover, $N/R(N)$ is semisimple, and hence
$Z\bigl(N/R(N)\bigr)=1.$ Therefore $n\in R(N)$, and so $C\cap N=R(N).$

\medskip
(2) Put  $J=C\cap \pi^{-1}(R_{\textup{L}\mathfrak{S}}(\overline{G}))$. Let $F$ be a finitely generated subgroup of $J$. Then $\pi(F)\leq R_{\textup{L}\mathfrak{S}}(\overline{G}),$ which implies that $\pi(F)$ is solvable. Moreover, part (1) ensures that the kernel of the restriction $\pi_{|_F}$ satisfies
$$\ker\pi_{|_F} = F \cap N \leq C \cap N = R(N).$$ 
Consequently, $F \cap N$ is solvable, and hence $F$ itself is solvable. In particular, $J$ is locally solvable. Now, let $L$ be an normal locally solvable subgroup of~$G$. According to \cite[Lemma~2.5]{CNH2025}, the subgroup $L \cap N$ is solvable, and thus $L \cap N \leq R(N) $. Since $L,N\trianglelefteq G$, we have $[L,N]\leq L\cap N\leq R(N),$ and hence $L\leq C$. Moreover, $\pi(L)\leq R_{\mathrm{L}\mathfrak S}(\overline G).$ By the definition of $J$, it follows that $L\leq J$. Therefore $J$ is the largest normal locally solvable subgroup of $G,$ and hence $R_{\textup{L}\mathfrak{S}}(G) = J.$

\medskip

To complete part~\textup{(2)}, it remains to prove that $S(G)=J$. We proceed in two steps.

\smallskip
\emph{Step 1.} Let $g \in S(G)$. Then $g \in \pi^{-1}(R_{\textup{L}\mathfrak{S}}(\overline{G})).$ Passing to the quotients $ \widetilde{G} = G/R(N)$ and $\widetilde{N} = N/R(N),$ and writing $ \widetilde{g} = gR(N)$, we obtain that $\widetilde{N}$ is a finite group with $R(\widetilde{N}) = \{1\}.$
Let $n \in N$, and let $\widetilde{n}$ be its image in $\widetilde{N}$. Since $g \in S(G)$, it follows that $\langle g, n \rangle$ is solvable, hence so is its image $\langle \widetilde{g}, \widetilde{n} \rangle$.

Now consider the subgroup $X=\langle \widetilde{N},\widetilde{g}\rangle.$ The finiteness of $\widetilde{N}$ implies that the conjugation action of $\widetilde{g}$ on $ \widetilde{N}$ has finite order, so there exists an integer $m \geq 1$ such that $\widetilde{g}^{\,m} $ centralizes $\widetilde{N}$. Put $Y = X / \langle \widetilde{g}^{\,m} \rangle$, and let $ \overline{g}$ denote the image of $ \widetilde{g}$ in $Y$. Since $\widetilde{g}$ normalizes $ \widetilde{N}$, every element of $X$ can be written in the form $\widetilde{n}\widetilde{g}^{\,k}$ for some $ \widetilde{n} \in \widetilde{N}$ and some $k \in \mathbb{Z}$. Let $\overline{N}$ denote the image of $\widetilde{N}$ in $Y$. The relation $\overline{g}^{\,m} = 1$ forces the order of $\overline{g}$ to divide $m$. Consequently $ \overline{g}$ generates a cyclic subgroup of order at most $m$, and hence $Y$ is finite. Note that the subgroup $\langle\widetilde{g},\widetilde{n}\rangle$ is solvable, and hence so is its image $\langle\overline{g},\overline{n}\rangle$. Moreover, the equality $\langle \overline{g}, \overline{n}\,\overline{g}^{\,k} \rangle = \langle \overline{g}, \overline{n} \rangle$ holds for all $k \in \mathbb{Z}$, thereby $\overline{g}\in S(Y)$. In view of \cite[Theorem~1.1]{Guralnick06},  $S(Y) = R(Y)$, and hence $\overline{g} \in R(Y)$. 

On the other hand, since $\widetilde{g}^{\,m}$ centralizes $\widetilde{N}$, the cyclic subgroup $ \langle \widetilde{g}^{\,m} \rangle $ is central in $X$. As a result, the kernel of the natural projection $\widetilde{N} \to \overline{N}$, namely $ \widetilde{N} \cap \langle \widetilde{g}^{\,m} \rangle $, is contained in $Z(\widetilde{N})$. The triviality of $ R(\widetilde{N})$ implies that $Z(\widetilde{N}) = 1$, and so $\widetilde{N} \cap \langle \widetilde{g}^{\,m} \rangle=1$. We thus obtain an isomorphism $\overline{N} \cong \widetilde{N}$, and hence $ R(\overline{N}) = 1$. This immediately yields
$$[R(Y), \overline{N}] \leq R(Y) \cap \overline{N} = 1.$$
Consequently $\overline{g}$ centralizes $\overline{N}$, forcing $[\widetilde{g}, \widetilde{N}] \leq \langle \widetilde{g}^{\,m} \rangle \cap \widetilde{N} = 1$. Lifting this relation back to $G$ yields $[g,N] \leq R(N)$, which confirms $S(G) \subseteq J$.

\smallskip
\emph{Step 2.}
Let $g \in J $ and fix an arbitrary element $x \in G$. 
Put $H=\langle g,x\rangle$.
The normality of $C$ in $G$ implies that $H \cap C \trianglelefteq H$ and the quotient $H/(H \cap C)$ is generated by the image of $x$. 
Since $ g \in J$, it follows that $\pi(g) \in  S(\overline{G})$.
Consequently $\pi(H) = \langle \pi(g), \pi(x) \rangle$ is solvable, and thereby its subgroup $\pi(H \cap C)$ is also solvable.
On the other hand,  part~\textup{(1)} implies that $$\ker\pi_{|_{H \cap C}} = H \cap C \cap N \leq R(N).$$
Thus $H \cap C \cap N$ is solvable. 
Since both $H\cap C\cap N$ and $$\pi(H\cap C) \cong \frac{H\cap C}{H\cap C\cap N}$$
are solvable, $H\cap C$ is solvable.  Hence $H$ is solvable, and so $J\subseteq S(G).$

\medskip
(3) Suppose that $\overline G$ admits the $\textup{L}\mathfrak{R}$-radical satisfying $R_{\textup{L}\mathfrak R}(\overline G)=R_{\textup{L}\mathfrak S}(\overline G)$, and put $K=C\cap \pi^{-1}\bigl(R_{\textup{L}\mathfrak R}(\overline G)\bigr)$. 
Together with part~\textup{(2)}, the equalities
$$R_{\textup{L}\mathfrak R}(\overline G) = S(\overline G) = R_{\textup{L}\mathfrak S}(\overline G)$$ yield $K = C\cap \pi^{-1}\bigl(R_{\textup{L}\mathfrak S}(\overline G)\bigr) = R_{\textup{L}\mathfrak S}(G).$ 
Thus $K$ is a normal locally solvable subgroup of $G$, and hence a normal locally radical subgroup. 
Therefore $$K\leq R_{\textup{L}\mathfrak R}(G).$$

Conversely, suppose $L$ is a normal locally radical subgroup of $G$. 
Since $L\cap N$ is a normal locally radical subgroup of $N$, we have
$$L\cap N\leq R_{\mathrm{L}\mathfrak{R}}(N).$$
Moreover, as $N$ is solvable-by-finite, \cite[Lemma~2.5 and Theorem~2.6]{CNH2025} gives
$$R_{\mathrm{L}\mathfrak{R}}(N)\leq R(N).$$
Therefore, $L\cap N\leq R(N).$ For $l\in L$ and $n\in N$, we have
$$[l,n]\in L\cap N\leq R(N),$$
and hence $L\leq C$. 
Moreover, $\pi(L)$ is a normal locally radical subgroup of $\overline{G}$, so $\pi(L)\leq R_{\mathrm{L}\mathfrak{R}}(\overline{G}).$ 
By the definition of $K$, we conclude that $L\leq K$.
Since $K$ contains every normal locally radical subgroup of $G$ and is itself such a subgroup, $K$ is the largest normal locally radical subgroup of $G$, which yields $K=R_{\textup{L}\mathfrak R}(G)$. 
Combining this with the equality $K=R_{\textup{L}\mathfrak S}(G),$ we immediately obtain
$R_{\textup{L}\mathfrak R}(G) = R_{\textup{L}\mathfrak S}(G),$ as desired.
\end{proof}

\subsection{The nearly modular case} \hfill

\medskip
We now specialize the preceding lifting criterion to groups with nearly modular subgroup lattices. We begin by stating the preservation properties of the lattice condition.

\begin{lemma}\label{lem:modularpreserve}
Let $G$ be a group with nearly modular subgroup lattice. Then the following assertions hold:
\begin{enumerate}[label=\textup{(\arabic*)}]
    \item If $S$ is a subgroup of $G$, then  $\operatorname{Sub}(S)$ is nearly modular.
    \item If $N$ is a normal subgroup of $G$, then $\operatorname{Sub}(G/N)$ is nearly modular.
\end{enumerate}
\end{lemma}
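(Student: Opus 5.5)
The plan is to reduce both assertions to two standard facts about modular subgroups: modularity passes to intersections with subgroups, and it passes to images under quotient maps. The finite-index bookkeeping will then follow from elementary index estimates.

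For \textup{(1)}, fix $S\leq G$ and a subgroup $H\leq S$. By hypothesis there is a modular subgroup $M$ of $G$ with $H\leq M$ and $[M:H]<\infty$. I claim that $M\cap S$ is modular in $S$. Given $A\leq B\leq S$, we have
$$\langle A, (M\cap S)\cap B\rangle=\langle A, M\cap B\rangle=\langle A,M\rangle\cap B,$$
using $B\leq S$ and the modularity of $M$ in $G$. Since $B\leq S$, we have $\langle A,M\rangle\cap B=\langle A,M\rangle\cap S\cap B$. Moreover, Dedekind-type modularity of $M$ with $A\leq S$ gives $\langle A,M\rangle\cap S=\langle A,M\cap S\rangle$. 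Hence the right-hand side equals $\langle A,M\cap S\rangle\cap B$, which proves the claim. Then $H\leq M\cap S$ and $[M\cap S:H]\leq[M:H]<\infty$, so $H$ is nearly modular in $S$.

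For \textup{(2)}, let $N\trianglelefteq G$ and $\pi\colon G\to G/N$ be the canonical map. A subgroup of $G/N$ has the form $H/N$ with $N\leq H$. Choose a modular subgroup $M\geq H$ of $G$ with $[M:H]<\infty$. I will show that $MN/N$ is modular in $G/N$; since $[MN:H]\leq [M:H]$, this gives the conclusion, because $H\leq MN$ and $MN=MH$.

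Since $N\leq H\leq M$, we have $MN=M$, so the question is whether $M/N$ is modular in $G/N$. This is just the lattice isomorphism between the subgroups of $G$ containing $N$ and $\operatorname{Sub}(G/N)$. Take $A/N\leq B/N$. Then
$$\langle A,M\cap B\rangle=\langle A,M\rangle\cap B$$
holds in $G$, and joins and meets of subgroups containing $N$ correspond exactly under this isomorphism. Hence $M/N$ is modular in $G/N$, and $[M/N:H/N]=[M:H]<\infty$.

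The main step needing care is the identity $\langle A,M\rangle\cap S=\langle A,M\cap S\rangle$ used in \textup{(1)}. It is exactly the defining modular identity applied with the pair $A\leq S$, so it is available directly from the definition. Everything else is routine index counting, so I expect no genuine obstacle beyond checking this lattice identity carefully.
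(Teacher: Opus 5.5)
Your proposal is correct and follows essentially the same route as the paper: $M\cap S$ serves as the modular overgroup in (1), and in (2) $M/N$ is modular via the correspondence between subgroups of $G$ containing $N$ and $\operatorname{Sub}(G/N)$. The differences are cosmetic. In (1) you apply the modular identity twice, to $A\leq B$ and to $A\leq S$, where the paper uses one sandwich of inclusions. In (2) you check directly that the correspondence preserves joins and meets, where the paper cites a reference for it.
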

    
\begin{proof}
(1) Let $K \leq S \leq G$. Since $\textup{Sub}(G)$ is nearly modular, the subgroup $K$ is nearly modular in $G$. Let $M$ be a modular subgroup of $G$ containing $K$ with $[M:K] < \infty$. Then $[M \cap S:K] \leq [M:K] < \infty.$ It remains to verify that $M \cap S$ is modular in $S$. For any subgroups $A \leq B \leq S$, the following inequalities hold: $$\langle A, M \cap B \rangle \leq \langle A, M \cap S \rangle \cap B \leq \langle A, M \rangle \cap B.$$ By the modularity of $M$ in $G$,  we have $\langle A, M \rangle \cap B = \langle A, M \cap B \rangle.$ Thus, $$\langle A, M \cap S \rangle \cap B  = \langle A, M \cap B \rangle = \langle A, M \cap S \cap B \rangle,$$ so the subgroup $M \cap S$ is modular in $S$. Therefore $\textup{Sub}(S)$ is nearly modular.

\medskip

(2)  Let $\overline K\leq G/N$. Then
$\overline K=K/N$ for some subgroup $K\leq G$ with $N\leq K$.
Since $\operatorname{Sub}(G)$ is nearly modular, there exists a modular subgroup
$M\leq G$ such that $K\leq M$ and $[M:K]<\infty.$
In view of \cite[Proposition 7.1.5]{Humphreys}, the natural
lattice isomorphism
    $$[N,G]\rightarrow \operatorname{Sub}(G/N), \, X\mapsto X/N,$$ preserves modular elements. 
    Hence $M/N$ is modular in $G/N$. Moreover 
    $$K/N\leq M/N \text{ and } [M/N:K/N]=[M:K]<\infty.$$
Thus $K/N$ is nearly modular in $G/N$, making $\textup{Sub}(G/N)$ nearly modular.
\end{proof}

We next invoke Neumann's finiteness criterion, which converts a lattice-theoretic condition into a group-theoretic constraint on the derived subgroup. Recall that a subgroup $H \leq G$ is \emph{nearly normal} if $[H^G:H] < \infty$.

\begin{proposition}[{\cite[Theorem 13.1]{Neumann_groupswithfiniteclass}}]\label{theo:everysubgroupisnearlynormal}
    Let $G$ be a group. Every subgroup of $G$ is nearly normal if and only if the derived subgroup $[G,G]$ is finite. 
\end{proposition}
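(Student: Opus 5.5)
The plan is to prove the two implications separately. The implication ``$[G,G]$ finite $\Rightarrow$ every subgroup nearly normal'' is elementary. The converse is the substantial part, and I would route it through bounded conjugacy classes (the BFC property) and Neumann's theorem that a BFC-group has finite derived subgroup.

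\emph{Easy direction.} Assume $G'=[G,G]$ is finite and let $H\leq G$. For $h\in H$ and $g\in G$ we have $h^g=[g,h]h\in G'H$. Since $G'\trianglelefteq G$, the product $HG'$ is a subgroup, and hence $H^G\leq HG'$. Therefore
$$[H^G:H]\leq [HG':H]\leq |G'|<\infty,$$
and even with a uniform bound.

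\emph{Hard direction, step 1: a uniform bound.} Assume every subgroup is nearly normal. First I would show that there is an integer $n$ with $[H^G:H]\leq n$ for every $H\leq G$. I would argue by contradiction. Suppose the indices are unbounded. Then I would construct inductively a sequence of finitely generated subgroups $H_1\leq H_2\leq\cdots$ with $[H_k^G:H_k]\geq k$. At each stage I would adjoin finitely many new elements, chosen so that the index of $H=\bigcup_k H_k$ in its normal closure cannot collapse back to something finite. This would contradict near normality of $H$. Making the choices compatible, so that enlarging $H_k$ does not shrink the index already achieved, is where most of the work lies, and I expect this diagonal construction to be the main obstacle. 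My first attempt would be to choose the new elements inside subgroups of $G$ that are disjoint from or independent of $H_k^G$, using the failure of a bound on subgroups of $G$ modulo the finitely many constraints already imposed.

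\emph{Hard direction, step 2: bounded classes.} With the uniform bound $n$ in hand, take $x\in G$. Then $\langle x\rangle$ has index at most $n$ in $\langle x^G\rangle$, so $\langle x^G\rangle$ is cyclic-by-finite with a bounded index. If $x$ has finite order, the core of $\langle x\rangle$ in $\langle x^G\rangle$ has bounded index, and this bounds the number of conjugates of $x$. If $x$ has infinite order, the core $\langle x^{k}\rangle$, with $k\leq n!$, is normal in $G$. Hence $G$ acts on it through a group of order at most $2$, and each conjugate of $x$ lies in one of at most $n$ cosets of $\langle x^k\rangle$. This should pin $x^G$ down to boundedly many elements. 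Thus $G$ is a BFC-group, and Neumann's theorem then gives that $G'$ is finite.
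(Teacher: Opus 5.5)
The paper gives no proof of this statement; it quotes it from Neumann. Your easy direction is correct: with the paper's conventions $h^g=[g,h]h$, so $H^G\leq HG'$ and $[H^G:H]\leq|G'|$. The converse, however, has a genuine gap.

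\textbf{Step 1.} This step carries essentially all of the content of the theorem, and you supply no mechanism for it. The naive diagonal argument really does fail. Write $S_3^{(i)}$ for the $i$-th factor of $G=\bigoplus_{i\geq1}S_3$ and let $\tau_i$ be a transposition in it. The finite subgroups $H_k=S_3^{(1)}\times\cdots\times S_3^{(k)}\times\langle\tau_{k+1}\rangle\times\cdots\times\langle\tau_{2k}\rangle$ form an ascending chain with $[H_k^G:H_k]=3^k$. Yet $\bigcup_k H_k=G$ is normal. So you need an independence device that stops later generators from absorbing the commutators produced earlier. In an arbitrary group there is no reason that elements ``independent of $H_k^G$'' exist, or that unboundedness survives the constraints already imposed.

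A natural source of such a device is to prove first, with no bound at all, that $G$ is an FC-group:
\begin{itemize}
\item[(a)] For $x$ of finite order, $\langle x^G\rangle$ is finite.
\item[(b)] For $x$ of infinite order, $N=\langle x^G\rangle$ is finitely generated. So $N$ has only finitely many subgroups of index $[N:\langle x\rangle]$, every $\langle x^g\rangle$ is one of them, and each has just two generators.
\end{itemize}
Then centralizers of finite sets have finite index. Hence successive pairs $(a_i,b_i)$ with $[a_i,b_i]\neq1$ can be chosen to commute with all earlier ones. The real work of Neumann's theorem is turning such a configuration into a subgroup of infinite index in its normal closure when $G'$ is infinite. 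This requires controlling the central amalgamation between the pieces $\langle a_i,b_i\rangle$, and it is absent from your proposal.

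\textbf{Step 2, infinite-order case.} Your claim here is false as stated. The core of $\langle x\rangle$ in $N=\langle x^G\rangle$ is normal in $N$, but not necessarily in $G$. Take $G=(\mathbb Z\times\mathbb Z/2)\rtimes\langle\sigma\rangle$ with $\sigma(m,\varepsilon)=(m,\varepsilon+m)$. Here $|G'|=2$, so every subgroup is nearly normal. For $x=(1,0)$ one gets $N=\mathbb Z\times\mathbb Z/2$ with core $\langle x\rangle$, yet $x^\sigma=(1,1)\notin\langle x\rangle$. This is repairable: intersect the $G$-conjugates of the core. They are subgroups of a fixed bounded index in the at most $n$-generated group $N$, hence there are boundedly many of them.

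\textbf{Step 2, finite-order case.} This case is not argued at all. The bound $|N|\leq n|x|$ is not uniform. Bounding $|x^G|$ requires bounding the group of automorphisms that $N_G(\langle x\rangle)$ induces on $\langle x\rangle$. That group can a priori be large and would have to be controlled by a further use of the bound $n$.
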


\begin{lemma}\label{lem:nonpernearmodularlatfinabel}
Let $G$ be a non-torsion group with nearly modular subgroup lattice. Then the following assertions hold:
\begin{enumerate}[label = \textup{(\arabic*)}]
    \item The set of torsion elements $T(G)$ is a characteristic finite-by-abelian subgroup of $G$.
    \item The quotient group $G/T(G)$ admits the locally solvable radical, and 
    $$ R_{\textup{L}\mathfrak{S}}\bigl(G/T(G)\bigr) = R_{\textup{L}\mathfrak{R}}\bigl(G/T(G)\bigr) = S\bigl(G/T(G)\bigr) = Z\bigl(G/T(G)\bigr). $$
\end{enumerate}
\end{lemma}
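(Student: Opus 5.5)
We will use the known structure theory of non-periodic groups with nearly modular subgroup lattice due to de Giovanni and Musella \cite{Giovanni}, together with Proposition~\ref{theo:everysubgroupisnearlynormal}. The basic fact we plan to invoke is that a non-periodic group with nearly modular subgroup lattice contains a finite normal subgroup $F$ such that $\operatorname{Sub}(G/F)$ is modular. By Lemma~\ref{lem:modularpreserve}(2), this quotient again has nearly modular lattice. Since $G$ is non-torsion, so is $G/F$, because $F$ is finite.

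For part (1), the first step is to apply \cite[Theorems~5 and~6]{Iwasawa} to the non-periodic group $G/F$ with modular lattice. This gives that the torsion elements of $G/F$ form an abelian subgroup $T(G/F)$ with abelian quotient. Since $F$ is finite, an element $g\in G$ is torsion exactly when $gF$ is torsion. Hence $T(G)$ is the full preimage of $T(G/F)$, and in particular it is a subgroup. It is characteristic, being defined by element orders, and it is finite-by-abelian because $T(G)/F$ is abelian and $F$ is finite.

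For part (2), put $\overline G=G/T(G)$. This is a torsion-free group, and $\operatorname{Sub}(\overline G)$ is nearly modular by Lemma~\ref{lem:modularpreserve}(2). The key claim is that $\overline G$ is abelian. We would argue this in two steps.

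\begin{enumerate}[label=\textup{(\alph*)}]
\item $\overline G$ is metabelian. Indeed, $\overline G$ is a quotient of $G/F$ by the image of $T(G)$, and $(G/F)/T(G/F)$ is abelian by Iwasawa. So $\overline G$ is already abelian by the previous step, provided the identification $T(G)/F=T(G/F)$ holds.
\item To be safe, we would also double-check this via Proposition~\ref{theo:everysubgroupisnearlynormal}. In a torsion-free group with nearly modular lattice, every modular subgroup is permutable, and permutable subgroups are nearly normal (results of \cite{Giovanni}). Hence $[\overline G,\overline G]$ is finite, and being torsion-free it is trivial.
\end{enumerate}

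With $\overline G$ abelian, every subgroup of $\overline G$ is abelian, so $\overline G$ is itself locally solvable and locally radical. Therefore
$$R_{\mathrm{L}\mathfrak S}(\overline G)=R_{\mathrm{L}\mathfrak R}(\overline G)=S(\overline G)=\overline G=Z(\overline G).$$

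The main obstacle is pinning down the precise citation for the structure theorem of \cite{Giovanni}, namely the finite normal $F$ with modular quotient lattice in the non-periodic case. If that theorem is only available in the form ``$G$ is finite-by-(modular lattice)'', the argument above goes through unchanged. If instead only near normality is available, the fallback is route (b) via Neumann's criterion.
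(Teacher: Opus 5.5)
There is a genuine gap at the step you flagged, and it cannot be fixed by finding the right citation. The lemma assumes only that $G$ is non-periodic with $\operatorname{Sub}(G)$ nearly modular. The ``finite-by-(modular lattice)'' theorem you want is available only under extra hypotheses, such as local gradedness, or periodicity (which is how Falco's theorem enters Theorem~\ref{thm:target_3}). For arbitrary non-periodic groups it is false, and so is your conclusion that $G/T(G)$ is abelian.

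Ol'shanskii constructed a torsion-free group $G$ with the following properties: every proper subgroup is cyclic, $Z=Z(G)$ is infinite cyclic, and $G/Z$ is a Tarski group of prime exponent $p$. In this group:
\begin{enumerate}
\item[(i)] every maximal subgroup $M$ contains $Z$, and two distinct ones meet exactly in $Z$;
\item[(ii)] a proper subgroup $H\not\le Z$ has finite index in the maximal subgroup $HZ$, and nontrivial subgroups of $Z$ have finite index in the normal subgroup $Z$;
\item[(iii)] each maximal $M$ is modular. If $A\le B\le M'\neq M$ with $A\not\le Z$, then $\langle A,M\cap B\rangle=\langle A,B\cap Z\rangle$ and $\langle A,M\rangle\cap B$ both equal $B$, since $B\cap Z$ has index $p$ in the cyclic group $B$. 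The remaining cases are immediate.
\end{enumerate}
So $\operatorname{Sub}(G)$ is nearly modular and $T(G)=1$, yet $G=G/T(G)$ is not abelian, indeed not solvable.

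Since $G$ has no nontrivial finite normal subgroup, an $F$ as in your first paragraph would make $\operatorname{Sub}(G)$ modular. The Iwasawa theorem you quote would then force $G$ to be abelian, so no such $F$ exists. Your route (b) fails on the same group. The subgroups $HZ$ are modular but not permutable, since the image of $HZ\cdot H'Z$ in $G/Z$ has $p^2$ elements. They are also not nearly normal, having infinite index in their normal closure $G$. Hence Neumann's criterion does not apply. In this example $S(G)=R_{\mathrm{L}\mathfrak S}(G)=Z$, which is exactly why the lemma ends with $Z(G/T(G))$ rather than $G/T(G)$.

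The paper avoids any global structure theorem and works locally.
\begin{itemize}
\item[(1)] It gets that $T(G)$ is a subgroup from \cite[Proposition~1]{Giovanni}. It then uses \cite[Corollary~9]{Giovanni}: in a non-periodic group with nearly modular lattice, every periodic subgroup is nearly normal. This passes to $T(G)$, and Proposition~\ref{theo:everysubgroupisnearlynormal} gives $[T(G),T(G)]$ finite.
\item[(2)] Put $Q=G/T(G)$. The paper applies \cite[Theorem~10]{Giovanni} (torsion-free, locally graded, nearly modular implies abelian) only to locally graded subgroups. Applied to the solvable group $\langle a,x\rangle$ for $a\in S(Q)$, it gives $S(Q)\le Z(Q)$. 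Applied to a normal locally radical $L$, and then to $\langle b,y\rangle$ with $b\in L$, it gives $L\le Z(Q)$. Here $\langle b,y\rangle$ is solvable because $\langle b^{\langle y\rangle}\rangle\le L$ is abelian.
\end{itemize}
This yields all four equalities without ever knowing that $Q$ is abelian. Your proof of (2) should be rebuilt along these local lines. Your proof of (1) likewise needs the Corollary~9 argument, because the $F$ it relies on need not exist.
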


\begin{proof} (1)
Since $\textup{Sub}(G)$ is nearly modular, every cyclic subgroup of $G$ is nearly modular. According to \cite[Proposition~1]{Giovanni}, the set $T(G)$ of torsion elements in $G$ is a characteristic subgroup of $G$. 

Let $H$ be a subgroup of $T(G)$. Since $H$ is torsion and $G$ is non-torsion, \cite[Corollary~9]{Giovanni} now implies that $H$ is nearly normal in $ G $, i.e.
$[H^G : H] < \infty.$ Moreover, as $ H \leq H^{T(G)} \leq H^G $, it immediately follows from \cite[1.7.11]{Scott2010} that $$[H^{T(G)} : H] \leq [H^G : H] < \infty.$$  Consequently, every subgroup of $ T(G) $ is nearly normal in $ T(G) $. In view of Proposition \ref{theo:everysubgroupisnearlynormal}, the derived subgroup $ [T(G),T(G)] $ is finite, or equivalently, that $ T(G) $ is finite-by-abelian. 

\medskip
(2) Put $Q = G/T(G)$. We claim that $S(Q)=Z(Q)$. Indeed, let $a\in S(Q)$ and $x\in Q \setminus \{1\}$, and put $H=\langle a,x\rangle$. Then $H$ is a finitely generated solvable subgroup of $Q$. Moreover $H$ is torsion-free, and its subgroup lattice is nearly modular by Lemma~\ref{lem:modularpreserve}. According to \cite[Theorem~10]{Giovanni}, $H$ is abelian. In particular, $ax=xa$, from which $a\in Z(Q)$.~Therefore $S(Q)\subseteq Z(Q)$.

It remains to determine $R_{\mathrm{L}\mathfrak R}(Q)$. Let $L$ be a
nontrivial normal locally radical subgroup of $Q$. By Lemma~\ref{lem:modularpreserve}, the lattice $\operatorname{Sub}(L)$ is nearly modular. Moreover, every locally radical group is locally graded (see, for example, \cite[comments preceding Corollary~4.3.9]{Dixon17}).
Therefore, \cite[Theorem~10]{Giovanni} implies that $L$ is abelian.
Now let $b \in L$ and $y \in Q$, and put $K = \langle b, y \rangle$. The normal closure $N = \langle b^{\langle y \rangle} \rangle$ lies in $L$. Since $N \le L$ and $K/N$ is cyclic, it follows that $K$ is solvable, and thereby $L \le Z(Q)$. Moreover, since every locally solvable group is locally radical, we have
$R_{\mathrm{L}\mathfrak S}(Q)
\leq
R_{\mathrm{L}\mathfrak R}(Q).$
Conversely, the preceding argument shows that every normal locally radical
subgroup of $Q$ is abelian, and hence locally solvable. Therefore,
$$R_{\mathrm{L}\mathfrak R}(Q) \leq R_{\mathrm{L}\mathfrak S}(Q).$$
Combining these inclusions, we obtain
$R_{\mathrm{L}\mathfrak S}(Q) =R_{\mathrm{L}\mathfrak R}(Q) = Z(Q).$
\end{proof}

We now state the main result of this section.

\begin{theorem}\label{thm:target_3}
    Let $G$ be a group whose lattice of subgroups $\textup{Sub}(G)$ is nearly modular. Then $G$ admits the locally solvable radical, and 
    $$ R_{\textup{L}\mathfrak{S}}(G) = R_{\textup{L}\mathfrak{R}}(G) = S(G). $$
\end{theorem}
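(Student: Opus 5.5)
Split into the non-torsion and torsion cases, and in each case feed a suitable normal subgroup $N$ into the lifting result, Theorem~\ref{prop:main_tool}.

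\emph{Non-torsion case.} Take $N=T(G)$. By Lemma~\ref{lem:nonpernearmodularlatfinabel}(1), $N$ is characteristic and finite-by-abelian. A finite-by-abelian group is solvable-by-finite: its derived subgroup $[N,N]$ is finite, and $C_N([N,N])$ has finite index in $N$ and is nilpotent of class at most $2$. By Lemma~\ref{lem:nonpernearmodularlatfinabel}(2), the quotient $\overline G=G/T(G)$ admits both radicals, and
$$R_{\mathrm{L}\mathfrak S}(\overline G)=R_{\mathrm{L}\mathfrak R}(\overline G)=S(\overline G).$$
Theorem~\ref{prop:main_tool}(2) then shows that $R_{\mathrm{L}\mathfrak S}(G)$ exists and equals $S(G)$. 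Theorem~\ref{prop:main_tool}(3) gives $R_{\mathrm{L}\mathfrak R}(G)=R_{\mathrm{L}\mathfrak S}(G)$.

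\emph{Torsion case.} Here I would use the structure theory of de Giovanni--Musella \cite{Giovanni} for periodic groups with nearly modular subgroup lattice. The expected statement is that such a $G$ has a finite normal subgroup $N$ with $\operatorname{Sub}(G/N)$ modular. This is the analogue for nearly modular lattices of the Neumann-type results already used.

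Given such an $N$, the quotient $G/N$ has modular subgroup lattice. Theorem~\ref{theo:modularlatticelocalsolrad} then says that $G/N$ admits both radicals and that
$$R_{\mathrm{L}\mathfrak S}(G/N)=R_{\mathrm{L}\mathfrak R}(G/N)=S(G/N).$$
A finite group is trivially solvable-by-finite, so Theorem~\ref{prop:main_tool}(2),(3) applies to $N$ directly. It yields that $R_{\mathrm{L}\mathfrak S}(G)$ exists and
$$R_{\mathrm{L}\mathfrak S}(G)=R_{\mathrm{L}\mathfrak R}(G)=S(G).$$

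\emph{Main obstacle.} The hard step is securing the finite normal subgroup $N$ with modular quotient in the periodic case. This means locating the exact theorem in \cite{Giovanni}, or deriving it from the non-torsion corollaries there together with Lemma~\ref{lem:modularpreserve}(2).

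If the published result only gives a finite-by-(modular lattice) structure up to some further normal subgroup, I would fall back on a weaker input. It suffices to find any solvable-by-finite normal $N$ such that $G/N$ has modular subgroup lattice; Theorem~\ref{prop:main_tool} only needs $N$ solvable-by-finite. For example, a normal subgroup that is finite-by-abelian or finite-by-locally-finite-solvable may be what the literature provides, and the lifting argument goes through unchanged.

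The remaining checks are routine:
\begin{itemize}
\item The hypotheses of Theorem~\ref{prop:main_tool} hold.
\item $R(N)$ exists, by \cite[Theorem~2.8]{CNH2025}.
\item Modularity passes to the quotient, by Lemma~\ref{lem:modularpreserve}(2).
\end{itemize}
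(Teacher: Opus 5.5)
Your proposal is correct and follows the paper's proof essentially verbatim: in the non-torsion case you take $N=T(G)$ via Lemma~\ref{lem:nonpernearmodularlatfinabel}, in the torsion case you take a finite normal $N$ with $\operatorname{Sub}(G/N)$ modular and apply Theorem~\ref{theo:modularlatticelocalsolrad} to $G/N$, and in both cases you conclude with Theorem~\ref{prop:main_tool}(2),(3). The only correction concerns the source: the paper gets the finite-by-(modular lattice) result for periodic groups, which you flag as your main obstacle, from a theorem of Falco \cite{Falco2003}, not from \cite{Giovanni}.
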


\begin{proof}
We claim that $G$ has a normal solvable-by-finite  subgroup $N$ such~that
$$R_{\mathrm{L}\mathfrak{S}}(G/N) = R_{\mathrm{L}\mathfrak{R}}(G/N) = S(G/N).$$
The desired conclusion then follows from Theorem~\ref{prop:main_tool}. If $G$ is non-torsion, Lemma~\ref{lem:nonpernearmodularlatfinabel} implies that the torsion subgroup $T(G)$ is characteristic and finite-by-abelian, and that $G/T(G)$ satisfies
$$R_{\mathrm{L}\mathfrak{S}}(G/T(G)) = R_{\mathrm{L}\mathfrak{R}}(G/T(G)) = S(G/T(G)).$$
Since finite-by-abelian groups are solvable-by-finite, $N = T(G)$ fulfills the claim. If $G$ is torsion, a theorem of Falco \cite{Falco2003} yields a finite normal subgroup $N \trianglelefteq G$ such that the lattice $\mathrm{Sub}(G/N)$ is modular. By Theorem~\ref{theo:modularlatticelocalsolrad}, $G/N$ satisfies the required equalities, which proves the claim.

In either case, such a subgroup $N$ exists. The proof is now complete.
\end{proof}

\begin{corollary}
Let $G$ be a group whose subgroup lattice $\textup{Sub}(G)$ is nearly modular. Then $G$ is $R_{\textup{L}\mathfrak{S}}$-hereditary. Moreover, if $A$ is a locally radical ascendant subgroup of $G$, then its normal closure $\langle A^G \rangle$ is locally solvable.
\end{corollary}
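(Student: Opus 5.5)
The corollary has two assertions, and both reduce to results already proved; the second follows from the first by Lemma~\ref{lem:ascenlocsolsubgroupislocsol}.

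For the first assertion, let $H$ be any subgroup of $G$. By Lemma~\ref{lem:modularpreserve}(1), $\operatorname{Sub}(H)$ is again nearly modular. Theorem~\ref{thm:target_3} applied to $H$ then shows that $H$ admits the locally solvable radical. Since $H$ was arbitrary, $G$ is $R_{\mathrm{L}\mathfrak S}$-hereditary. Nothing more is needed here, because Theorem~\ref{thm:target_3} was proved for an arbitrary group with nearly modular subgroup lattice.

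For the second assertion, let $A$ be an ascendant locally radical subgroup of $G$. To apply Lemma~\ref{lem:ascenlocsolsubgroupislocsol}, we first need $A$ to be locally solvable. The plan is as follows.
\begin{enumerate}[label=\textup{(\roman*)}]
\item By Lemma~\ref{lem:modularpreserve}(1), $\operatorname{Sub}(A)$ is nearly modular, so Theorem~\ref{thm:target_3} applies to $A$ itself.
\item This gives $R_{\mathrm{L}\mathfrak R}(A)=R_{\mathrm{L}\mathfrak S}(A)$.
\item Since $A$ is locally radical, $A$ is a normal locally radical subgroup of itself, so $A=R_{\mathrm{L}\mathfrak R}(A)$.
\item Hence $A=R_{\mathrm{L}\mathfrak S}(A)$, and $A$ is locally solvable.
\end{enumerate}
Lemma~\ref{lem:ascenlocsolsubgroupislocsol}, applied to the $R_{\mathrm{L}\mathfrak S}$-hereditary group $G$ and the ascendant locally solvable subgroup $A$, then shows that $\langle A^G\rangle$ is locally solvable.

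The only point that needs care is the passage from local radicality to local solvability in steps (ii)--(iv). It holds because Theorem~\ref{thm:target_3} identifies the two radicals in every group with nearly modular subgroup lattice, and subgroups of $G$ inherit that lattice property. This is exactly the role Lemma~\ref{lem:locradislocsolonlocsolbyfinite} plays in Corollary~\ref{cor:ascendantlocsolsubgroupwithnorclosislocsol} for locally (solvable-by-finite) groups.
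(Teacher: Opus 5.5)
Your proposal is correct and takes the same route as the paper: Lemma~\ref{lem:modularpreserve}(1) together with Theorem~\ref{thm:target_3} gives the $R_{\mathrm{L}\mathfrak S}$-hereditary property, and Lemma~\ref{lem:ascenlocsolsubgroupislocsol} then gives the normal-closure statement. Your steps (i)--(iv) apply Theorem~\ref{thm:target_3} to $A$ itself to pass from local radicality to local solvability, which makes explicit a step the paper's proof leaves implicit, since Lemma~\ref{lem:ascenlocsolsubgroupislocsol} requires $A$ to be locally solvable.
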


\begin{proof}
By Lemma~\ref{lem:modularpreserve}, the subgroup lattice of every subgroup
of $G$ is nearly modular. Hence Theorem~\ref{thm:target_3} shows that $G$ is
$R_{\mathrm{L}\mathfrak{S}}$-hereditary. Lemma~\ref{lem:ascenlocsolsubgroupislocsol} then implies that the normal closure
$\langle A^G\rangle$ is locally solvable.
\end{proof}

We conclude by asking whether Wilson radical sets depend on the choice of
$\omega\in\Omega$. Outside the classes covered by Theorems~\ref{theo:locally-finite} and \ref{theo:locradislocsol}, it remains
unknown whether $\mathcal W_\omega(G)$ is independent of $\omega$.
Let $T$ be a Tarski monster and let $\omega\in\Omega$. Since $T$ is nonabelian simple, it follows from Lemma~\ref{lem:SsubseteqW} that 
$$\langle\mathcal W_\omega(T)\rangle\in\{1,T\}.$$
The first case is equivalent to $\mathcal W_\omega(T)=\{1\}$, whereas the second only asserts that $\mathcal W_\omega(T)$ generates $T$. This motivates the following problem.

\begin{problem}
\label{quest:tarski_wilson}
Do there exist $\omega_+,\omega_-\in\Omega$ such that, for every Tarski
monster $T$,
  $$\mathcal W_{\omega_+}(T)=T \text{ and } \mathcal W_{\omega_-}(T)=\{1\}?$$
\end{problem}

\end{document}